\def\version{\standardversion}
\def\stocversion{2}
  \def\version{\stocversion}
  \def\final{1}
    \def\version{\overrideversion}
    \setlist{nolistsep}
    \newenvironment{myfigure}[1][]{\begin{SCfigure}}{\end{SCfigure}}
    \newenvironment{myfigure}[1][]{%
        \def\temp{#1}\ifx\temp\empty
            \begin{figure}
        \else
            \begin{figure}[#1]
        \fi
    }{%
        \end{figure}%
    }
\newtheorem{theorem}{Theorem}
\newtheorem{lemma}[theorem]{Lemma}
\newtheorem{definition}[theorem]{Definition}
\newtheorem{claim}{Claim}
\newenvironment{remark}{\paragraph{Remark:}}{}
\newcommand{\email}[1]{\href{mailto:#1}{#1}}
\newenvironment{claimproof}{%
\begin{proof}[Proof of Claim]%
}{%
\begingroup%
\end{proof}%
\endgroup}
\title{The minimum Euclidean-norm point in a convex polytope: Wolfe's combinatorial algorithm is exponential}
\author{ 
Jes{\'u}s De Loera \\
University of California, Davis\\
\email{deloera@math.ucdavis.edu}
\and
Jamie Haddock\\
University of California, Davis\\
\email{jhaddock@math.ucdavis.edu}
\and
Luis Rademacher\\
University of California, Davis\\
\email{lrademac@ucdavis.edu}
}
\date{}
\DeclareMathOperator{\conv}{conv}
\DeclareMathOperator{\aff}{aff}
\DeclareMathOperator{\argmin}{argmin}
\DeclareMathOperator{\R}{\mathbb{R}}
\newenvironment{proofidea}{\noindent{\textit{Proof idea.}}}{\hfill$\square$\medskip}
\newcommand{\proj}{\operatorname{proj}}
\newcommand{\linspan}[1]{\operatorname{span}\left(#1\right)}
\newcommand{\eps}{\epsilon}
\newcommand{\suchthat}{\mathrel{:}}
\newcommand{\RR}{\mathbb{R}}
\newcommand{\QQ}{\mathbb{Q}}
\newcommand{\norm}[1]{{\lVert#1\rVert}}
\newcommand{\norms}[1]{{\lVert#1\rVert}^2}
\newcommand{\enorm}[1]{\norm{#1}_2}
\newcommand{\enorms}[1]{\lVert#1\rVert^2_2}
\newcommand{\lnote}[1]{[{\small Luis: \textbf{#1}}]\marginpar{*}}
\newcommand{\anonnote}[1]{[{\small anon: \textbf{#1}}]\marginpar{*}}
\newcommand{\sidecomment}[1]{\marginpar{\tiny #1}}
\newcommand{\details}[1]{[[#1]]}
\newcommand{\lnote}[1]{}
\newcommand{\anonnote}[1]{}
\newcommand{\sidecomment}[1]{}
\newcommand{\details}[1]{}
\DeclareRobustCommand{\ve}[1]{%
\IfSubStr{#1}{_}{%
    \StrCut{#1}{_}\csA\csB \mathbf{\csA}_\csB%
}{%
    \mathbf{#1}}%
}
\newcommand{\ppoint}[1]{\ve{p_{#1}}}
\newcommand{\qpoint}[1]{\ve{q_{#1}}}
\newcommand{\rpoint}[1]{\ve{r_{#1}}}
\newcommand{\spoint}[1]{\ve{s_{#1}}}
\newcommand{\Pset}[1]{P(#1)}
\newcommand{\Oset}[1]{O({#1})}
\newcommand{\Cset}[1]{C({#1})}
\newcommand{\M}[1]{M_{#1}}
\newcommand{\m}[1]{m_{#1}}
\newcommand{\xstar}[1]{\ve{o_{#1}^{*}}}
\newcommand{\oracle}{\mathcal{O}}
\begin{document}

\maketitle

\begin{abstract}
    The complexity of Philip Wolfe's method for the minimum Euclidean-norm point problem over a convex polytope has remained unknown since he proposed the method in 1974.
    The method is important because it is used as a subroutine for one of the most practical algorithms for submodular function minimization.
    We present the first example that Wolfe's method takes exponential time.
    Additionally, we improve previous results to show that linear programming reduces in strongly-polynomial time to the minimum norm point problem over a simplex.
\end{abstract}

\ifnum\version=\stocversion
    \newpage
\fi

The fundamental algorithmic problem we consider here is: Given a convex polytope $P \subset \mathbb{R}^d$, 
to find the point $\ve{x} \in P$ of minimum Euclidean norm, i.e., the 
\emph{closest point to the origin} or what we call its \emph{minimum norm point} for short.
We assume $P$ is presented as the convex hull of finitely many points
$\ve{p_1}, \ve{p_2}, \dotsc, \ve{p_n}$ (not necessarily in convex position). 
We wish to find 
\ifnum\version=\stocversion
\[
\begin{array}{ll@{}ll}
\text{argmin}& \;\;\;\; \|\ve{x}\|_2 & &
\\\text{subject to} & \;\;\;\;\;\; \ve{x} & = \underset{k=1}{\overset{n}{\sum}} \lambda_k \ve{p_k}, &
\\
&\underset{k=1}{\overset{n}{\sum}} \lambda_k & = 1, \quad
\lambda_k  \ge 0,  \text{ for } k = 1, 2, ..., n. &
\end{array}
\]
\else
\[
\begin{array}{ll@{}ll}
\text{argmin}& \;\;\;\; \|\ve{x}\|_2 & &
\\\text{subject to} & \;\;\;\;\;\; \ve{x} & = \underset{k=1}{\overset{n}{\sum}} \lambda_k \ve{p_k}, &
\\&\underset{k=1}{\overset{n}{\sum}} \lambda_k & = 1, &
\\& \;\;\;\;\; \lambda_k & \ge 0,  \text{ for } k = 1, 2, ..., n. &
\end{array}
\]
\fi

Finding the minimum norm point in a polytope is a basic auxiliary step in several algorithms arising 
in many areas of optimization and machine learning; a subroutine for solving the minimum norm point problem can be used to compute the projection of
an arbitrary point to a polytope (indeed, $\argmin_{\ve{x} \in P} \|\ve{x}-\ve{a}\|_2$ is the same as 
$\ve{a} + \argmin_{\ve{y} \in P-\ve{a}} \|\ve{y}\|_2$). The minimum norm problem additionally appears in combinatorial optimization, 
e.g., for the nearest point problem for transportation polytopes \cite{abbk,jota} and
as a vital subroutine in B\'ar\'any and Onn's approximation algorithm to
solve the colorful linear programming problem \cite{barany}. 
One of the most important reasons to study this problem is because the minimum norm problem can be
used as a subroutine for submodular function minimization through projection onto the base polytope,
as proposed by Fujishige \cite{fujishige80}. 
Submodular minimization is useful in machine learning, where applications such as 
large scale learning and vision require efficient and accurate solutions \cite{naga2011}.
The problem also appears in optimal loading of recursive neural networks \cite{chandru}. 
The Fujishige-Wolfe algorithm is currently considered an important practical algorithm in 
applications \cite{Fujishige+Isotani, fuji2006,chakrabarty}.
Furthermore, Fujishige et al. first observed that linear programs may be solved 
by solving the minimum norm point problem \cite{fuji2006}, so this simple geometric problem 
is also relevant to the theory of algorithmic complexity of linear optimization. 

One may ask about the complexity of other closely related problems.  First, it is worth remembering that
$L_p$ norm minimization over a polyhedron is NP-hard for $0\leq p<1$ (see \cite{Geetal2011} and the references therein), while
for $p\geq 1$ the convexity of the norm allows for fast computation.
One can prove that it is NP-hard to find a closest \emph{vertex} of a convex polytope given by inequalities.
The reduction for hardness is to the \emph{directed Hamiltonian path problem}: Given a directed graph $G = (V, A)$
and two distinct vertices $s, t \in V$, one aims to decide whether  $G$ contains a directed Hamiltonian path from $s$ to $t$. It is well-known there is a polytope represented by inequalities whose vertices correspond to  the characteristic vectors of a directed path joining $s$ to $t$ in $G$. See e.g.,  Proposition 2.6 in the book 
\cite{nemhauserwolsey1988} for the details, including the explicit inequality description of this polytope. Finally,
by a change of variable $y_i = 1- x_i$, changing zeros to ones and vice versa, the minimum Euclidean norm vertex becomes 
precisely the ``longest path from $s$ to $t$", solving the directed Hamiltonian path problem.


Since the Euclidean norm is a convex quadratic form, the minimum norm point problem is a special case of convex quadratic optimization problem. 
Indeed, it is well-known that a convex quadratic programming problem can be approximately solved in polynomial time; 
that is, some point $\ve{y}$ within distance $\varepsilon$ of the desired minimizing point $\ve{x}$ may be found 
in polynomial time with respect to $\log \frac1\varepsilon$. This can be done either through several iterative (convergent)
algorithms, such as the Ellipsoid method \cite{kozlovetal} and interior-point method techniques \cite{boyd2004convex}.
Each of these are methods whose complexity depends upon the desired accuracy.
However, an approximate numerical solution is inconvenient when the application 
requires more information, e.g., if we require to know the face that contains the minimum norm point. 
Numeric methods that converge to a solution and require further rounding are not as convenient for this need. 

In this paper, we focus on combinatorial algorithms that rely on the structure of the polytope.
There are several reasons to study the complexity of combinatorial algorithms for the minimum norm problem. 
On the one hand, the minimum norm problem can indeed be solved in strongly-polynomial time for some polytopes; most notably in network-flow and transportation polytopes (see \cite{jota,abbk,Vegh16}, and references therein, for details).
On the other hand, while linear programming reduces to the minimum norm problem, it is unknown whether linear programming can be solved in strongly-polynomial time \cite{smale2000mathematical}, thus the complexity of the minimum norm point problem could also impact the algorithmic efficiency of linear programming and optimization in general. 
For all these reasons it is natural to ask whether a strongly-polynomial time algorithm exists for the minimum norm problem for general polytopes.


\paragraph{ Our contributions:}
\begin{itemize}
\item  In 1974, Philip Wolfe proposed a combinatorial method that can solve the minimum-norm point problem exactly \cite{wolfe,wolfeprime}. 
Since then, the complexity of Wolfe's method was not understood. In Section 1 we present 
our main contribution and give the first example that Wolfe's method has exponential behavior.
This is akin to the well-known Klee-Minty examples showing exponential behavior 
for the simplex method \cite{klee+minty}. Prior work by \cite{chakrabarty} showed that after $t$ iterations, Wolfe’s method returns an $O(1/t)$-approximate solution to the minimum norm point on any polytope. 

\item As we mentioned earlier, an enticing reason to explore the complexity of the minimum norm problem is its 
intimate link to the complexity of linear programming. It is known that linear programming can be polynomially 
reduced to the minimum norm point problem \cite{Fujishige+Isotani}. 
In Section 2, we strengthen earlier results showing that linear optimization is strongly polynomial time reducible to the minimum norm point problem on a simplex.  

\end{itemize}

\ifnum\version=\stocversion
Proofs omitted from this extended abstract can be found in the full version \cite{1710.02608} and attached at the end of this extended abstract.
\fi






\section{Wolfe's method exhibits exponential behavior}

For convenience of the reader and to set up notation we start with a brief description of Wolfe's method.  We will then describe our exponential example in detail, proving the exponential behavior of Wolfe's method.
First, we review two important definitions.
Given a set of points $S \subseteq \RR^d$, we have two minimum-norm points to consider.  
One is the \emph{affine minimizer} which is the point of minimum norm in the affine hull of $S$, $\argmin_{\ve{x} \in \aff(S)} \|\ve{x}\|_2$.
The second is the \emph{convex minimizer} which is the point of minimum norm in the convex hull of $S$, $\argmin_{\ve{x} \in \conv(S)} \|\ve{x}\|_2$.  Note that solving for the convex minimizer of a set of points is exactly the problem we are solving, while solving for the affine minimizer of a set of points is easily computable.

\subsection{A brief review of Wolfe's combinatorial method}\label{Wolfeintro}


Wolfe's method is a combinatorial method for solving the minimum norm point problem over a polytope, $P = \conv(\ve{p_1}, \ve{p_2}, ..., \ve{p_n}) \subset \mathbb{R}^d$, introduced by P.~Wolfe in \cite{wolfe}. 
The method iteratively solves this problem over a sequence of subsets of no more than $d+1$ affinely independent points from $\ve{p_1}, ..., \ve{p_n}$ and it checks to see if the solution to the subproblem is a solution to the problem over $P$ using the following lemma due to Wolfe. We call this \emph{Wolfe's criterion}.

\begin{lemma}[Wolfe's criterion \cite{wolfe}]\label{lem:mnpcheck}\label{wolfec}
Let $P = \conv(\ve{p_1}, \ve{p_2}, ..., \ve{p_n}) \subset \R^d$, then $\ve{x} \in P$ is the minimum norm point in $P$ if and only if 
\ifnum\version=\stocversion
$\ve{x}^T \ve{p_j} \ge \|\ve{x}\|_2^2$ for all  $j \in [n]$.
\else
\[
\ve{x}^T \ve{p_j} \ge \|\ve{x}\|_2^2 \quad \text{for all} \quad j \in [n].
\]
\fi
\end{lemma}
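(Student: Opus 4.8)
The plan is to prove both directions from the first-order optimality condition for minimizing the strictly convex function $\ve{x} \mapsto \|\ve{x}\|_2^2$ over $P$, exploiting the fact that the inner product is linear in convex combinations, so that a condition imposed only on the generators $\ve{p_j}$ propagates to all of $P$.

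For the forward direction (necessity), I would suppose $\ve{x}$ is the minimum norm point and fix an arbitrary generator $\ve{p_j}$. Since $P$ is convex, the segment $\ve{y}(t) = \ve{x} + t(\ve{p_j} - \ve{x})$ lies in $P$ for $t \in [0,1]$. Because $\ve{x}$ minimizes the norm over $P$, the scalar function $t \mapsto \|\ve{y}(t)\|_2^2$ attains its minimum on $[0,1]$ at $t = 0$, so its right derivative there is nonnegative. Computing $\frac{d}{dt}\|\ve{y}(t)\|_2^2\big|_{t=0} = 2\ve{x}^T(\ve{p_j}-\ve{x})$ and requiring it to be $\ge 0$ yields exactly $\ve{x}^T\ve{p_j} \ge \|\ve{x}\|_2^2$, and since $\ve{p_j}$ was arbitrary this gives all $n$ inequalities.

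For the backward direction (sufficiency), I would assume the inequalities hold for every generator and show $\ve{x}$ beats any competitor $\ve{y} \in P$. Writing $\ve{y} = \sum_k \lambda_k \ve{p_k}$ with $\lambda_k \ge 0$ and $\sum_k \lambda_k = 1$, linearity of the inner product gives $\ve{x}^T\ve{y} = \sum_k \lambda_k \ve{x}^T\ve{p_k} \ge (\sum_k \lambda_k)\|\ve{x}\|_2^2 = \|\ve{x}\|_2^2$, so the hypothesis on the generators lifts to $\ve{x}^T(\ve{y}-\ve{x}) \ge 0$ for all $\ve{y} \in P$. Expanding $\|\ve{y}\|_2^2 = \|\ve{x}\|_2^2 + 2\ve{x}^T(\ve{y}-\ve{x}) + \|\ve{y}-\ve{x}\|_2^2 \ge \|\ve{x}\|_2^2$ then shows $\ve{x}$ has minimum norm (and, since equality forces $\ve{y}=\ve{x}$, that it is the unique such point).

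The argument has no serious obstacle; it is the standard variational characterization of the projection of the origin onto a convex set. The only point requiring care is the reduction between the generators and all of $P$: the criterion is stated over just the finitely many $\ve{p_j}$, so one must verify that the necessity restriction (each $\ve{p_j} \in P$) and the sufficiency lift (inner product linear in the convex weights) correctly bridge the two, which is precisely what makes checking the condition only at the generators suffice.
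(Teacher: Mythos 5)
Your proof is correct: both directions are the standard variational characterization of the projection of the origin onto a convex set, and you handle the only delicate point (bridging between the finitely many generators and all of $P$) properly in both directions. Note that the paper itself gives no proof of this lemma---it cites Wolfe \cite{wolfe}---but your argument is essentially the same technique the paper uses to prove its affine-hull analogue (\cref{lem:affineoptimality}): the expansion $\enorms{\ve{y}} = \enorms{\ve{x}} + 2\ve{x}^T(\ve{y}-\ve{x}) + \enorms{\ve{y}-\ve{x}}$ for sufficiency, and a perturbation along the segment toward $\ve{p_j}$ for necessity (your one-sided derivative at $t=0$ is just the limiting form of the paper's explicit choice of small $\epsilon$).
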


Note that this tells us that if there exists a point $p_j$ so that $x^Tp_j < \enorms{x}$ then $x$ is not the minimum norm point in $P$.  
We say that $p_j$ violates Wolfe's criterion and using this point should decrease the minimum norm point of the current subproblem.

It should be observed that just as Wolfe's criterion is a rule to decide optimality over $\conv(P)$, one has a very similar
rule for deciding optimality over the affine hull, $\aff(P)$. 
\ifnum\version=\stocversion
We state and prove this result below since we do not know of a reference.
\fi


\begin{lemma}[Wolfe's criterion for the affine hull]\label{lem:affineoptimality}
Let $P = \{\ve{p_1},\ve{p_2},...,\ve{p_n}\} \subseteq \RR^d$ be a non-empty finite set of points. Then $\ve{x} \in \aff P$ is the minimum norm point in $\aff P$ iff for all $\ve{p_i} \in P$ we have $\ve{p_i}^T \ve{x} = \enorms{\ve{x}}$.
\end{lemma}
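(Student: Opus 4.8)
The plan is to exploit the fact that $\aff P$ is an affine subspace, hence a closed convex set with empty relative boundary: minimizing the squared norm over it is an unconstrained quadratic minimization along every direction parallel to the subspace. Consequently the optimality condition is an exact orthogonality (equality) condition rather than the inequality appearing in Wolfe's convex criterion (Lemma~\ref{wolfec}). Concretely, I would first record the elementary fact that for any two points $\ve{x}, \ve{y} \in \aff P$ the entire line $\{\ve{x} + t(\ve{y}-\ve{x}) \suchthat t \in \RR\}$ lies in $\aff P$, since affine hulls are closed under affine combinations; this is what lets me vary $t$ over all of $\RR$, not just $t \ge 0$.

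For the forward direction, suppose $\ve{x}$ is the minimum norm point of $\aff P$. Fix any $\ve{p_i} \in P$ and consider $g(t) = \enorms{\ve{x} + t(\ve{p_i} - \ve{x})}$, which by the previous remark is minimized at $t = 0$ over all $t \in \RR$. Differentiating and setting $g'(0) = 0$ gives $\ve{x}^T(\ve{p_i} - \ve{x}) = 0$, i.e. $\ve{p_i}^T \ve{x} = \enorms{\ve{x}}$, the claimed identity.

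For the reverse direction, suppose $\ve{p_i}^T \ve{x} = \enorms{\ve{x}}$ for every $\ve{p_i} \in P$, with $\ve{x} \in \aff P$. Take an arbitrary $\ve{y} \in \aff P$ and write $\ve{y} = \sum_i \mu_i \ve{p_i}$ with $\sum_i \mu_i = 1$. Then $\ve{x}^T \ve{y} = \sum_i \mu_i (\ve{p_i}^T \ve{x}) = \left(\sum_i \mu_i\right)\enorms{\ve{x}} = \enorms{\ve{x}}$, so the identity extends from the generators to all of $\aff P$. Expanding $\enorms{\ve{y}} = \enorms{(\ve{y} - \ve{x}) + \ve{x}} = \enorms{\ve{y} - \ve{x}} + 2\,\ve{x}^T(\ve{y} - \ve{x}) + \enorms{\ve{x}}$ and using $\ve{x}^T \ve{y} = \enorms{\ve{x}}$ to cancel the cross term yields $\enorms{\ve{y}} = \enorms{\ve{y} - \ve{x}} + \enorms{\ve{x}} \ge \enorms{\ve{x}}$, with equality only when $\ve{y} = \ve{x}$. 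Hence $\ve{x}$ is the (unique) minimum norm point of $\aff P$.

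I do not expect a genuine obstacle here; the only point requiring care is the bookkeeping that turns the common value of the inner products $\ve{p_i}^T \ve{x}$ into exactly $\enorms{\ve{x}}$, which is precisely where the affine-combination constraint $\sum_i \mu_i = 1$ is used. Existence and uniqueness of the minimizer are the standard facts about projection onto a nonempty closed convex set, so I would invoke them rather than reprove them.
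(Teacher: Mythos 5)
Your proposal is correct and takes essentially the same approach as the paper: your ($\Leftarrow$) direction is the paper's argument verbatim (extend $\ve{p_i}^T\ve{x} = \enorms{\ve{x}}$ from the generators to an arbitrary affine combination, then expand $\enorms{\ve{y}-\ve{x}}$ to cancel the cross term), and your ($\Rightarrow$) direction is the paper's perturbation along the line through $\ve{x}$ and $\ve{p_i}$, merely repackaged as the first-order condition $g'(0)=0$ for the quadratic $g(t)=\enorms{\ve{x}+t(\ve{p_i}-\ve{x})}$. The only cosmetic difference is that the derivative formulation spares you the paper's explicit choice of $\epsilon$ and its two-case analysis on the sign of $\ve{x}^T(\ve{p_i}-\ve{x})$.
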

\begin{proof}
($\Leftarrow$) Let $\ve{p}= \sum_{i=1}^n \rho_i \ve{p_i}$ with $\sum_{i=1}^n \rho_i = 1$ be an arbitrary point in $\aff P$ and suppose $\ve{p_i}^T \ve{x} = \enorms{\ve{x}}$ for $i=1,2,...,n$.  
We have 
\[
\ve{p}^T \ve{x} = \underset{i=1}{\overset{n}{\sum}} \rho_i \ve{p_i}^T \ve{x} = \underset{i=1}{\overset{n}{\sum}}\rho_i\enorms{\ve{x}} = \enorms{\ve{x}}.
\]
Then $0 \le \enorms{\ve{p}-\ve{x}} = \enorms{\ve{p}} - 2\ve{p}^T \ve{x} + \enorms{\ve{x}} = \enorms{\ve{p}} - \enorms{\ve{x}}$ and so $\enorms{\ve{x}} \le \enorms{\ve{p}}$.

($\Rightarrow$) Suppose $\ve{x} \in \aff P$ is the minimum norm point in $\aff P$.  Suppose that $\ve{x}^T(\ve{p_i} - \ve{x}) \not= 0$ for some $i \in [n]$.  First, consider the case when $\ve{x}^T(\ve{p_i} - \ve{x}) > 0$ and define $0 < \epsilon < \frac{2\ve{x}^T(\ve{p_i}-\ve{x})}{\enorms{\ve{p_i} - \ve{x}}}.$  Then we have 
\[
\enorms{(1+\epsilon)\ve{x} - \epsilon \ve{p_i}} = \enorms{x + \epsilon(x - p_i)} = \enorms{\ve{x}} - 2\epsilon \ve{x}^T(\ve{p_i}-\ve{x}) +\epsilon^2 \enorms{\ve{p_i}-\ve{x}} < \enorms{\ve{x}}
\]  
since $0 < \epsilon^2 \enorms{\ve p_i - \ve x} < 2\epsilon x^T(\ve p_i - \ve x)$.  This contradicts our assumption that $\ve{x}$ is the minimum norm point in $\aff P$.  The case when $\ve{x}^T(\ve{p_i} -\ve{x}) < 0$ is likewise proved by considering $\enorms{(1-\epsilon)\ve{x} + \epsilon \ve{p_i}}$ with $0< \epsilon < -\frac{2\ve{x}^T(\ve{p_i}-\ve{x})}{\enorms{\ve{p_i} - \ve{x}}}$.  Thus, we have that $\ve{x}^T(\ve{p_i} - \ve{x}) = 0$.
\end{proof}

We say a set of affinely independent points $S$ is a \emph{corral} if the affine minimizer of $S$ 
lies in the relative interior of $\conv{S}$.
Note that singletons are always corrals. 
Carath{\'e}odory's theorem implies that the minimum norm point of $P$ will lie in the convex hull of some corral of points among $\ve{p_1},...,\ve{p_n}$.  
The goal of Wolfe's method is to search for a corral containing the (unique) minimizing point.

The pseudo-code in Method \ref{alg:wolfe} below presents the iterations of Wolfe's method. It is worth noticing that some steps of the method can be implemented in more than one way and Wolfe proved that all of them lead to a correct algorithm (for example, the choice of the initial point in line \ref{initialrule}). We therefore use the word \emph{method} to encompass all these variations and we discuss specific choices when they are relevant to our analysis of the method.

\begin{algorithm}
\floatname{algorithm}{Method}
\caption{Wolfe's Method \cite{wolfe}}\label{alg:wolfe}
\begin{algorithmic}[1]
\Procedure{Wolfe}{$\ve{p_1}, \ve{p_2}, ..., \ve{p_n}$}
\State Initialize $\ve{x} = \ve{p_i}$ for some $i \in [n]$, initial corral $C = \{\ve{p_i}\}$, $I=\{i\}$, $\ve{\lambda} = \ve{e_i}$, $\ve{\alpha} = \ve{0}$.\label{initialrule}
\While{$\ve{x} \not= \ve{0}$ and there exists $\ve{p_j}$ with $\ve{x}^T\ve{p_j} < \|\ve{x}\|_2^2$}\label{stoppingcriterion}
\State Add $\ve{p_j}$ to the potential corral: $C = C \cup \{\ve{p_j}\}$, $I = I \cup \{j\}$. \label{addrule}
\State Find the affine minimizer of $C$, $\ve{y} = \argmin_{\ve{y} \in \aff(C)} \|\ve{y}\|_2$, and the affine coefficients, $\alpha$.
\While{$\ve{y}$ is not a strict convex combination of points in $C$; $\alpha_i \le 0$ for some $i \in I$}
\State Find $\ve{z}$, closest point to $\ve{y}$ on $[\ve{x},\ve{y}] \cap \conv(C)$; $\ve{z} = \theta \ve{y} + (1-\theta)\ve{x}$,
$\theta = \min_{i \in I : \alpha_i \le 0} \frac{\lambda_i}{\lambda_i - \alpha_i}$.
\State Select $\ve{p_i} \in \{\ve{p_j} \in C : \theta \alpha_j + (1-\theta)\lambda_j = 0\}$.
\State Remove this point from $C$; $C = C - \{\ve{p_i}\}$, $I = I - \{i\}$, $\alpha_i = 0$, $\lambda_i = 0$.
\State Update $\ve{x} = \ve{z}$ and the convex coefficients, $\lambda$, of $\ve{x}$ for $C$; solve $\ve{x} = \sum_{\ve{p_i} \in C} \lambda_i \ve{p_i}$ for $\lambda$.
\State Find the affine minimizer of $C$, $\ve{y} = \argmin_{\ve{y} \in \aff(C)} \|\ve{y}\|_2$ and the affine coefficients, $\alpha$.
\EndWhile
\State Update $\ve{x} = \ve{y}$ and $\lambda = \alpha$.
\EndWhile
\State \textbf{Return} $\ve{x}$.
\EndProcedure
\end{algorithmic}
\end{algorithm}


The subset of points being considered as the \emph{potential corral} is maintained in the set $C$.  Iterations of the outer-loop, where points are added to $C$, are called \emph{major cycles} and iterations of the inner-loop, where points are removed from $C$, are called \emph{minor cycles}.  
The potential corral, $C$, is named so because at the beginning of a major cycle it is guaranteed to be a corral, while within the minor cycles it may or may not be a corral.  
Intuitively, a major cycle of Wolfe's method inserts an \emph{improving point} 
which violates 
Wolfe's criterion ($\ve{p_j}$ so that $\ve{x}^T \ve{p_j} < \|\ve{x}\|_2^2$) into $C$, then the minor cycles remove points until $C$ is a corral, and this process is repeated until no points are improving and $C$ is guaranteed to be a corral containing the minimizer.

It can be shown that this method terminates because the norm of the convex minimizer of the corrals visited monotonically decreases and thus, no corral is visited twice \cite{wolfe}. 
\ifnum\version=\stocversion
\else
Like \cite{chakrabarty}, we sketch the argument in \cite{wolfe}.  
One may see that the norm monotonically decreases by noting that the convex minimizer over the polytope may result from one of two updates to $\ve{x}$, either at the end of a major cycle or at the end of a minor cycle.  
Let $C$ be the corral at the beginning of a major cycle (line 3 of Method \ref{alg:wolfe}) and let $\ve{x}$ be the current minimizer, then the affine minimizer $\ve{y}$ has norm strictly less than that of $\ve{x}$ by Lemma \ref{lem:affineoptimality}, uniqueness of the affine minimizer and the fact that $\ve{p_i}^T\ve{x} < \enorms{\ve{x}}$ where $\ve{p_i}$ is the added point.  
Now, either $\ve{x}$ is updated to $\ve{y}$ or a minor cycle begins.  
Let $S$ be the potential corral at the beginning of a minor cycle (line 6 of \ref{alg:wolfe}), let $\ve{x}$ be the current convex combination of points of $S$ and let $\ve{y}$ be the affine minimizer of $S$.  
Note that $\ve{z}$ is a proper convex combination of $\ve{x}$ and $\ve{y}$ and since $\enorm{\ve{y}} < \enorm{\ve{x}}$, we have $\enorm{\ve{z}} < \enorm{\ve{x}}$.  
Thus, we see that every update of $\ve{x}$ decreases its norm.  
Note that the number of minor cycles within any major cycle is bounded by $d+1$, where $d$ is the dimension of the space.  
Thus, the total number of iterations is bounded by the number of corrals visited multiplied by $d+1$.  
\fi
It is nevertheless not clear how the number of corrals grows, beyond the bound of $\sum_{i=1}^{d+1} \binom ni$.

Within the method, there are two moments at which one may choose which points to add to the potential corral.  
Observe that at line \ref{initialrule} of the pseudocode, one may choose which initial point to add to the potential corral.  
In this paper we will only consider one \emph{initial rule}, which is to initialize with the point of minimum norm. 
Observe that at line \ref{addrule} of the pseudocode, there are several potential choices of which point to add to the potential corral. 
Two important examples of \emph{insertion rules} are, first, the \emph{minnorm rule} which dictates that one chooses, out of the improving points for the potential corral, to add the point $\ve{p_j}$ of minimum norm.  
Second, the \emph{linopt rule} dictates that one chooses, out of the improving points for the potential corral, to add the point $\ve{p_j}$ minimizing $\ve{x}^T \ve{p_j}$. Notice that insertion rules are to Wolfe's method what \emph{pivot rules} are to the Simplex Method (see \cite{Terlaky+Shuzhong1993} for a summary). 
\lnote{in view of my footnote later, a discussion of removal rule may be relevant here.}

As with pivot rules, there are advantages and disadvantages of insertion rules. 
For example, the minnorm rule has the advantage that its implementation only requires an initial ordering of the points, then in each iteration it need only to search for an improving point in order of increasing norm and to add the first found.  
However, the linopt insertion rule has the advantage that, if the polytope is given in H-representation (intersection of halfspaces) rather than V-representation (convex hull of points), one may still perform Wolfe's method by using linear programming to find $\ve{p_j}$ minimizing $ \ve{x}^T \ve{p_j}$ over the polytope. 
In other words, Wolfe's method does not need to have the list of vertices explicitly given, but suffices to have a linear programming oracle that provides the new vertex to be inserted.
This feature of Wolfe's method means that each iteration can be implemented efficiently even for certain polyhedra having too many vertices and facets: specifically, over zonotopes (presented as a Minkowski sum of segments) \cite{fuji2006} and over the base polyhedron of a submodular function \cite{fujishige80}.

\ifnum\version=\stocversion
We present examples that show that the optimal choice of insertion rule depends on the input data. 
In the full version \cite{1710.02608} (and appended at the end here) of this extended abstract we present a simple example where the minnorm rule outperforms the linopt rule.
That is, the minnorm pivot rule is not in obvious disadvantage to the linopt rule.
In \cref{sec:explowerbound}, we present a family of examples where the minnorm rule takes exponential time, while we expect the linopt rule to take polynomial time in this family.
\else
Now we present examples that show that the optimal choice of insertion rule depends on the input data. 
We first present a simple example where the minnorm rule outperforms the linopt rule.
That is, the minnorm insertion rule is not in obvious disadvantage to the linopt rule.
In \cref{sec:explowerbound}, we present a family of examples where the minnorm rule takes exponential time, while we expect the linopt rule to take polynomial time in this family.


\begin{myfigure}[ht]
\centering
		\includegraphics[width=.4\columnwidth]{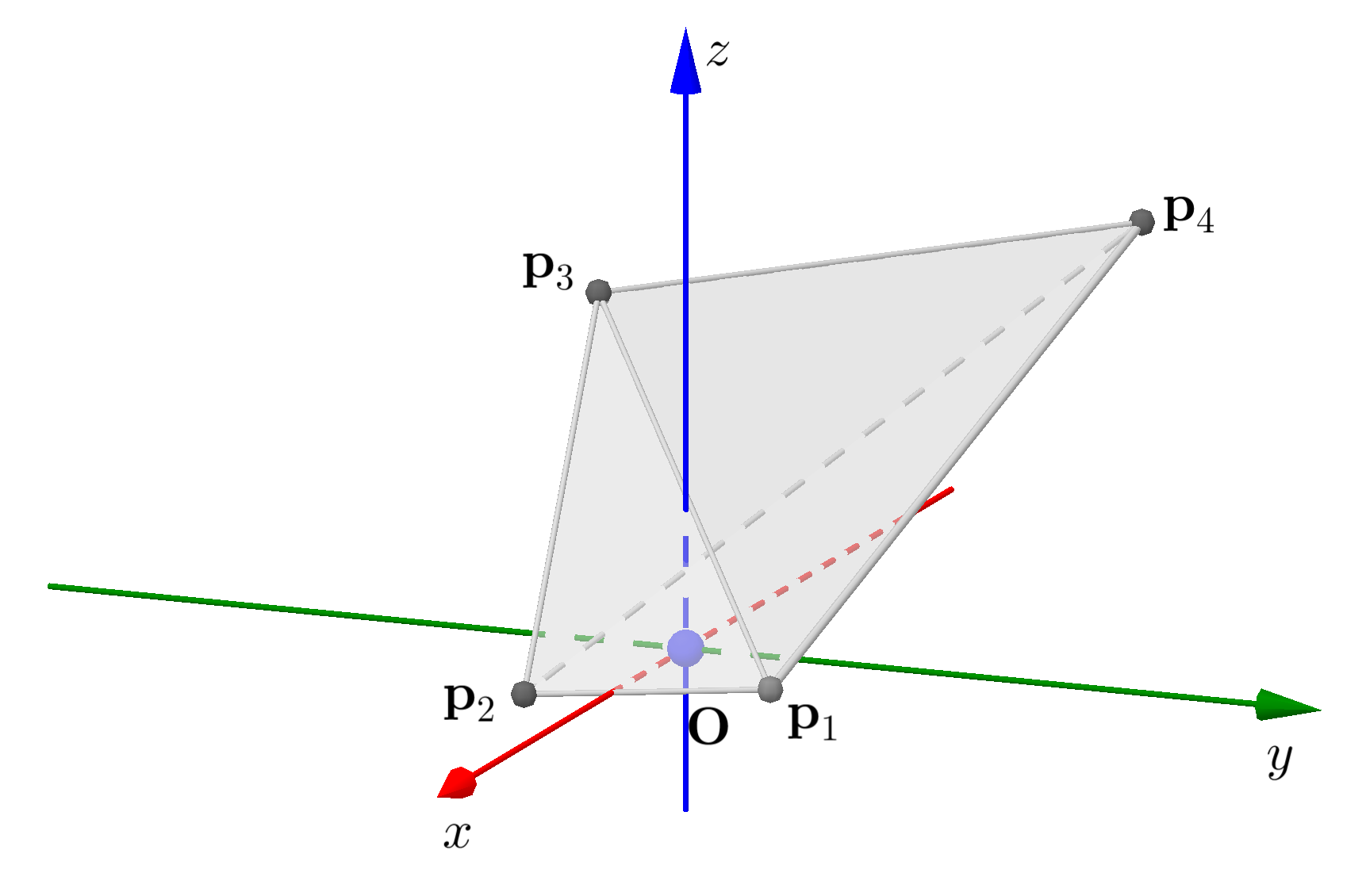}
	\caption{The simplex $P = \conv\{\ve{p_1},\ve{p_2},\ve{p_3},\ve{p_4}\} \subset \mathbb{R}^3$ where $\ve{p_1} = (0.8,0.9,0), \ve{p_2} = (1.5,-0.5,0), \ve{p_3} = (-1,-1,2)$ and $\ve{p_4} = (-4,1.5,2)$.}
	\label{ex:1}
\end{myfigure}

The first example shows that instances can have different performance depending on the choice of insertion rule.  Consider the simplex $P$ shown in Figure \ref{ex:1} (we present the coordinates of vertices in the figure's caption). We list the steps of Wolfe's method on $P$ for the minnorm and linopt insertion rules in Tables \ref{ex:1minnormsteps} and \ref{ex:1linoptsteps} and demonstrate a single step from each set of iterations in Figure \ref{ex:1steps}.  Each row lists major cycle and minor cycle iteration number, the vertices in the potential corral, and the value of $\ve{x}$ and $\ve{y}$ at the end of the iteration (before $\ve{x} = \ve{y}$ for major cycles).  Note that the vertex $\ve{p_4}$ is added to the potential corral twice with the linopt insertion rule, as evidenced in Table \ref{ex:1linoptsteps}.

\begin{table}
\centering
\begin{small}
\begin{tabular}{c c c c c}
	\toprule
	Major Cycle & Minor Cycle & $C$ & $\ve{x}$ & $\ve{y}$ \\ \midrule \midrule
	0 & 0 & $\{\ve{p_1}\}$ & $\ve{p_1}$ & \\ \midrule
	1 & 0 & $\{\ve{p_1},\ve{p_2}\}$ & $\ve{p_1}$ & $(1, 0.5,0)$ \\ \midrule
	2 & 0 & $\{\ve{p_1},\ve{p_2},\ve{p_3}\}$ & $(1,0.5,0)$ & $(0.3980,0.199,0.5473)$ \\ \midrule
	3 & 0 & $\{\ve{p_1},\ve{p_2},\ve{p_3},\ve{p_4}\}$ & $(0.3980,0.199,0.5473)$ & $(0,0,0)$ \\ \midrule
	3 & 1 & $\{\ve{p_1},\ve{p_2},\ve{p_4}\}$ & $(0.2878,0.1439,0.3957)$ & $(0.1980,0.0990,0.4455)$ \\ \bottomrule
\end{tabular}
\caption{iterations for \emph{minnorm} insertion rule}
\label{ex:1minnormsteps}
\end{small}
\end{table}

\begin{table}
\centering
\begin{small}
	\begin{tabular}{c c c c c}
		\toprule
		Major Cycle & Minor Cycle & $C$ & $\ve{x}$ & $\ve{y}$ \\ \midrule \midrule
		0 & 0 & $\{\ve{p_1}\}$ & $\ve{p_1}$ & \\ \midrule
		1 & 0 & $\{\ve{p_1},\ve{p_4}\}$ & $\ve{p_1}$ & $(0.2219,0.9723,0.2409)$ \\ \midrule
		2 & 0 & $\{\ve{p_1}, \ve{p_4},\ve{p_3}\}$ & $(0.2219,0.9723,0.2409)$ & $(0.2848,0.3417,0.5810)$ \\ \midrule
		2 & 1 & $\{\ve{p_1}, \ve{p_3}\}$ & $(0.2835, 0.3548,0.5739)$ & $(0.2774,0.3484,0.5807)$ \\ \midrule
		3 & 0 & $\{\ve{p_1},\ve{p_3},\ve{p_2}\}$ & $(0.2774,0.3484,0.5807)$ & $(0.3980,0.199,0.5473)$ \\ \midrule
		4 & 0 & $\{\ve{p_1},\ve{p_2},\ve{p_3},\ve{p_4}\}$ & $(0.3980,0.199,0.5473)$ & $(0,0,0)$ \\ \midrule
		4 & 1 & $\{\ve{p_1},\ve{p_2},\ve{p_4}\}$ & $(0.2878,0.1439,0.3957)$ & $(0.1980,0.0990,0.4455)$ \\ \bottomrule
	\end{tabular}
	\caption{iterations for \emph{linopt} insertion rule}
	\label{ex:1linoptsteps}
\end{small}
\end{table}

\begin{figure}[ht]
 \includegraphics[width=3in]{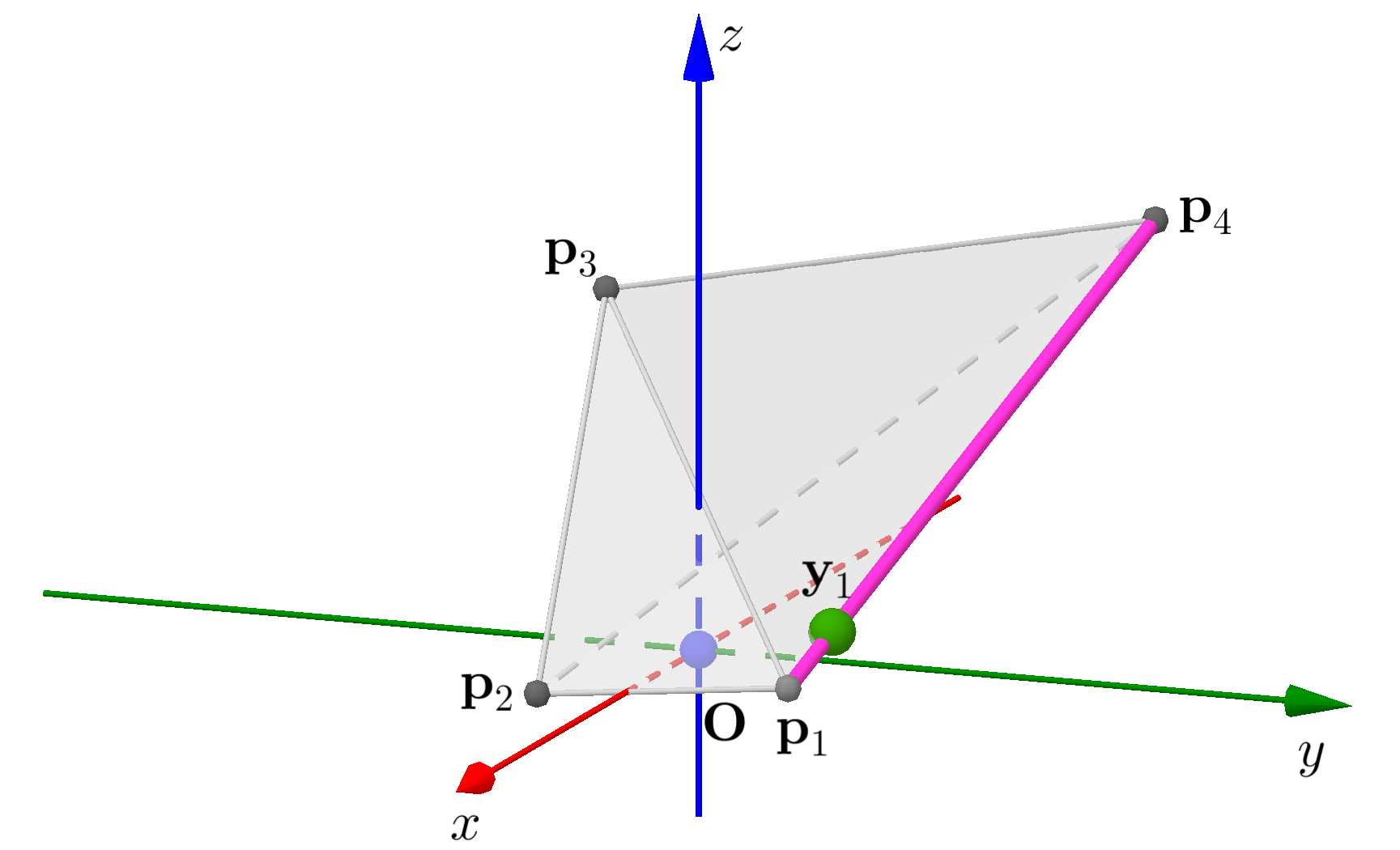} \includegraphics[width=3in]{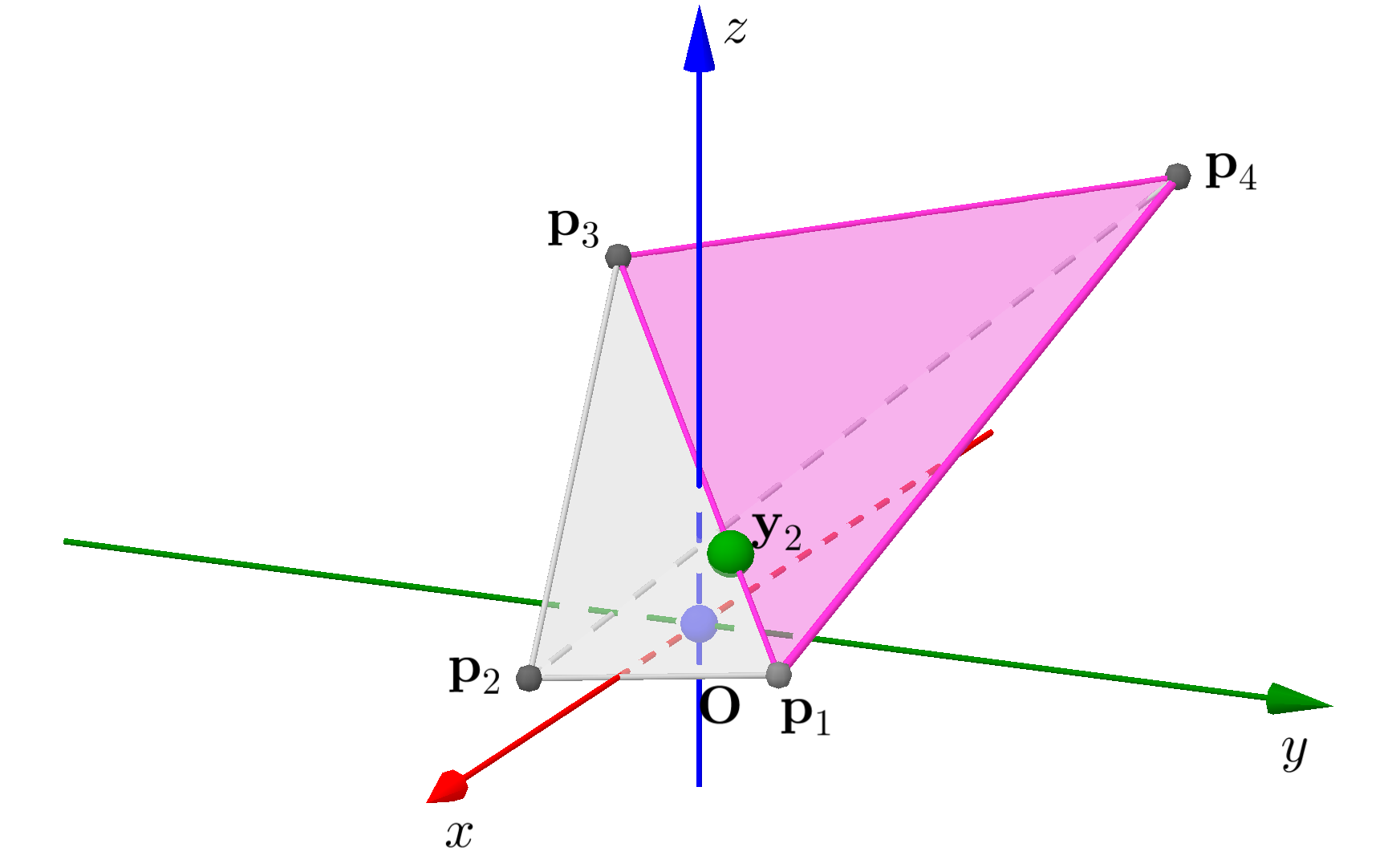}
 \caption{Left: Major cycle 1, minor cycle 0 for the linopt rule on $P$ illustrates the end of a major cycle; the affine minimizer $\ve{y_1} \in \text{relint}(\conv\{C\}) = \text{relint}(\conv\{\ve{p_1},\ve{p_4}\})$.  Right: Major cycle 2, minor cycle 0 for the linopt rule on $P$ illustrates the beginning of a minor cycle; the affine minimizer $\ve{y_2} \not\in \text{relint}(\conv\{C\}) = \text{relint}(\conv\{\ve{p_1},\ve{p_4},\ve{p_3}\})$ and the vertex $\ve{p_4}$ will be removed in the next minor cycle.}
 \label{ex:1steps} 
\end{figure}

\fi

Currently, there are examples of exponential behavior for the simplex method for all known deterministic pivot rules. 
It is our aim to provide the same for insertion rules on Wolfe's method.
In the next subsection we will present the first  exponential-time example using the minnorm insertion rule.

\subsection{An exponential lower bound for Wolfe's method}\label{sec:explowerbound}

To understand our hard instance, it is helpful to consider first a simple instance that shows an inefficiency of Wolfe's method. 
The example is a set of points where a point leaves and reenters the current corral: 
4 points in $\RR^3$, $(1,0,0), (1/2,1/4,1), (1/2,1/4,-1), (-2,1/4,0)$. 
If one labels the points $1,2,3,4$, the sequence of corrals with the minnorm rule is $1,12,23,234,14$, where point $1$ enters, leaves and reenters (For succintness, sets of points like $\{a,b,c\}$ may be denoted $abc$.). 
The idea now is to recursively replace point 1 (that reenters) in this construction by a recursively constructed set of points whose corrals are then considered twice by Wolfe's method. 
To simplify the proof, our construction uses a variation of this set of 4 points with an additional point and modified coordinates. This modified construction is depicted in \cref{fig:pq}, where point 1 corresponds to a set of points $\Pset{d-2}$, points 2,3 correspond to points $\ppoint{d}, \qpoint{d}$ and point 4 corresponds to points $\rpoint{d}, \spoint{d}$.

The high-level idea of our exponential lower bound example is the following.
We will inductively define a sequence of instances of increasing dimension of the minimum norm point problem. 
Given an instance in dimension $d-2$, we will add a few dimensions and points so that, when given to Wolfe's method, the number of corrals of the new augmented instance in dimension $d$ has about twice the number of corrals of the 
input instance in dimension $d-2$. More precisely, our augmentation procedure takes an instance $\Pset{d-2}$ in $\RR^{d-2}$, adds two new coordinates and adds four points, $\ppoint{d}, \qpoint{d}, \rpoint{d}, \spoint{d}$, to get an instance $\Pset{d}$ in $\RR^d$. 

Points $\ppoint{d}, \qpoint{d}$ are defined so that the method on instance $\Pset{d}$ goes first through every corral given by the points in the prior configuration $\Pset{d-2}$ and then goes to corral $\ppoint{d} \qpoint{d}$. 
To achieve this under the minimum norm rule, the four new points have greater norm than any point in $\Pset{d-2}$ and they are in the geometric configuration sketched in \cref{fig:pq}.

\begin{figure}[ht]
\centering
\includegraphics[width=.44\columnwidth]{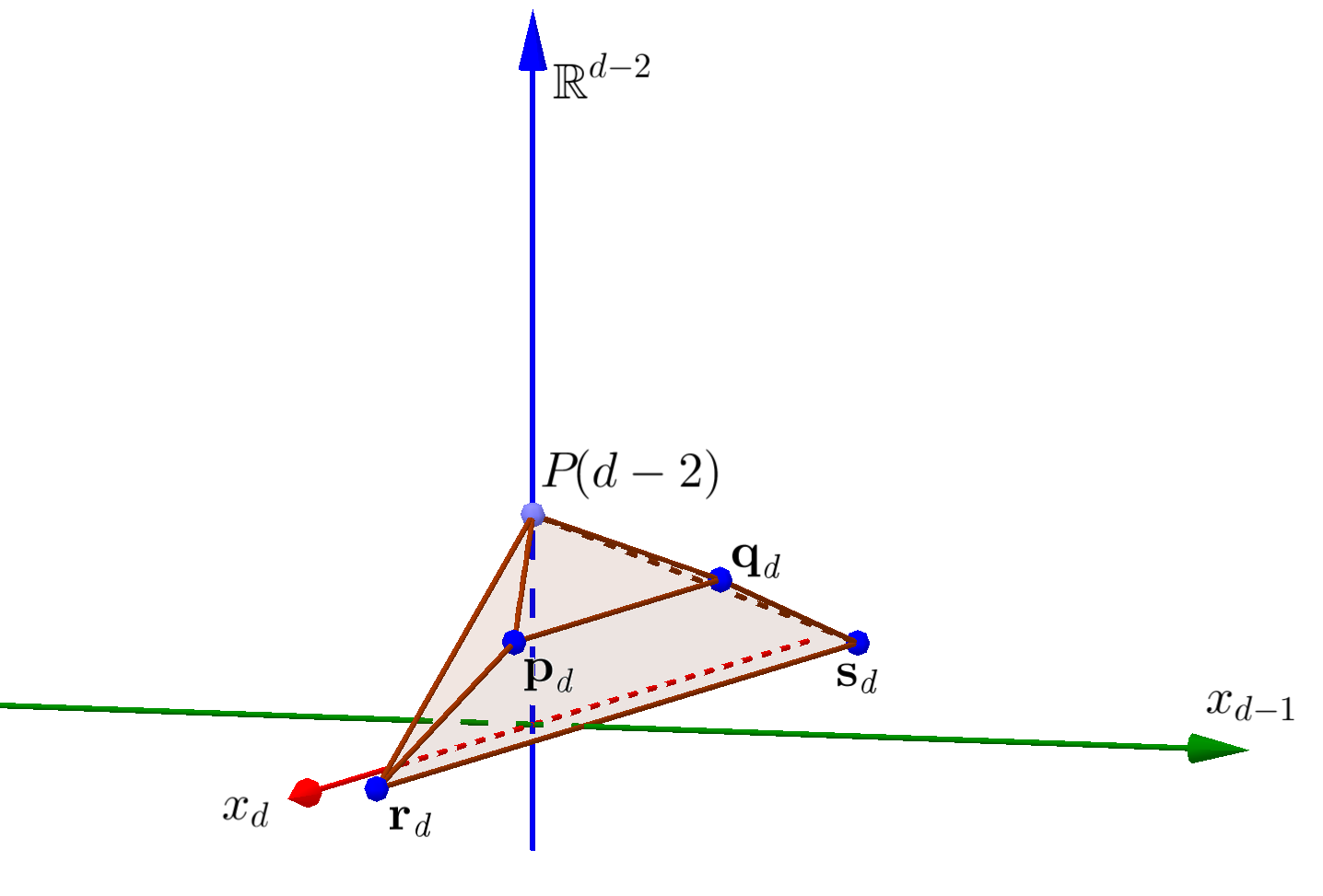}
\includegraphics[width=.46\columnwidth, trim=0 1in 0 0, clip]{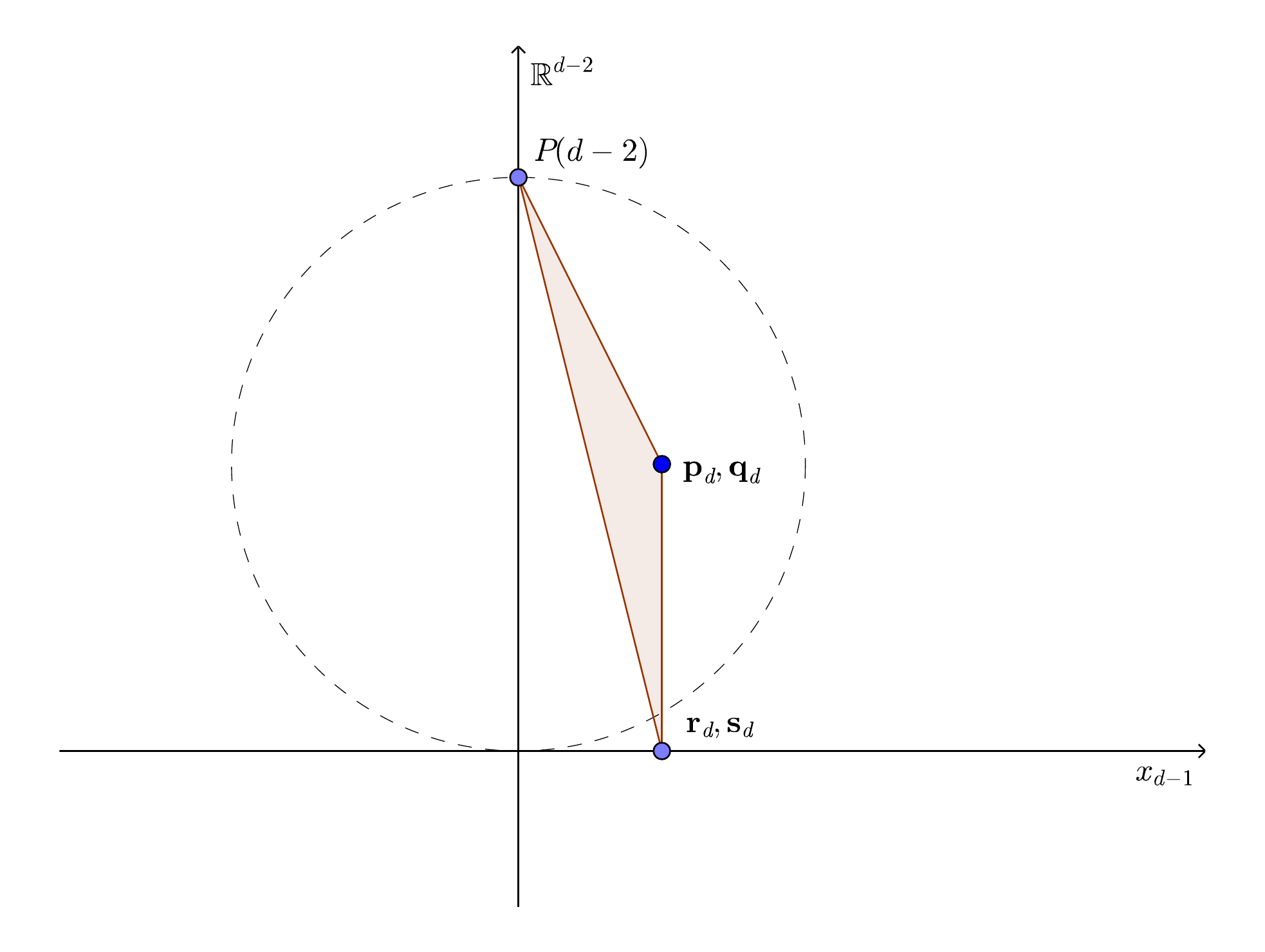}
\caption{Left: In this view of $\Pset{d}$, the point labeled $\Pset{d-2}$ represents all points from $\Pset{d-2}$ embedded into $\RR^d$.  The axis labeled $\RR^{d-2}$ represents the $(d-2)$-dimensional subspace, $\linspan{\Pset{d-2}}$ projected into $\linspan{\xstar{d-2}}$.  Right: A two-dimensional view of $\Pset{d}$ projected along the $x_d$ coordinate axis.}\label{fig:pq}
\end{figure}

At this time, no point in $\Pset{d-2}$ is in the current corral and so, if a point in $\Pset{d-2}$ is part of the optimal corral, 
it will have to reenter, which is expensive. Points $\rpoint{d}, \spoint{d}$ are defined so that $\rpoint{d} \spoint{d}$ is a corral after 
$\ppoint{d} \qpoint{d}$, but now every point in $\Pset{d-2}$ is improving according to Wolfe's criterion and may enter again.
Specifically, every corral in $\Pset{d-2}$, with $\rpoint{d} \spoint{d}$ appended, is visited again.

Before we start describing the exponential example in detail, we wish to review preliminary lemmas of independent interest which will be used in the arguments. The first lemma demonstrates that orthogonality between finite point sets allows us to easily describe the affine minimizer of their union.  Figure \ref{fig:affinemins} shows two such situations, one in which the affine hull of the union of the point sets span all of $\RR^3$ and one in which it does not.

\begin{figure}[ht]
\centering
\includegraphics[width=2.6in]{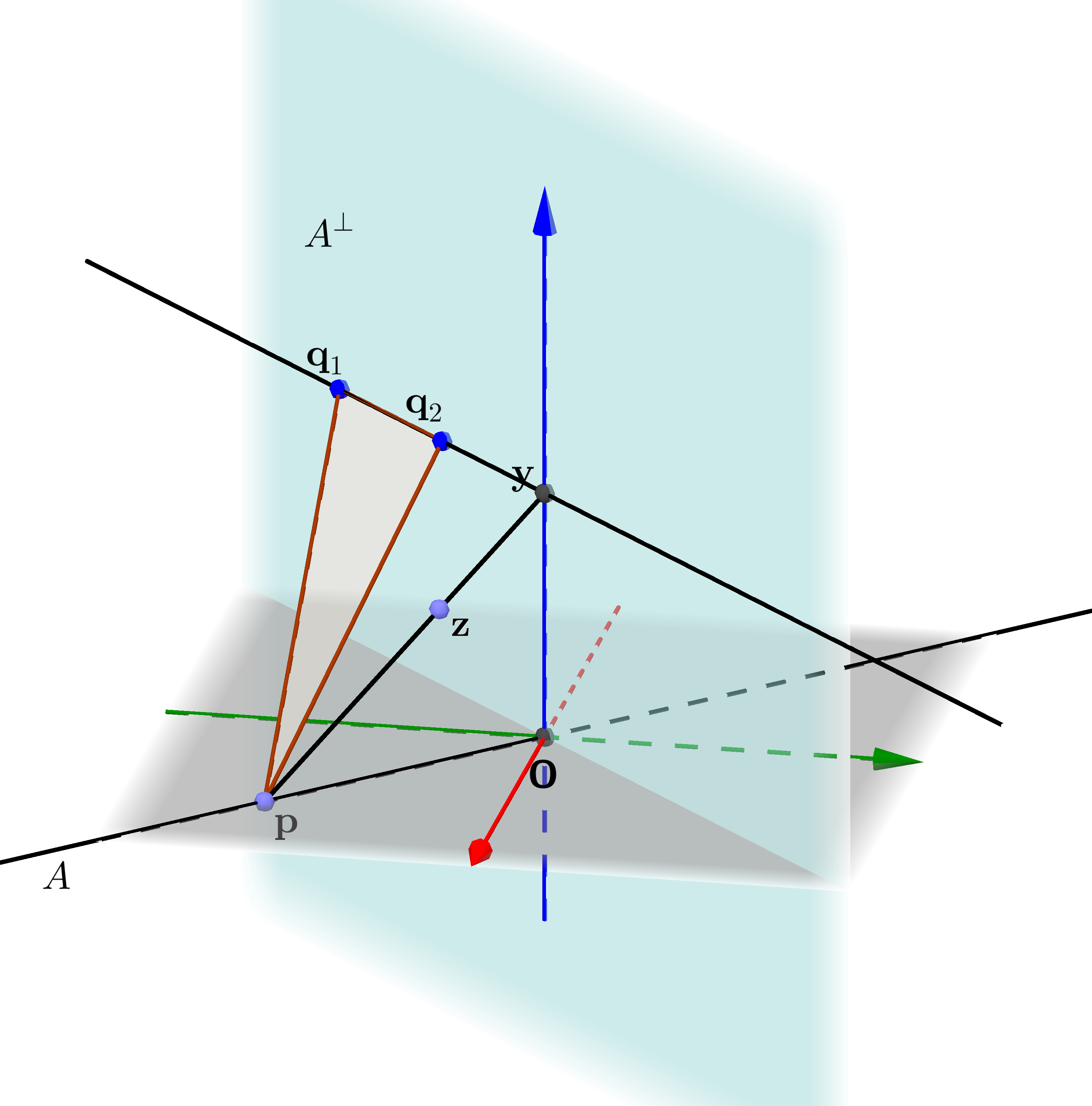}\includegraphics[width=2.6in]{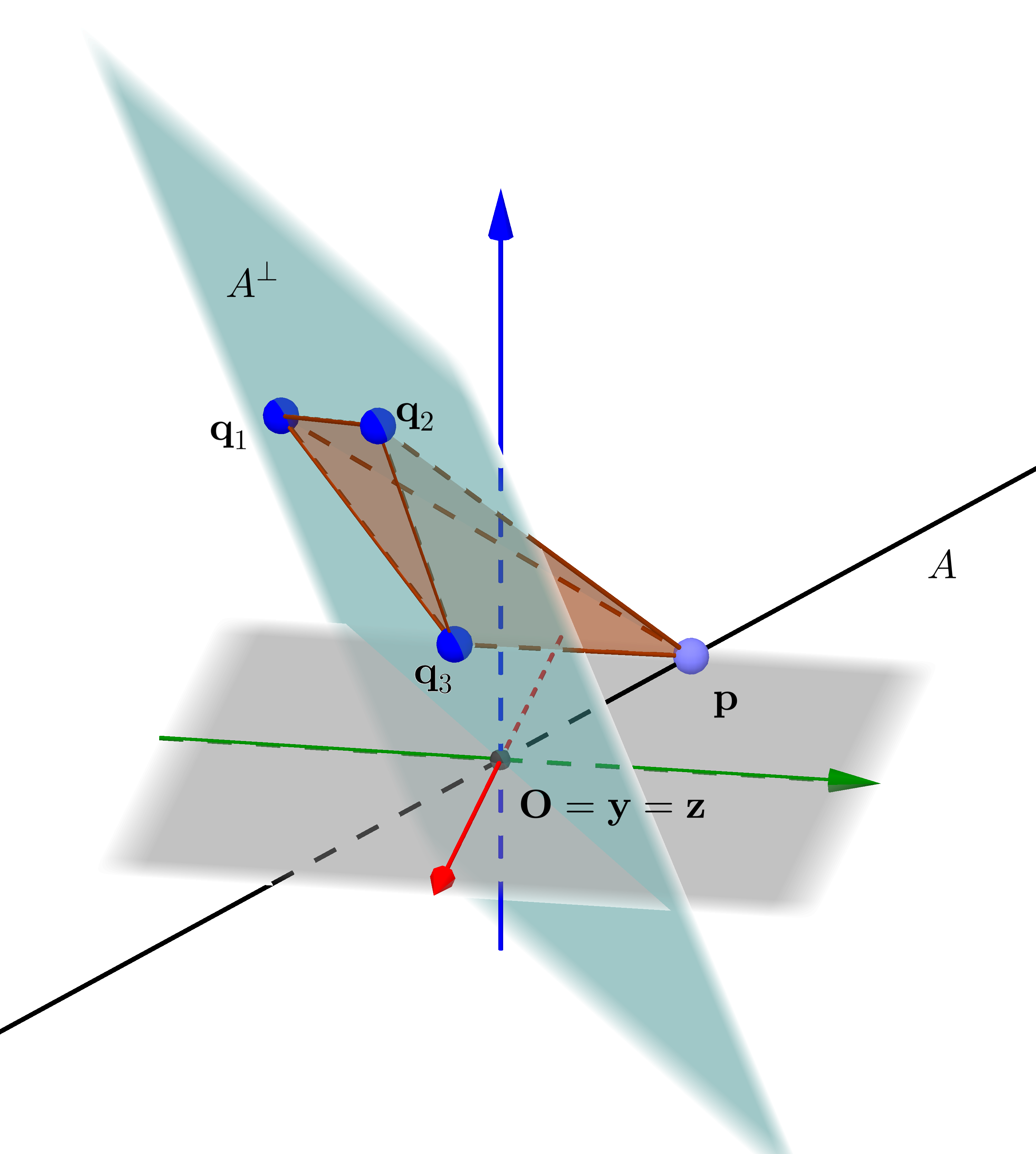}
\caption{Examples of Lemma \ref{lem:orthogonal}.  Left: the affine hull of $P\cup Q$ is not full dimensional, and thus the affine minimizer lies at $\ve{z}$ along the line segment connecting $\ve{x}=\ve{p}$ and $\ve{y}$.  Right: the convex hull of $P\cup Q$ is full-dimensional and thus the affine hull of $P\cup Q$ includes $O$ which is the affine minimizer.}
\label{fig:affinemins}
\end{figure}


\begin{lemma}\label{lem:orthogonal}
Let $A\subseteq \RR^d$ be a proper linear subspace. 
Let $P \subseteq A$ be a non-empty finite set. 
Let $Q \subseteq A^\perp$ be another non-empty finite set. 
Let $\ve{x}$ be the minimum norm point in $\aff P$. Let $\ve{y}$ be the minimum norm point in $\aff Q$. 
Let $\ve{z}$ be the minimum norm point in $\aff (P \cup Q)$.
We have:
\begin{enumerate}
\item 
$\ve{z}$ is the minimum norm point in $[\ve{x}, \ve{y}]$ and therefore, if $\ve x \neq \ve 0$ or $\ve y \neq \ve 0$, then $\ve{z} = \lambda \ve{x} + (1-\lambda) \ve{y}$ with $\lambda = \frac{\enorms{\ve{y}}}{\enorms{\ve{x}} + \enorms{\ve{y}}}$.

\item
If $\ve{x} \neq \ve{0}$ and $\ve{y} \neq \ve{0}$, then $\ve{z}$ is a strict convex combination of $\ve{x}$ and $\ve{y}$.

\item
If $\ve{x} \neq \ve{0}$, $\ve{y} \neq \ve{0}$ and $P$ and $Q$ are corrals, then $P \cup Q$ is also a corral.
\end{enumerate}
\end{lemma}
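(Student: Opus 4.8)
The plan is to avoid optimizing directly over $\aff(P \cup Q)$ and instead \emph{guess} the minimizer and certify it with Wolfe's affine criterion (Lemma \ref{lem:affineoptimality}). First I would record the orthogonality facts that drive everything. Since $A$ is a linear subspace we have $\aff P \subseteq A$ and $\aff Q \subseteq A^\perp$, so $\ve x \in A$ and $\ve y \in A^\perp$; hence $\ve x^T \ve y = 0$, and moreover $\ve p^T \ve y = 0$ for every $\ve p \in P$ and $\ve q^T \ve x = 0$ for every $\ve q \in Q$. Applying Lemma \ref{lem:affineoptimality} separately to $P$ and to $Q$ also gives $\ve p^T \ve x = \enorms{\ve x}$ for all $\ve p \in P$ and $\ve q^T \ve y = \enorms{\ve y}$ for all $\ve q \in Q$. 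These relations are the only inputs the rest of the proof needs.

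For Part 1, assuming $\ve x, \ve y$ are not both $\ve 0$, I would set $\ve w = \lambda \ve x + (1-\lambda)\ve y$ with $\lambda = \enorms{\ve y}/(\enorms{\ve x}+\enorms{\ve y})$. Writing $\ve x$ and $\ve y$ as affine combinations of $P$ and $Q$ shows $\ve w \in \aff(P \cup Q)$, since the coefficients sum to $\lambda + (1-\lambda) = 1$. Then orthogonality gives $\ve p^T \ve w = \lambda \enorms{\ve x}$ for every $\ve p \in P$ and $\ve q^T \ve w = (1-\lambda)\enorms{\ve y}$ for every $\ve q \in Q$, while $\enorms{\ve w} = \lambda^2 \enorms{\ve x} + (1-\lambda)^2 \enorms{\ve y}$ because $\ve x \perp \ve y$; a one-line substitution of the chosen $\lambda$ shows all three quantities equal $\enorms{\ve x}\enorms{\ve y}/(\enorms{\ve x}+\enorms{\ve y})$. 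By Lemma \ref{lem:affineoptimality}, $\ve w$ is the minimum norm point of $\aff(P \cup Q)$, so $\ve z = \ve w$. Since $\lambda \in [0,1]$ we get $\ve z \in [\ve x, \ve y] \subseteq \aff(P \cup Q)$, and as $\ve z$ minimizes the norm over the larger set it minimizes over the segment too; the degenerate case $\ve x = \ve y = \ve 0$ gives $\ve z = \ve 0$ directly, since then $\ve 0 \in \aff P \subseteq \aff(P\cup Q)$.

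Part 2 is then immediate: if $\ve x \neq \ve 0$ and $\ve y \neq \ve 0$ both squared norms are positive, so $\lambda \in (0,1)$ and $\ve z$ is a strict convex combination of $\ve x$ and $\ve y$. For Part 3 I would first verify that $P \cup Q$ is affinely independent, since the definition of corral requires it. Suppose $\sum_{\ve p \in P} \gamma_{\ve p}\, \ve p + \sum_{\ve q \in Q} \delta_{\ve q}\, \ve q = \ve 0$ with $\sum \gamma_{\ve p} + \sum \delta_{\ve q} = 0$. The first sum lies in $A$ and the second in $A^\perp$, and $A \cap A^\perp = \{\ve 0\}$, so each vanishes separately. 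If $c := \sum \gamma_{\ve p} \neq 0$ then $\ve 0 \in \aff P$, forcing $\ve x = \ve 0$ and contradicting the hypothesis; hence $c = 0 = \sum \delta_{\ve q}$, and affine independence of the corrals $P$ and $Q$ forces all coefficients to be $0$. Finally, because $\ve x \in \operatorname{relint}(\conv P)$ and $\ve y \in \operatorname{relint}(\conv Q)$, I can write $\ve x = \sum \alpha_{\ve p}\, \ve p$ and $\ve y = \sum \beta_{\ve q}\, \ve q$ with all $\alpha_{\ve p}, \beta_{\ve q} > 0$; then $\ve z = \sum (\lambda \alpha_{\ve p})\, \ve p + \sum ((1-\lambda)\beta_{\ve q})\, \ve q$ has strictly positive coefficients (as $\lambda, 1-\lambda \in (0,1)$) summing to $1$, so $\ve z \in \operatorname{relint}(\conv(P \cup Q))$ and $P \cup Q$ is a corral.

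I expect the affine-independence step of Part 3 to be the main obstacle: it is the one place where orthogonality and the nonvanishing of $\ve x, \ve y$ must interact, and it is easy to overlook that ``corral'' silently demands affine independence of $P \cup Q$, not just of $P$ and of $Q$. By contrast, the norm identities in Part 1 are routine once the orthogonality relations are written down, and the simplex characterization of the relative interior (all barycentric coordinates positive, available because $P \cup Q$ is affinely independent) makes the relative-interior conclusion of Part 3 short.
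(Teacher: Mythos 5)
Your proof is correct and takes essentially the same approach as the paper's: guess the point $\lambda \ve{x} + (1-\lambda)\ve{y}$ with $\lambda = \enorms{\ve{y}}/(\enorms{\ve{x}}+\enorms{\ve{y}})$, certify it via Wolfe's affine criterion (Lemma~\ref{lem:affineoptimality}), and read off parts 2 and 3 from $\lambda \in (0,1)$ together with the strict convex combinations of $P$ and $Q$. The only differences are cosmetic: you verify the criterion by direct algebraic computation of the inner products where the paper uses a two-step projection argument, and your explicit check that $P \cup Q$ is affinely independent in part 3 makes rigorous a detail the paper leaves implicit.
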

\begin{proof}
If $\ve x=\ve y=\ve 0$ then part 1 follows immediately. 
If at least one of $\ve x, \ve y$ is non-zero, then they are also distinct by the orthogonality assumption.
Given two distinct points $\ve{a}, \ve{b}$, one can show that the minimum norm point in the line through them is $\lambda \ve{a} + (1-\lambda)\ve{b}$ where $\lambda = \ve{b}^T (\ve{b}-\ve{a})/\enorms{\ve{b}-\ve{a}}$.
For points $\ve{x}, \ve{y}$ as in the statement, the minimum norm point in $\aff (\ve{x} \cup \ve{y})$ is $\ve{z}' = \lambda \ve{x} + (1-\lambda)\ve{y}$ with $\lambda = \frac{\enorms{\ve{y}}}{\enorms{\ve{x}} + \enorms{\ve{y}}} \in [0,1]$. Thus, $\ve{z}'$ is also the minimum norm point in $[\ve{x},\ve{y}]$.
We will now use the optimality condition in \cref{lem:affineoptimality} to conclude that $\ve{z}' = \ve{z}$.
Let $\ve{p} \in P$.
Then $\ve{p}^T \ve{z}'$ can be computed in two steps: 
First project $\ve{p}$ onto $\linspan{\ve{x},\ve{y}}$ (a subspace that contains $\ve{z}'$). This projection is $\ve{x}$ by optimality of $\ve{x}$. 
Then project onto $\ve{z}'$. This shows that $\ve{p}^T \ve{z}' = \ve{x}^T \ve{z}' = \enorms{\ve{z}'}$.
A similar calculation shows $\ve{q}^T \ve{z}' = \enorms{\ve{z}'}$ for any $\ve{q} \in Q$. 
We conclude that $\ve{z}'$ is the minimum norm point in $\aff (P \cup Q)$.
This proves part 1.

Part 2 follows from our expression for $\lambda$ above, which is in $(0,1)$ when $\ve{x} \neq \ve{0}$ and $\ve{y} \neq \ve{0}$.

Under the assumptions of part 3, we have that $\ve{x}$ is a strict convex combination of $P$ and $\ve{y}$ is a strict convex combination of $Q$. This combined with the conclusion of part 2 gives that $\ve{z}$ is a strict convex combination of $P\cup Q$. The claim in part 3 follows.
\end{proof}

The following lemma shows conditions under which, if we have a corral and a new point that only has components along the minimum norm point of the corral and along new coordinates, then the corral with the new point added is also a corral. Moreover, the new minimum norm point is a convex combination of the old minimum norm point and the added point. Figure \ref{fig:pluspointcorral} gives an example of such a situation in $\RR^3$. Denote by $\linspan{M}$ the linear span of the set $M$.

\begin{myfigure}
\centering
\includegraphics[width=3.1in]{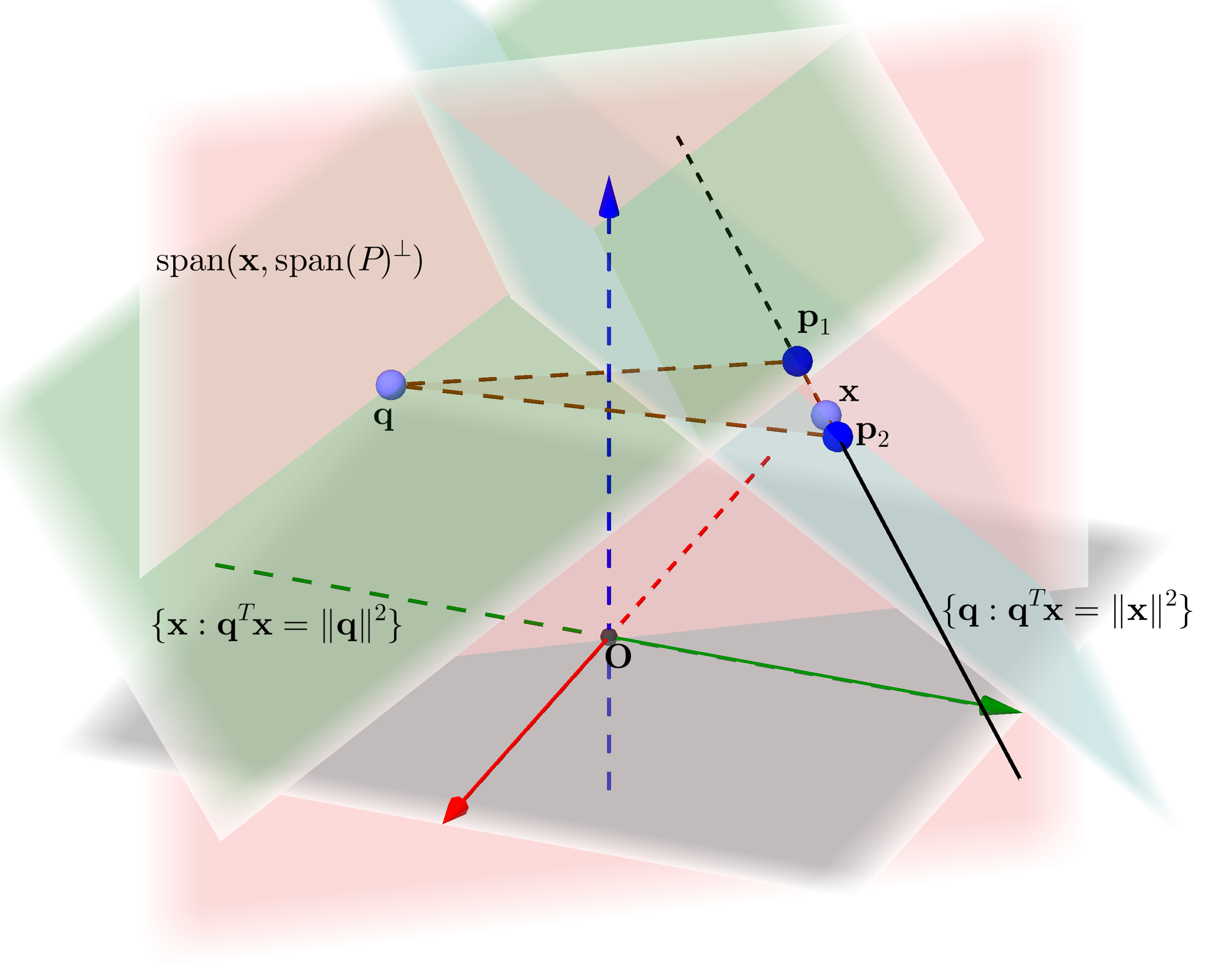}
\caption{An example of Lemma \ref{lem:corralpluspoint} in which point $\ve{q}$ satisfies all assumptions and $P \cup \{\ve{q}\}$ is a corral.  The hyperplanes are labeled with their defining properties and demonstrate that $\ve{q}^T\ve{x} < \min\{\|\ve{x}\|^2,\|\ve{q}\|^2\}.$ The minimizer of $P \cup \{\ve{q}\}$ lies at the intersection of the blue, vertical axis and $\conv(P \cup \{\ve{q}\}).$}
\label{fig:pluspointcorral}
\end{myfigure}

\begin{lemma}\label{lem:corralpluspoint}
Let $P \subseteq \RR^d$ be a finite set of points that is a corral. Let $\ve{x}$ be the minimum norm point in $\aff P$.
Let $\ve{q} \in \operatorname{span} \bigl(\ve{x}, \linspan{P}^\perp\bigr)$, and assume $\ve{q}^T\ve{x} < \min \bigl\{\enorms{\ve{q}}, \enorms{\ve{x}} \bigr\}$. Then $P \cup \{\ve{q}\}$ is a corral. Moreover, the minimum norm point $\ve y$ in  $\conv (P \cup \{\ve{q}\})$ is a (strict) convex combination of $\ve{q}$ and the minimum norm point of $P$: $\ve{y} = \lambda \ve{x} + (1-\lambda)\ve{q}$ with $\lambda = \ve{q}^T (\ve{q}-\ve{x})/\enorms{\ve{q}-\ve{x}}$.
\end{lemma}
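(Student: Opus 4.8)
The plan is to guess the minimizer explicitly and verify it with the affine optimality criterion of \cref{lem:affineoptimality}. Set $\ve{z} = \lambda \ve{x} + (1-\lambda)\ve{q}$ with $\lambda = \ve{q}^T(\ve{q}-\ve{x})/\enorms{\ve{q}-\ve{x}}$ (note $\ve{q}\neq\ve{x}$, since $\ve{q}^T\ve{x}<\enorms{\ve{x}}$ rules out equality). By the one-dimensional computation recorded in the proof of \cref{lem:orthogonal}, this $\ve{z}$ is exactly the minimum norm point on the line through $\ve{x}$ and $\ve{q}$; applying \cref{lem:affineoptimality} to the two-point set $\{\ve{x},\ve{q}\}$ then gives $\ve{x}^T\ve{z} = \ve{q}^T\ve{z} = \enorms{\ve{z}}$. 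Since $\ve{x}\in\aff P$, the point $\ve{z}$ is an affine combination of $P\cup\{\ve{q}\}$, and to conclude that $\ve{z}$ is the minimum norm point of $\aff(P\cup\{\ve{q}\})$ it remains, again by \cref{lem:affineoptimality}, to check that $\ve{p}^T\ve{z} = \enorms{\ve{z}}$ for every $\ve{p}\in P$.

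The crux is the orthogonality relation $\ve{p}^T\ve{q} = \ve{x}^T\ve{q}$ for all $\ve{p}\in P$, equivalently $(\ve{p}-\ve{x})^T\ve{q}=0$. I would prove it from two facts: first, $(\ve{p}-\ve{x})^T\ve{x} = \ve{p}^T\ve{x} - \enorms{\ve{x}} = 0$, since $\ve{x}$ is the minimum norm point of $\aff P$ and hence $\ve{p}^T\ve{x}=\enorms{\ve{x}}$ by \cref{lem:affineoptimality}; second, writing $\ve{q} = c\ve{x} + \ve{w}$ with $\ve{w}\in\linspan{P}^\perp$ (possible because $\ve{q}\in\operatorname{span}(\ve{x},\linspan{P}^\perp)$), we have $(\ve{p}-\ve{x})^T\ve{w}=0$ as $\ve{p}-\ve{x}\in\linspan{P}$. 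Since $\ve{q}$ lies in $\linspan{\ve{x},\ve{w}}$ and $\ve{p}-\ve{x}$ is orthogonal to both $\ve{x}$ and $\ve{w}$, the relation follows without ever computing $c$. Then $\ve{p}^T\ve{z} = \lambda\ve{p}^T\ve{x} + (1-\lambda)\ve{p}^T\ve{q} = \lambda\enorms{\ve{x}} + (1-\lambda)\ve{x}^T\ve{q} = \ve{x}^T\ve{z} = \enorms{\ve{z}}$, which closes the optimality check and identifies $\ve{z}$ as the affine minimizer.

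It then remains to package this as a corral statement. I would first observe $\ve{q}\notin\aff P$: otherwise \cref{lem:affineoptimality} would force $\ve{q}^T\ve{x}=\enorms{\ve{x}}$, contradicting $\ve{q}^T\ve{x}<\enorms{\ve{x}}$; hence $P\cup\{\ve{q}\}$ is affinely independent. Next I would check $\lambda\in(0,1)$: a short expansion of $\enorms{\ve{q}-\ve{x}}$ gives numerators $\enorms{\ve{q}}-\ve{q}^T\ve{x}$ and $\enorms{\ve{x}}-\ve{q}^T\ve{x}$ for $\lambda$ and $1-\lambda$ respectively, both positive precisely because $\ve{q}^T\ve{x}<\min\{\enorms{\ve{q}},\enorms{\ve{x}}\}$. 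Finally, since $P$ is a corral, $\ve{x}$ is a strict convex combination of the points of $P$; multiplying those coefficients by $\lambda>0$ and appending $\ve{q}$ with coefficient $1-\lambda>0$ exhibits $\ve{z}$ as a strict convex combination of $P\cup\{\ve{q}\}$, so $\ve{z}$ lies in the relative interior of $\conv(P\cup\{\ve{q}\})$ and $P\cup\{\ve{q}\}$ is a corral. Because $\ve{z}$ is the minimizer over all of $\aff(P\cup\{\ve{q}\})$ and lies in the convex hull, it is also the convex minimizer, giving $\ve{y}=\ve{z}$.

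The only real obstacle is the orthogonality identity $(\ve{p}-\ve{x})^T\ve{q}=0$; everything downstream is bookkeeping with the signs of $\lambda$ and $1-\lambda$. The hypothesis $\ve{q}\in\operatorname{span}(\ve{x},\linspan{P}^\perp)$ is exactly what makes the identity true, and it is what collapses the minimization over $\aff(P\cup\{\ve{q}\})$ to the single segment $[\ve{x},\ve{q}]$ that the lemma predicts. I would take care to keep the two orthogonality contributions separate — the one along $\ve{x}$ coming from affine optimality of $\ve{x}$, and the one along $\ve{w}$ coming from $\linspan{P}^\perp$ — rather than resolving $c$, which keeps the argument transparent.
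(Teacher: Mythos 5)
Your proof is correct and follows essentially the same route as the paper's: both take the candidate minimizer on the segment $[\ve{x},\ve{q}]$ with the same $\lambda$, verify it via \cref{lem:affineoptimality}, and conclude the corral property from the positivity of $\lambda$ and $1-\lambda$ together with the strictness of $\ve{x}$ as a convex combination of $P$; your orthogonality identity $(\ve{p}-\ve{x})^T\ve{q}=0$ is just the algebraic form of the paper's two-step projection argument. Your extra checks ($\ve{q}\neq\ve{x}$, and affine independence of $P\cup\{\ve{q}\}$ via $\ve{q}\notin\aff P$) make the write-up slightly more complete than the paper's, but do not change the approach.
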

\details{Condition is essentially necessary, though the case $\ve{x} = \ve{q}$ needs a bit of care.}
\begin{proof}
Let $\ve{y}$ be the minimum norm point in $\aff ({P \cup \{\ve{q}\}})$.
Intuitively, $\ve{y}$ should be the minimum norm point in the line through $\ve{x}$ and $\ve{q}$.
We will characterize $\ve{y}$ and show that it is a strict convex combination of $P \cup \{\ve{q}\}$ (which implies that it is a corral).
Given two points $\ve{a}, \ve{b}$, one can show that the minimum norm point in the line through them is $\lambda \ve{a} + (1-\lambda)\ve{b}$ where $\lambda = \ve{b}^T (\ve{b}-\ve{a})/\enorms{\ve{b}-\ve{a}}$.
Thus, we define $\ve{y} = \lambda \ve{x} + (1-\lambda)\ve{q}$ with $\lambda = \ve{q}^T (\ve{q}-\ve{x})/\enorms{\ve{q}-\ve{x}}$.
By definition we have $\ve{y} \in \aff(P \cup \{\ve{q}\})$.

The minimality of the norm of $\ve{y}$ follows from the optimality condition in Lemma \ref{lem:affineoptimality}. 
It holds by construction for $\ve{q}$. It also holds for $\ve{p} \in P$: 
The projection of $\ve{p}$ onto $\ve{y}$ can be computed in two steps. 
First, project onto $\linspan{\ve{x},\ve{q}}$ (a subspace that contains $\ve{y}$), which is $\ve{x}$ by optimality of $\ve{x}$. 
Then project onto $\ve{y}$. 
This shows that $\ve{p}^T \ve{y} = \ve{x}^T \ve{y} = \norms{\ve{y}}$ (the second equality by optimality of $\ve{y}$). 
We conclude that $\ve{y}$ is of minimum norm in $\aff{P \cup \{\ve{q}\}}$.

To conclude that $P \cup \{\ve{q}\}$ is a corral, we show that $\ve{y}$ is a strict convex combination of points $P \cup \{\ve{q}\}$.
It is enough to show that $\ve{y}$ is a strict convex combination of $\ve{x}$ and $\ve{q}$.
We have $\lambda = \ve{q}^T (\ve{q}-\ve{x})/\enorms{\ve{q}-\ve{x}} = \frac{\enorms{\ve{q}} - \ve{q}^T\ve{x}}{\enorms{\ve{q}-\ve{x}}} > 0$ by assumption.
We also have $1-\lambda = -\ve{x}^T (\ve{q}-\ve{x})/\enorms{\ve{q}-\ve{x}} = \frac{\enorms{\ve{x}} - \ve{q}^T\ve{x}}{\enorms{\ve{q}-\ve{x}}} > 0$ by assumption.
\end{proof}

Our last lemma shows that if we have points in two orthogonal subspaces, $A$ and $A^\perp$, then adding a point from $A^\perp$ to a set from $A$ does not cause any points from $A$ that previously did not violate Wolfe's criterion (for the affine minimizer) to violate it.  Figure \ref{fig:halfspaceintersection} demonstrates this situation.

\begin{myfigure}
\centering
\includegraphics[width=3in]{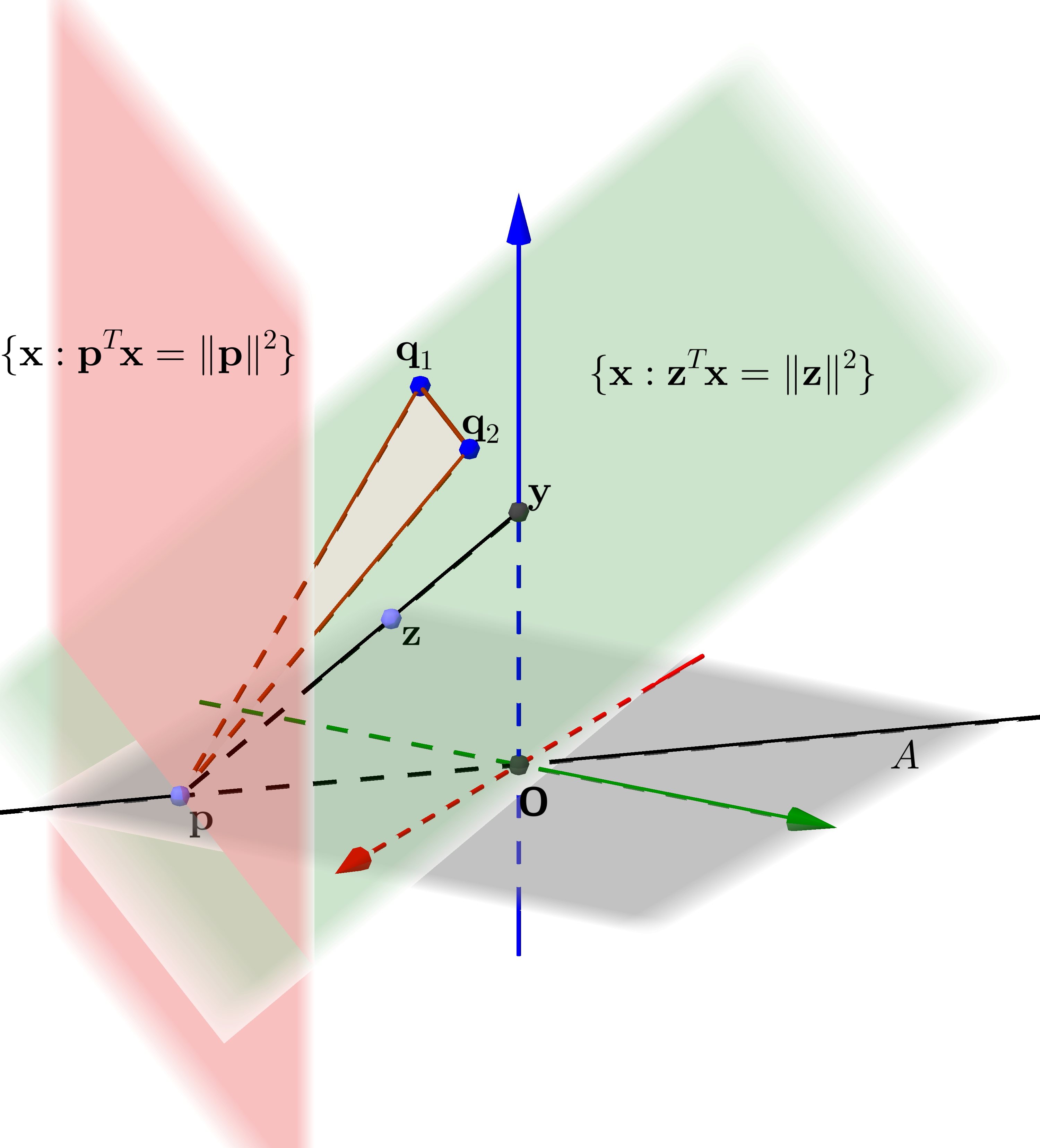}
\caption{An example of Lemma \ref{lem:secondround} in which adding points $Q$ from $A^\perp$ to points $P$ from $A$ create a new affine minimizer, $\ve{z}$, but the points satisfying Wolfe's criterion in $A$ remain the same.  Note that both hyperplanes intersect at the affine minimizer of $P$, so the halfspace intersections with $A$ are the same.}
\label{fig:halfspaceintersection}
\end{myfigure}

\begin{lemma}\label{lem:secondround}
For a point $\ve{z}$ define $H_{\ve{z}}=\{\ve{w} \in \RR^n \suchthat \ve{w}^T \ve{z} < \enorms{\ve{z}} \}$.
Suppose that we have an instance of the minimum norm point problem in $\RR^d$ as follows: 
Some points, $P$, live in a proper linear subspace $A$ and some, $Q$, in $A^\perp$. 
Let $\ve{x}$ be the minimum norm point in $\aff P$ and $\ve{y}$ be the minimum norm point in $\aff(P \cup Q)$. Then $H_{\ve{y}} \cap A = H_{\ve{x}} \cap A$.
\end{lemma}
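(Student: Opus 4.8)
The plan is to reduce everything to the one-dimensional picture supplied by \cref{lem:orthogonal}. Let $\ve{v}$ be the minimum norm point in $\aff Q$. Since $P\subseteq A$, $Q\subseteq A^\perp$, and $A$ is a proper linear subspace, the hypotheses of \cref{lem:orthogonal} are met, and part~1 tells us that the minimum norm point of $\aff(P\cup Q)$ is $\ve{y} = \lambda\ve{x} + (1-\lambda)\ve{v}$ with $\lambda = \enorms{\ve{v}}/(\enorms{\ve{x}}+\enorms{\ve{v}})$ (assuming, as discussed below, that $\ve{y}\neq\ve{0}$). The whole proof then rests on two elementary identities that exploit the splitting $\ve{x}\in A$, $\ve{v}\in A^\perp$.

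First I would record the norm identity $\enorms{\ve{y}} = \lambda\,\enorms{\ve{x}}$. This follows either by expanding $\enorms{\ve{y}} = \lambda^2\enorms{\ve{x}} + (1-\lambda)^2\enorms{\ve{v}}$ (the cross term vanishes by orthogonality) and simplifying using the explicit value of $\lambda$, or, more slickly, from optimality: by \cref{lem:affineoptimality} applied to $\ve{y}$ over $P\cup Q$ we have $\ve{p}^T\ve{y} = \enorms{\ve{y}}$ for every $\ve{p}\in P$, hence $\ve{x}^T\ve{y} = \enorms{\ve{y}}$ since $\ve{x}\in\aff P$, while $\ve{x}^T\ve{y} = \lambda\enorms{\ve{x}}$ because $\ve{x}\perp\ve{v}$.

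Second, for any $\ve{w}\in A$ I would use $\ve{w}^T\ve{v}=0$ to get $\ve{w}^T\ve{y} = \lambda\,\ve{w}^T\ve{x}$. Subtracting the two displays yields the key relation $\ve{w}^T\ve{y} - \enorms{\ve{y}} = \lambda\bigl(\ve{w}^T\ve{x} - \enorms{\ve{x}}\bigr)$ valid for all $\ve{w}\in A$. Since $\lambda>0$, the left-hand side is negative exactly when the right-hand side is, i.e.\ $\ve{w}\in H_{\ve{y}}$ iff $\ve{w}\in H_{\ve{x}}$; intersecting with $A$ gives $H_{\ve{y}}\cap A = H_{\ve{x}}\cap A$. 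Geometrically this is the assertion in the figure caption that the two ``Wolfe hyperplanes'' $\{\ve{w}:\ve{w}^T\ve{x}=\enorms{\ve{x}}\}$ and $\{\ve{w}:\ve{w}^T\ve{y}=\enorms{\ve{y}}\}$ cut $A$ in the same affine hyperplane, both passing through $\ve{x}$.

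The one place that needs care --- and the main obstacle --- is the strict positivity $\lambda>0$, equivalently $\ve{v}\neq\ve{0}$, equivalently $\ve{0}\notin\aff Q$, equivalently $\ve{y}\neq\ve{0}$. If $\ve{v}=\ve{0}$ then $\ve{y}=\ve{0}$, so $H_{\ve{y}}=\emptyset$ while $H_{\ve{x}}\cap A$ can be nonempty (it contains $\ve{0}$ whenever $\ve{x}\neq\ve{0}$), and the scaling identity degenerates. I would therefore either invoke the lemma only in the regime $\ve{y}\neq\ve{0}$ --- which is precisely the regime in which Wolfe's method has not yet terminated, so the restriction is harmless for the application --- or explicitly add the hypothesis $\ve{0}\notin\aff Q$. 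Under that mild nondegeneracy one has $\lambda>0$ and the three-step argument above goes through verbatim.
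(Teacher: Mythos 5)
Your proof is correct on the non-degenerate regime and reaches the right conclusion, but by a genuinely different route than the paper's. The paper argues from scratch: it first proves $\ve{y} \in \linspan{\ve{x}, Q}$ by contradiction (splitting off a component of $\ve{y}$ in $A$ orthogonal to $\ve{x}$ and exhibiting a shorter point of $\aff(P\cup Q)$), and then reasons geometrically that $H_{\ve{y}} \cap A$ is a halfspace of $A$ whose normal is the projection of $\ve{y}$ onto $A$, hence parallel to $\ve{x}$, and which has $\ve{x}$ on its boundary, so it must coincide with $H_{\ve{x}} \cap A$. You instead reuse \cref{lem:orthogonal}, part 1, to get the explicit representation $\ve{y} = \lambda\ve{x} + (1-\lambda)\ve{v}$ with $\lambda = \enorms{\ve{v}}/(\enorms{\ve{x}}+\enorms{\ve{v}})$, and reduce the claim to the scaling identity $\ve{w}^T\ve{y} - \enorms{\ve{y}} = \lambda\bigl(\ve{w}^T\ve{x} - \enorms{\ve{x}}\bigr)$ for $\ve{w} \in A$. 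Your route is shorter given the earlier lemma, and it makes explicit a quantitative point the paper's wording leaves implicit: the projection of $\ve{y}$ onto $A$ is a \emph{positive} multiple of $\ve{x}$ (``parallel'' alone would not exclude the antipodal halfspace).

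Your degeneracy discussion is also a genuine catch, not an artifact of your method: if $\ve{0} \in \aff Q$ (so $\ve{v}=\ve{0}$) while $\ve{x}\neq\ve{0}$, then $\ve{y}=\ve{0}$, $H_{\ve{y}}=\emptyset$, yet $\ve{0}\in H_{\ve{x}}\cap A$, so the lemma as literally stated fails; the paper's proof silently assumes non-degeneracy too, since its final step treats $H_{\ve{y}}\cap A$ as a bona fide halfspace. As you say, the restriction is harmless where the lemma is used: there $Q=\{\rpoint{d},\spoint{d}\}$ and $\aff Q$ stays at distance at least $\m{d-2}/4>0$ from the origin. One small slip: your equivalence ``$\ve{v}\neq\ve{0}$ iff $\ve{y}\neq\ve{0}$'' breaks when $\ve{x}=\ve{0}$ (then $\ve{y}=\ve{0}$ regardless of $\ve{v}$), but in that case both sides of the conclusion are empty and your identity still holds with $\lambda=1$, so nothing is lost.
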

\begin{proof}
Let $B$ be the span of $\ve{x}$ and $Q$. We first show $\ve{y} \in B$. 
To see this, suppose not. 
Decompose $\ve{y}$ as $\ve{y}=\lambda \ve{v} + \sum_{\ve{q} \in Q} \mu_q \ve{q}$, where $\ve{v} \in \aff P$ and $\lambda + \sum \mu_q = 1$.
Decompose $\ve{v}$ as $\ve{v} = \ve{u} + \ve{x}$ where $\ve{u} \perp \ve{x}$ and $\ve{u} \in A$ (this is possible because $\ve{v}-\ve{x}$ is orthogonal to $\ve{x}$, by optimality of $\ve{x}$, \cref{lem:affineoptimality}).
Thus, $\ve{y} = \lambda \ve{u} + \lambda \ve{x} + \sum_{\ve{q} \in Q} \mu_q \ve{q}$ with $\lambda \ve{u}$ orthogonal to $\lambda \ve{x} + \sum_{\ve{q} \in Q} \mu_q \ve{q}$.
This implies that $\ve{y}' = \lambda \ve{x} + \sum_{\ve{q} \in Q} \mu_q \ve{q}$ has a smaller norm than $\ve{y}$ and $\ve{y}' \in \aff (P \cup Q)$. 
This is a contradiction.

To conclude, we have $H_{\ve{y}} \cap A$ is a halfspace in $A$ whose normal is parallel to the projection of $\ve{y}$ onto $A$
(It is helpful to understand how to compute the intersection of a hyperplane with a subspace. 
If $T_{\ve{g}} = \{\ve{w} : \ve{w} \cdot \ve{g} = 1\}$ and $S$ is a linear subspace, then $T_{\ve{g}} \cap S = \{\ve{w} \in S : \ve{w} \cdot \proj_S \ve{g} = 1 \}$. 
In other words, in order to intersect a hyperplane with a subspace we project the normal.) 
That is, it is parallel to $\ve{x}$. 
But that halfspace must also contain $\ve{x}$ on its boundary.
Thus, that halfspace is equal to $H_{\ve{x}} \cap A$.
\end{proof}

We will now describe our example in detail. The simplest version of our construction uses square roots and real numbers. We present instead a version with a few additional tweaks so that it only involves rational numbers.

Let $\Pset{1} = \{ 1\} \subseteq \QQ$. For odd $d >1$, let $\Pset{d}$ be a list of points in $\QQ^d$ defined inductively as follows: 
Let $\xstar{d}$ denote the minimum norm point in $\conv \Pset{d}$.  
Let $\M{d} := \max_{\ve{p} \in \Pset{d}} \norm{\ve{p}}_1$, which is a rational upper bound to the maximum $2$-norm among the points in $\Pset{d}$. 
(For a first reading one can let $\M{d}$ be the maximum $2$-norm among points in $\Pset{d}$, which leads to an essentially equivalent instance except that it is not rational.) 
Similarly, let $\m{d} = \norm{\xstar{d}}_\infty$, which is a rational lower bound to the minimum norm among points in $\conv \Pset{d}$. 
(Again, for a first reading one can define $\m{d} = \enorm{\xstar{d}}$ which leads to an essentially equivalent 
instance, except that it is not rational.) 

We finally present the example. If we identify $\Pset{d}$ with a matrix where the points are rows, then the points in $\Pset{d}$ are given by the following block matrix:

\[
\Pset{d} =
\begin{pmatrix}
\Pset{d-2} & 0 & 0 \\
\frac{1}{2} \xstar{d-2} & \frac{\m{d-2}}{4} & \M{d-2} \\
\frac{1}{2} \xstar{d-2} & \frac{\m{d-2}}{4} & -(\M{d-2}+1) \\
0 & \frac{\m{d-2}}{4} & \M{d-2}+2 \\
0 & \frac{\m{d-2}}{4} & -(\M{d-2}+3).
\end{pmatrix}.
\]

The last four rows of the matrix $\Pset{d}$ are the points $\ppoint{d}, \qpoint{d}, \rpoint{d}, \spoint{d}$ of the configuration. For a picture of the case of $\Pset{3}$ see Figure \ref{fig:pqdim3}.
\begin{figure}[ht]
\includegraphics[width=3.4in]{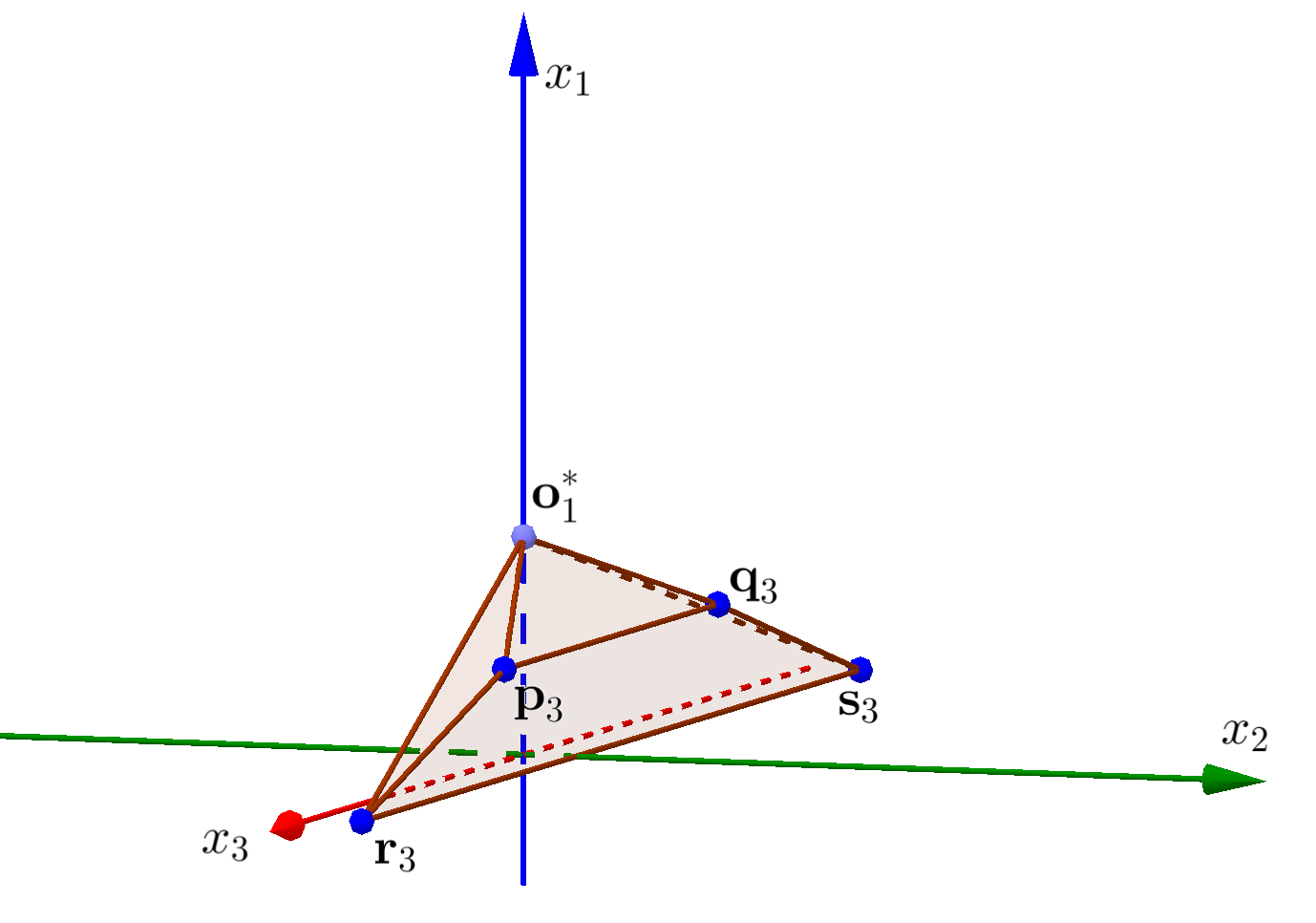}
\includegraphics[width=3in]{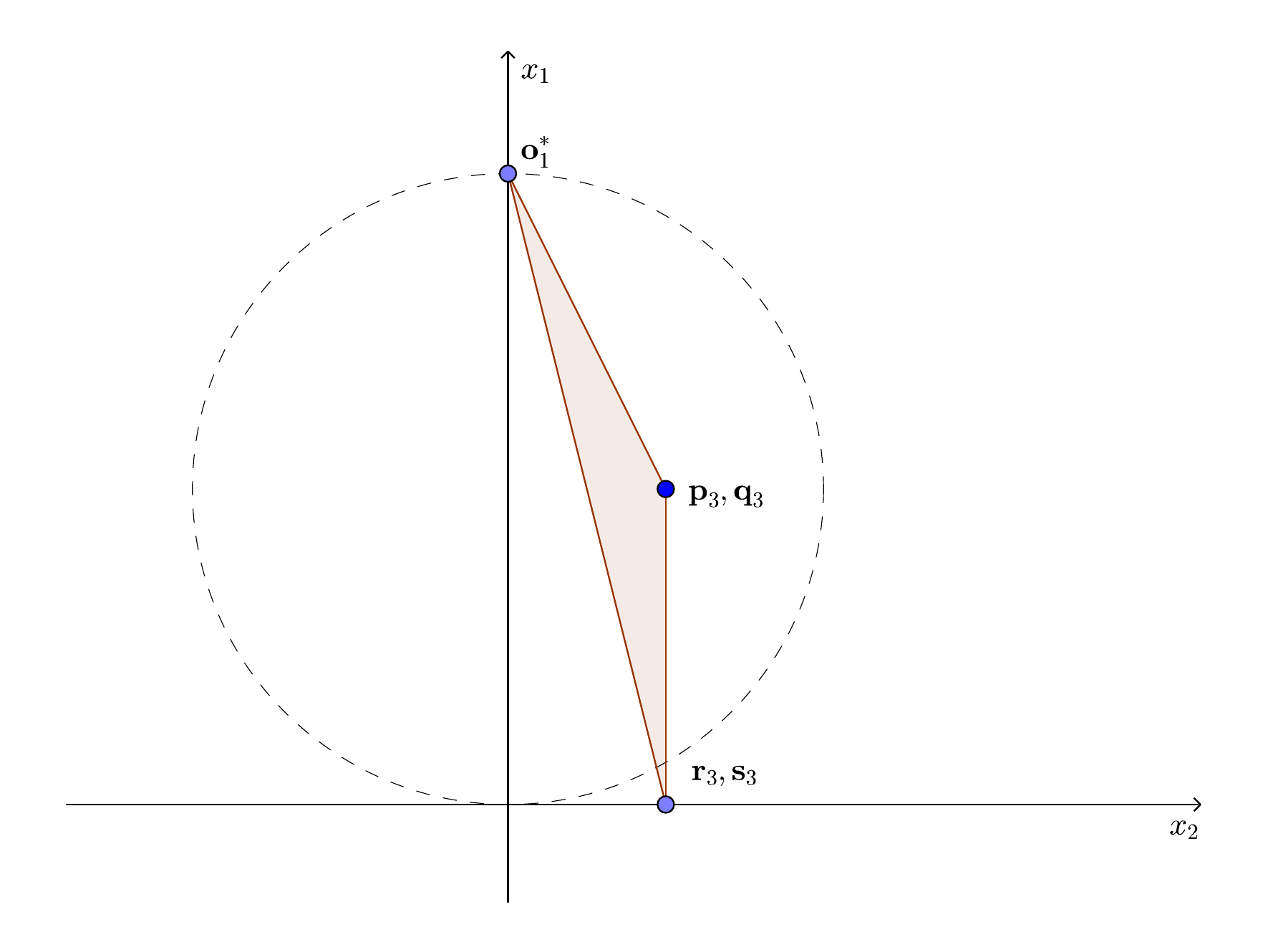}
\caption{Left: Three-dimensional view of $\Pset{3}$. Right: A two-dimensional view of $\Pset{3}$ projected along the $x_3$ coordinate axis.}\label{fig:pqdim3}
\end{figure}

\begin{remark}
First note that strictly speaking $\Pset{d-2} \subset \QQ^{d-2}$, and that we are defining an embedding of it into
$\QQ^d$, for which we have to use a recursive process. To avoid unnecessary notation, we will abuse the notation. The point $\ve{v_{d-2}}$ denotes both a point of $\Pset{d-2}$ and of the subsequent $\Pset{d}$, i.e., $\ve{v_{d-2}}=(\ve{v},0,0)$ will be the 
identical copy of $\ve{v_{d-2}}$ within $\Pset{d}$, but we add two extra zero coordinates. Depending on the context 
$\ve{v_{d-2}}$ will be understood as both a $(d-2)$-dimensional vector or as a $d$-dimensional vector (e.g., when
doing dot products). The points of $\Pset{d-2}$ become a subset of the point configuration $\Pset{d}$ by padding extra zeros.  See Figures \ref{fig:pq} and \ref{fig:expexcloud} which illustrate this embedding and address our visualizations of these sets in three dimensions.
\end{remark}

\begin{myfigure}[ht]
\centering
\includegraphics[width=4.0in]{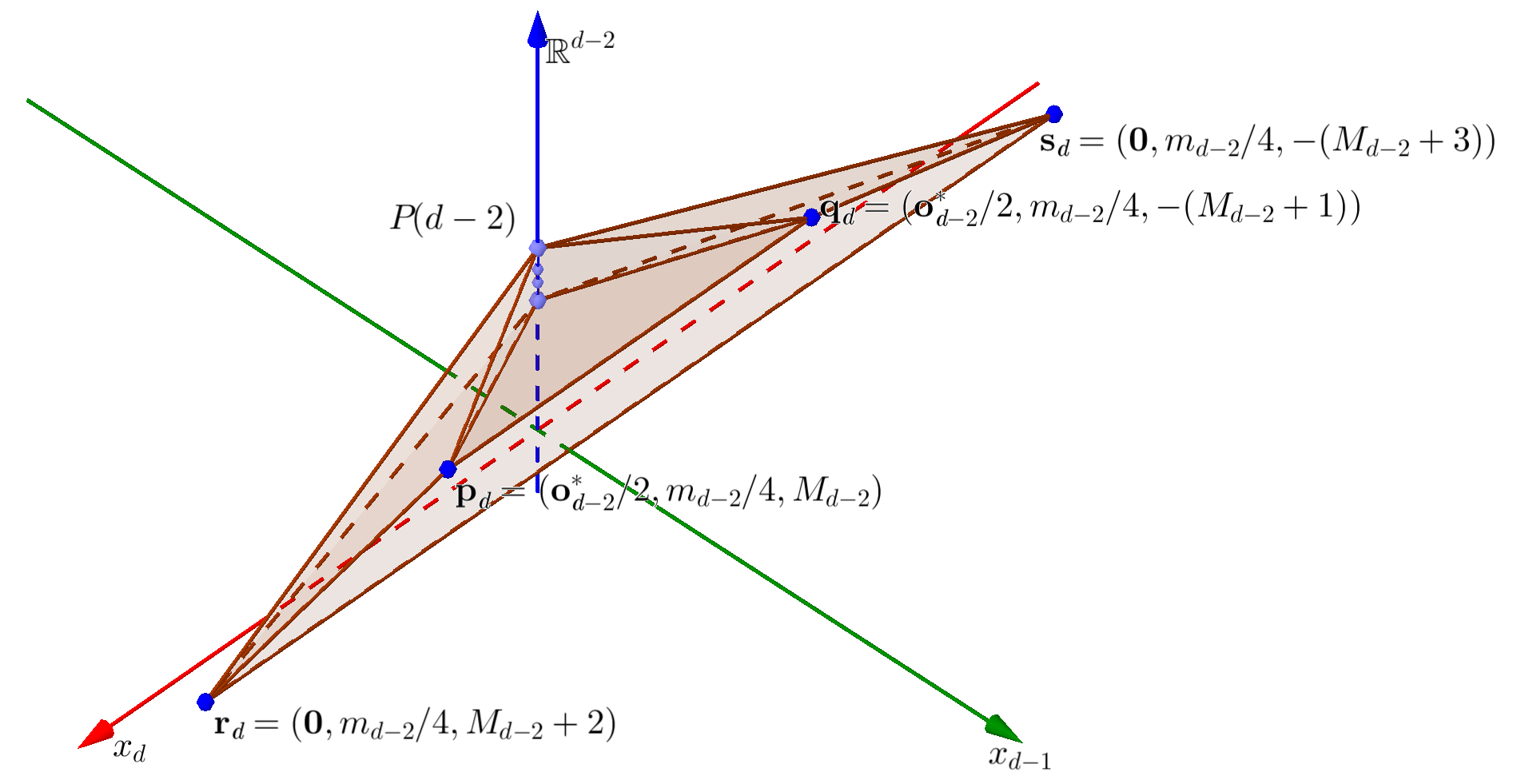}
\caption{As described in Figure \ref{fig:pq}, the axis labeled $\RR^{d-2}$ represents the $(d-2)$-dimensional subspace $\linspan{\Pset{d-2}}$ projected onto the one dimensional subspace $\linspan{\xstar{d-2}}$.  Here we illustrate that the projection of the set $\Pset{d-2}$ forms a `cloud' of points and the convex hull of this projection has many fewer faces than the unprojected convex hull.  For simplicity, we will visualize $\Pset{d-2}$ and subsets of $\Pset{d-2}$ as a single point in $\linspan{\xstar{d-2}}$ as in Figure \ref{fig:pq}.}
\label{fig:expexcloud}
\end{myfigure}

\newcommand{\thmlength}{5 \cdot 2^{k-1} -4}
\begin{theorem}\label{thm:lowerbound}
Consider the execution of Wolfe's method with the \emph{minnorm} point rule on input $\Pset{d}$ where $d=2k-1$.
Then the sequence of corrals has length $\thmlength$.
\end{theorem}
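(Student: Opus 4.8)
The plan is to induct on $k$ and prove the recurrence $L(k)=2L(k-1)+4$, where $L(k)$ denotes the number of corrals visited on $\Pset{2k-1}$. Together with the base case $L(1)=1$ (the single point $\Pset{1}$ is already its own minimum norm point, so the method stops at the corral $\{1\}$), this linear recurrence solves to $L(k)=5\cdot 2^{k-1}-4$, since $5\cdot 2^{k-1}-4=2(5\cdot 2^{k-2}-4)+4$. So the entire content is the recurrence, which I would obtain by splitting the execution on $\Pset{d}$, $d=2k-1$, into three consecutive phases and counting corrals in each.

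\emph{Phase 1.} First I would argue that the method reproduces, verbatim, its run on the smaller instance $\Pset{d-2}$. The key quantitative input is that $\enorms{\xstar{d}}$ is non-increasing in $d$ and bounded by $\enorms{\xstar{1}}=1$ (because the embedded copy of $\conv\Pset{d-2}$ sits inside $\conv\Pset{d}$), while $\M{d-2}\ge 1$; hence all four new points have strictly larger norm than every point of $\Pset{d-2}$. Because the current iterate stays in $A:=\linspan{\Pset{d-2}}$ (all new coordinates vanish on $\Pset{d-2}$ points), every affine-minimizer and minor-cycle computation happens inside $A$, and the \emph{minnorm} rule never selects a new point while some $\Pset{d-2}$ point violates Wolfe's criterion. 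Thus Phase 1 visits exactly the $L(k-1)$ corrals of the $\Pset{d-2}$ run and ends at the optimal corral $C^{*}$ of $\Pset{d-2}$ with iterate $\xstar{d-2}$.

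\emph{Transition.} At $\xstar{d-2}$ no $\Pset{d-2}$ point violates the criterion, whereas all four new points do (their inner product with $\xstar{d-2}$ is either $\tfrac12\enorms{\xstar{d-2}}$ or $0$, both below $\enorms{\xstar{d-2}}$). The norm ordering $\enorms{\ppoint{d}}<\enorms{\qpoint{d}}<\enorms{\rpoint{d}}<\enorms{\spoint{d}}$, which holds precisely because $\enorms{\xstar{d-2}}$ is small compared with $\M{d-2}$, forces the insertions to occur in the order $\ppoint{d},\qpoint{d},\rpoint{d},\spoint{d}$. Applying \cref{lem:corralpluspoint} with $\ve x=\xstar{d-2}$ and $\ve q=\ppoint{d}$ (note $\ppoint{d}\in\operatorname{span}(\xstar{d-2},A^{\perp})$ and $\ppoint{d}^{T}\xstar{d-2}=\tfrac12\enorms{\xstar{d-2}}<\min\{\enorms{\ppoint{d}},\enorms{\xstar{d-2}}\}$) shows the first insertion produces the corral $C^{*}\cup\{\ppoint{d}\}$. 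The remaining three insertions each trigger minor cycles that slide a two-point window, yielding $\{\ppoint{d},\qpoint{d}\}$, $\{\qpoint{d},\rpoint{d}\}$, and $\{\rpoint{d},\spoint{d}\}$; these are exactly four new corrals. I expect this transition to be the main obstacle: I must verify that inserting $\qpoint{d}$ ejects \emph{all} of $C^{*}$ within one major cycle and that each later insertion ejects exactly the oldest surviving new point. This reduces to computing the affine minimizers of $C^{*}\cup\{\ppoint{d},\qpoint{d}\}$ and of the two-point windows for a general optimal corral $C^{*}$ and checking the signs of the barycentric coordinates; the choice of last coordinates $\M{d-2},-(\M{d-2}+1),\M{d-2}+2,-(\M{d-2}+3)$ is engineered so these signs come out right.

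\emph{Phase 2.} The affine minimizer of $\{\rpoint{d},\spoint{d}\}$ is $(\mathbf{0},\m{d-2}/4,0)$, whose component in $A$ is zero, so every $\ve p\in\Pset{d-2}$ now satisfies $\ve p^{T}(\mathbf{0},\m{d-2}/4,0)=0<\m{d-2}^{2}/16$ and hence violates Wolfe's criterion. Since $\rpoint{d},\spoint{d}\in A^{\perp}$, I would invoke \cref{lem:orthogonal} (appending the orthogonal corral $\{\rpoint{d},\spoint{d}\}$ to any corral of $\Pset{d-2}$ again gives a corral, with minimizer a strict convex combination) together with \cref{lem:secondround} (the set of $\Pset{d-2}$ points violating the criterion for the current affine minimizer is unchanged by the orthogonal points). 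These two facts let the $A$-coordinates of the run replay Phase 1 exactly, now carrying $\{\rpoint{d},\spoint{d}\}$ along; the \emph{minnorm} rule again prefers the small-norm $\Pset{d-2}$ points over $\ppoint{d},\qpoint{d}$, so Phase 2 revisits the $L(k-1)$ corrals of $\Pset{d-2}$, each with $\{\rpoint{d},\spoint{d}\}$ appended. To close the argument I would check, via \cref{wolfec}, that at the terminal corral $C^{*}\cup\{\rpoint{d},\spoint{d}\}$ the two points $\ppoint{d},\qpoint{d}$ satisfy the criterion, so the method halts exactly there. Summing the three counts gives $L(k-1)+4+L(k-1)=2L(k-1)+4$, completing the induction.
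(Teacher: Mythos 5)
Your decomposition is exactly the paper's: the same three phases (a replay of $\Cset{d-2}$, the four transition corrals $\Oset{d-2}\ppoint{d}$, $\ppoint{d}\qpoint{d}$, $\qpoint{d}\rpoint{d}$, $\rpoint{d}\spoint{d}$, then the replay $\Cset{d-2}\rpoint{d}\spoint{d}$), the same recurrence $T(d)=2T(d-2)+4$ with $T(1)=1$, and the same supporting lemmas; Phase~1 and the first insertion via \cref{lem:corralpluspoint} are handled as in the paper. However, the two steps you defer are precisely where the paper must work hardest, and your proposed verifications would not go through as stated. For the transition, you plan to ``compute the affine minimizers of $C^*\cup\{\ppoint{d},\qpoint{d}\}$ and check the signs of the barycentric coordinates.'' But $C^*=\Oset{d-2}$ is the optimal corral of a recursively defined instance, so you have no explicit coordinates for it, and the sign pattern of one affine minimizer does not determine the outcome of the inner loop, which removes points one at a time and recomputes minimizers after each removal. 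The paper sidesteps the inner loop entirely: each successive corral must be strictly closer to the origin, the corral $\Oset{d-2}\cup\{\qpoint{d}\}$ is \emph{farther} than $\Oset{d-2}\cup\{\ppoint{d}\}$, and for any nonempty $S\subseteq\Oset{d-2}$ the minimum norm point of $\conv(S\cup\{\ppoint{d},\qpoint{d}\})$ lies on the segment $[\ppoint{d},\qpoint{d}]$, so no such set can be a corral; this leaves $\{\ppoint{d},\qpoint{d}\}$ as the only candidate. You would also need to check (as the paper does by a projection argument) that no point of $\Pset{d-2}$ is improving at $\Oset{d-2}\ppoint{d}$, $\ppoint{d}\qpoint{d}$, and $\qpoint{d}\rpoint{d}$; your assertion that the norm ordering ``forces'' the insertion order silently assumes this, and without it the minnorm rule would reinsert a small-norm point of $\Pset{d-2}$ too early.

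The second gap is in Phase~2. \Cref{lem:orthogonal} part~3 together with \cref{lem:secondround} gives the replay only for major cycles of the $\Pset{d-2}$ run that end without minor cycles: they identify the entering point $\ve a$ and show that if $C\ve a$ is a corral then so is $C\ve a\rpoint{d}\spoint{d}$. But $\Cset{d-2}$ necessarily contains major cycles with minor cycles (that is the whole point of the recursive construction: points leave and reenter), and for those you must prove that the inner loop on $C\rpoint{d}\spoint{d}\ve a$ removes exactly the same points, in the same order, as the inner loop on $C\ve a$. This does not follow from the two lemmas; the paper proves it by a separate induction on the inner-loop iterations, maintaining the invariant that each iterate $\ve{\bar x_i}$ of the augmented run is a strict convex combination of the corresponding iterate $\ve{x_i}$ of the small run and the point $\frac{\m{d-2}}{4}\ve{e_{d-1}}$, and arguing geometrically that the relevant intersections reduce to a triangle so that the sets of removable points coincide. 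Without this argument (or a substitute for it), the count of corrals in your second phase --- and hence the recurrence --- is unproven.
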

\begin{proof}
Points in $\Pset{d}$ are shown in order of increasing norm.
Let $\ppoint{d}, \qpoint{d}, \rpoint{d}, \spoint{d}$ denote the last four points of $\Pset{d}$, respectively. 
Let $\Cset{d}$ denote the ordered sequence of corrals in the execution of Wolfe's method on $\Pset{d}$. 
Let $\Oset{d}$ denote the last (optimal) corral in $\Cset{d}$.


The rest of the proof will establish that the sequence of corrals $\Cset{d}$ is
\begin{align*}
\Cset{d-2} \\
\Oset{d-2} \ppoint{d} \\
\ppoint{d} \qpoint{d} \\
\qpoint{d} \rpoint{d} \\
\rpoint{d} \spoint{d} \\
\Cset{d-2} \rpoint{d} \spoint{d}
\end{align*}
(where a concatenation such as $\Cset{d-2} \rpoint{d} \spoint{d}$ denotes every corral in $\Cset{d-2}$ with $\rpoint{d}$ and $\spoint{d}$ added).
After this sequence of corrals is established, we solve the resulting recurrence relation: Let $T(d)$ denote the length of $\Cset{d}$. We have $T(1) = 1$, $T(d) = 2 T(d-2) + 4$. This implies $T(d) = \thmlength$ (with $d=2k-1$).


All we must show now to complete the proof of Theorem \ref{thm:lowerbound} is that $\Cset{d}$ has indeed
the stated recursive form. We do this by induction on $d$. The steps of the proof are written as claims with individual proofs.

By construction, $\Cset{d}$ starts with $\Cset{d-2}$. 
This happens because points in $\Cset{d}$ are ordered by increasing norm and the proof proceeds inductively 
as follows:
The first corral in $\Cset{d}$ is the minimum norm point in $\Pset{d}$, which is also the first corral in $\Cset{d-2}$.
Suppose now that the first $t$ corrals of $\Cset{d}$ coincide with the first $t$ corrals of $\Cset{d-2}$.
We will show that corral $t+1$ in $\Cset{d}$ is the same as corral $t+1$ in $\Cset{d-2}$. 
To see this, it is enough to see that the set of points in $\Pset{d}$ that can enter (improving points) contains 
the point that enters in $\Cset{d-2}$ (with two zeros appended) and contains no point of smaller norm.
This two-part claim is true because the two new zero coordinates play no role in this and all points in $\Pset{d}$ 
but not in $\Pset{d-2}$ have a larger norm than any point in $\Pset{d}$.

Once $\Oset{d-2}$ is reached (with minimum norm point $\xstar{d-2}$), the set of improving points, as established by Wolfe's criterion, consist of $\{\ppoint{d}, \qpoint{d}, \rpoint{d}, \spoint{d}\}$. Now, because we are using the minimum-norm insertion rule, the next point to enter is $\ppoint{d}$.

\begin{claim}\label{claim:star}
$\Oset{d-2} \ppoint{d}$ is a corral.
\end{claim}
\begin{claimproof}
This is a special case of \cref{lem:corralpluspoint}.
We have $\ppoint{d} = (\xstar{d-2}/2, \m{d-2}/4,\M{d-2})$.
We just need to verify the two inequalities in \cref{lem:corralpluspoint}:
\[
(\xstar{d-2})^T{\ppoint{d}} = \enorms{\xstar{d-2}}/2 < \enorms{\xstar{d-2}} < \enorms{\ppoint{d}}.
\]
\end{claimproof}
\begin{claim}
The next improving point to enter is $\qpoint{d}$.
\end{claim}

\begin{claimproof}
\details{Old version:
From \cref{lem:secondround}, taking $P= \Pset{d-2}$, $A$ equal to its linear span and $Q$ the points $p_d,q_d,r_d,s_d$, we can conclude that the next point which may be available to enter is $q_d$, since nothing in $\Pset{d-2}$ is available. The first thing to observe is that $\qpoint{d}$ is closer to the origin than $\rpoint{d},\spoint{d}$, so it is enough to 
check that $\qpoint{d}$ is an improving point per Wolfe's criterion.
From \cref{lem:corralpluspoint} we know the optimal point $\ve{y}$ in the corral $\Oset{d-2} \ppoint{d}$ 
explicitly  in terms of the optimal point $\xstar{d-2}$ of $\Oset{d-2}$ and $\ppoint{d}$, namely 
$\ve{y}$ is a convex combination $\lambda \xstar{d-2}+ (1-\lambda) \ppoint{d}$, with $\lambda=\frac{\enorms{\ppoint{d}} - {\ppoint{d}}^T\xstar{d-2}}{\enorms{\ppoint{d}-\xstar{d-2}}}$. Using Wolfe's criterion 
we can decide whether point $\qpoint{d}$ is a candidate to enter the corral. Compute



}
We first check that no point in $\Pset{d-2}$ can enter. 
From \cref{lem:corralpluspoint} we know the optimal point $\ve{y}$ in corral $\Oset{d-2} \ppoint{d}$ 
explicitly  in terms of the optimal point $\xstar{d-2}$ of $\Oset{d-2}$ and $\ppoint{d}$, namely 
$\ve{y}$ is a convex combination $\lambda \xstar{d-2}+ (1-\lambda) \ppoint{d}$, with $\lambda=\frac{\enorms{\ppoint{d}} - {\ppoint{d}}^T\xstar{d-2}}{\enorms{\ppoint{d}-\xstar{d-2}}}$. 
Let $\ve{p} \in \Pset{d-2}$. We check that it cannot enter via Wolfe's criterion. We compute $\ve{p}^T \ve{y}$ in two steps:
First project $\ve{p}$ onto $\linspan{ \xstar{d-2}, \ppoint{d}}$ (a subspace that contains $\ve{y}$).
This projection is longer than $\xstar{d-2}$ by optimality of $\xstar{d-2}$.
Then project onto $\ve{y}$.
This shows that $\ve{p}^T \ve{y} \geq {\xstar{d-2}}^T \ve{y} = \enorms{\ve{y}}$ and $\ve{p}$ cannot enter as it is not an improving point according to Wolfe's criterion.

By construction, $\qpoint{d}$ is closer to the origin than $\rpoint{d},\spoint{d}$, so to conclude it is enough to 
check that $\qpoint{d}$ is an improving point per Wolfe's criterion.
Compute




\begin{align*}
 \ve{y}^T\qpoint{d} 
 &= \lambda(\xstar{d-2})^T\qpoint{d} + (1-\lambda) \ppoint{d}^T\qpoint{d} \\
 &\leq \frac{\lambda}{2} \enorms{\xstar{d-2}} + (1-\lambda)\left[\frac{1}{4} \enorms{\xstar{d-2}} + \frac{1}{16}\enorms{\xstar{d-2}} -\M{d-2}^2-\M{d-2}\right] \\
 &\leq \frac{\lambda}{2} \enorms{\xstar{d-2}} 
\end{align*}
since by construction $\M{d-2} \geq 1$ and $\enorm{\xstar{d-2}} \leq 1$.
On the other hand,
\begin{align*}
\enorms{\ve{y}} 
&= \lambda^2 \enorms{\xstar{d-2}} + 
(1-\lambda)^2 \enorms{\ppoint{d}} +
2\lambda(1-\lambda)\frac{1}{2}\enorms{\xstar{d-2}} \\
&= \lambda \enorms{\xstar{d-2}} + 
(1-\lambda)^2 \enorms{\ppoint{d}} \\
&\geq \lambda \enorms{\xstar{d-2}}.
\end{align*}
Thus, $\ve{y}^T\qpoint{d} < \norms{\ve{y}}$, that is, $\qpoint{d}$ is an improving point.
%
\end{claimproof}

\begin{claim}
The current set of points, $\Oset{d-2} \cup \{\ppoint{d}, \qpoint{d}\}$, is not a corral. 
Points in $\Oset{d-2}$ leave one by one. The next corral is $\ppoint{d} \qpoint{d}$. 
\end{claim}
\begin{claimproof}
Instead of analyzing the iterations of Wolfe's inner loop, we use the key fact,
from \cref{Wolfeintro}, that the inner loop must end with a corral whose distance to 
the origin is strictly less than the previous corral. We look at the alternatives:
This new corral cannot be $\Oset{d-2} \cup \{\ppoint{d}\}$ (the previous corral)  or any 
subset of it because it would not decrease the distance.
An analysis similar to that of \cref{claim:star} or basic trigonometry (in three-dimensions) 
shows that $\Oset{d-2} \cup \{\qpoint{d}\}$  is a corral whose distance to the origin 
is larger than the distance for 
$\Oset{d-2} \cup \{\ppoint{d}\}$. See \cref{jamie1}, where we show a projection, the perpendicular line segments to $\conv(\Oset{d-2},\ppoint{d})$ and $\conv(\Oset{d-2},\qpoint{d})$ are shown in dotted line after projection.  
Thus, the new corral cannot be $\Oset{d-2} \cup \{\qpoint{d}\}$ or any subset of it.

No set of the form $S \cup \{\ppoint{d}, \qpoint{d}\}$ with $S \subseteq \Oset{d-2}$ and 
$S$ non-empty can be a corral: 
To see this, first note that the minimum norm point in $\conv(S \cup \{\ppoint{d}, \qpoint{d}\})$ 
is in the segment $[\ppoint{d}, \qpoint{d}]$, specifically, point $(\xstar{d-2}/2, \m{d-2}/4,0)$ 
(minimality follows from Wolfe's criterion, Lemma \ref{wolfec}).
This implies that the minimum norm point in $\aff (S \cup \{\ppoint{d}, \qpoint{d}\})$ cannot be in the relative interior of $\conv(S \cup \{\ppoint{d}, \qpoint{d}\})$ when $S$ is non-empty (see Figure \ref{jamie1star}).

The only remaining non-empty subset is $\{\ppoint{d}, \qpoint{d}\}$, which is the new corral.
\end{claimproof}

\begin{figure}
\begin{center}
\includegraphics[scale=0.3]{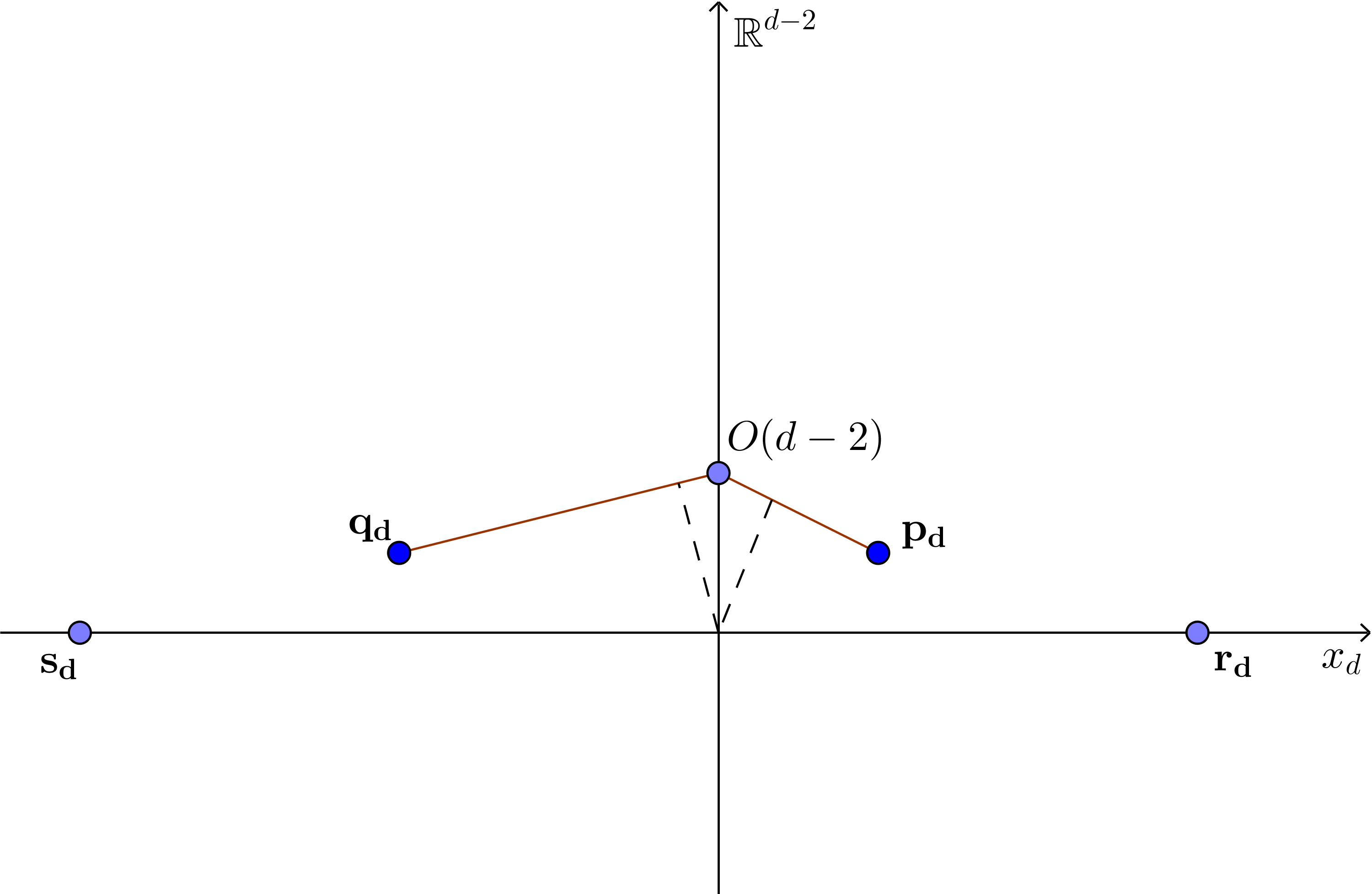}
\end{center}
\caption{A projection of the point set in the direction of $x_{d-1}$. Any corral of the form $S\qpoint{d}$ where $S \subset \Oset{d-2}$ would have distance larger than the previous corral, $\Oset{d-2}\ppoint{d}$.}\label{jamie1}
\end{figure}

\begin{figure}
\includegraphics[scale=0.73]{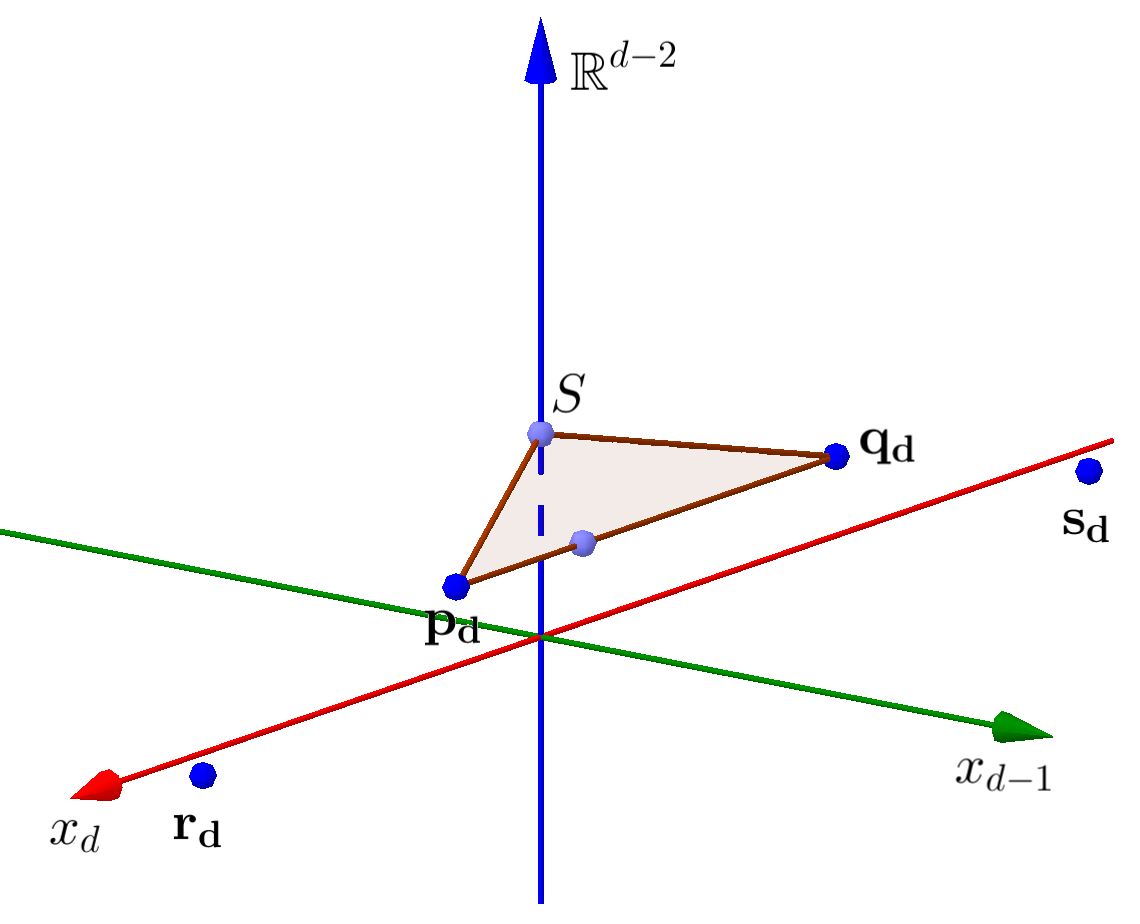}
\includegraphics[scale=0.29]{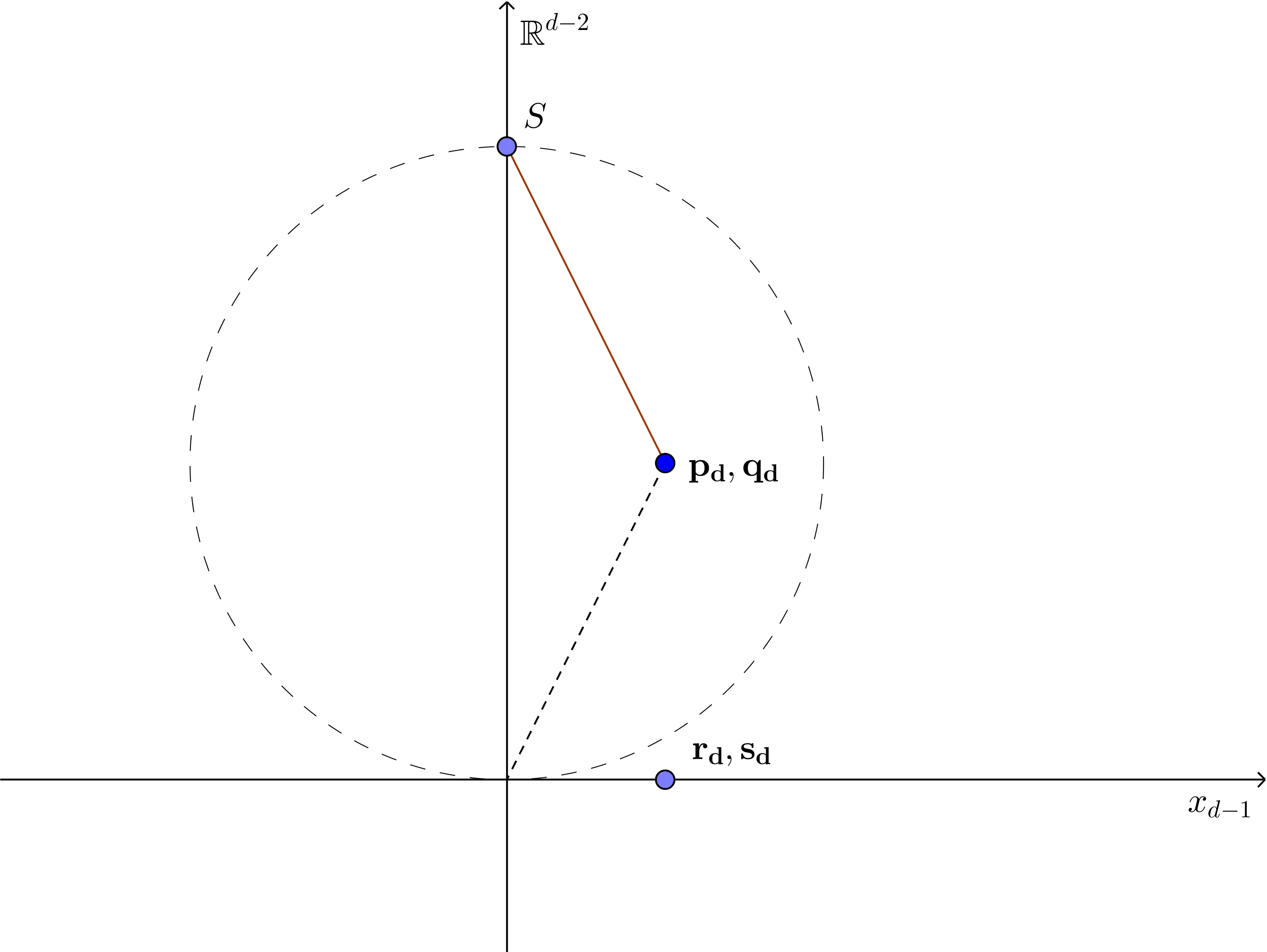}
\caption{The minimum norm point in $\conv(S \cup \{\ppoint{d}, \qpoint{d}\})$ is in the line segment between $\ppoint{d}$ and $\qpoint{d}$.}\label{jamie1star}
\end{figure}

\begin{claim}
The set of improving points is now $\{\rpoint{d}, \spoint{d}\}$. 
\end{claim}
\begin{claimproof}
Recall that the optimal point in corral $\{ \ppoint{d},\qpoint{d} \}$ has coordinates $(\xstar{d-2}/2, \m{d-2}/4,0)$. 
Thus, when computing distances and checking Wolfe's criterion it is enough to do so in the two-dimensional situation 
depicted in Figure \ref{jamie2}. Thus,
a hyperplane orthogonal to the segment from the origin to $(\xstar{d-2}/2, \m{d-2}/4,0)$ is shown in the figure. It leaves the points in $\Pset{d-2}$ above and both $\rpoint{d}$ and $\spoint{d}$ below making them the only improving points.
\end{claimproof}

\begin{figure}
\begin{center}
\includegraphics[scale=0.3]{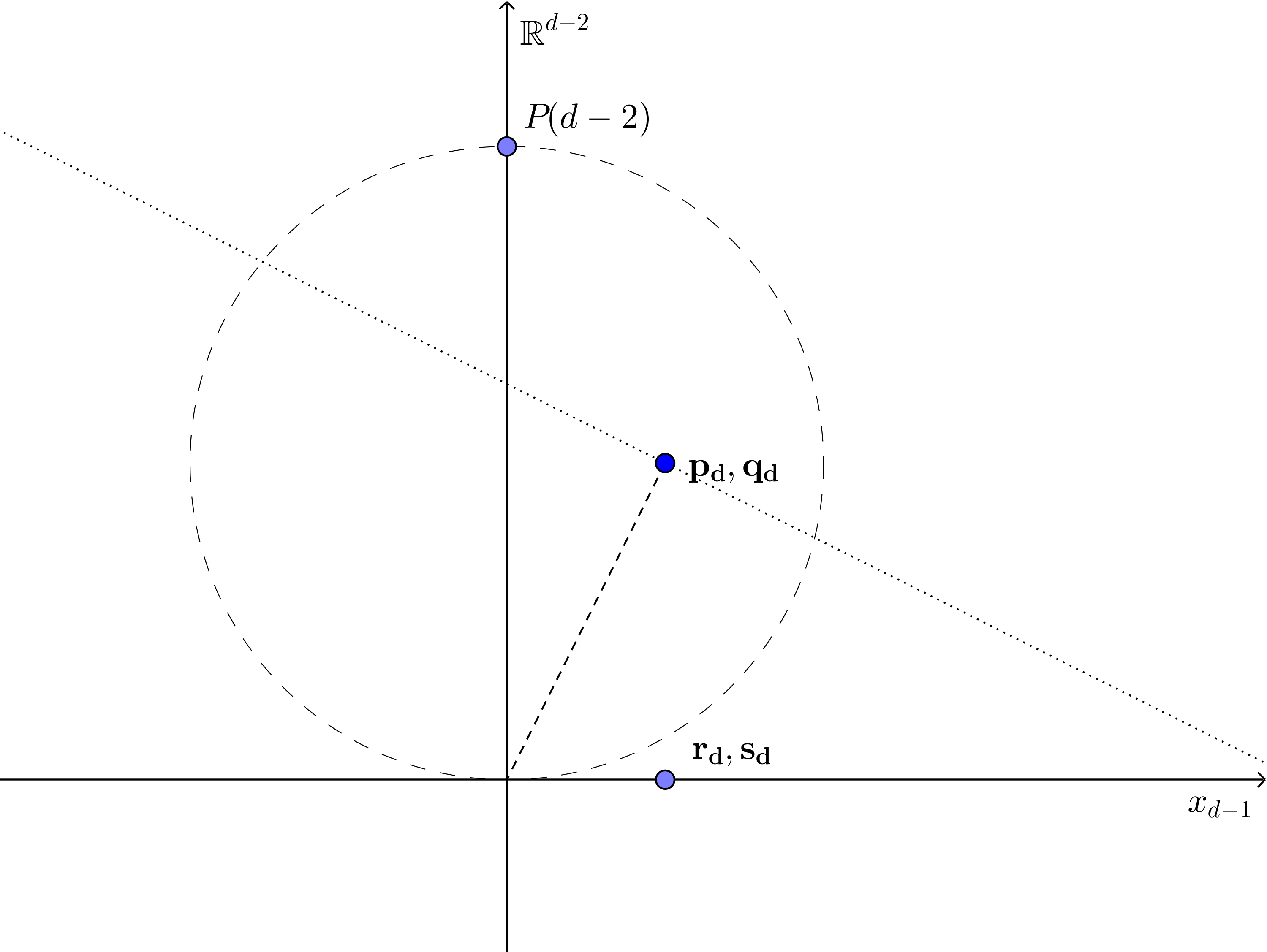}
\end{center}
\caption{The set of improving points is now $\{\rpoint{d},\spoint{d}\}$.}\label{jamie2}
\end{figure}

Point $\rpoint{d}$ enters since it has smallest norm.
\begin{claim}
Point $\ppoint{d}$ leaves and the next corral is $\qpoint{d} \rpoint{d}$. 
\end{claim}
\begin{claimproof}
To start, notice that by construction the four points $\ppoint{d},\qpoint{d},\rpoint{d},\spoint{d}$ lie on a common hyperplane, $L$, parallel to the subspace spanned by
$\xstar{d-2}$. Thus, one does not need to do distance calculations but rather  \cref{jamie1prime} is a faithful representation of the positions of points. The origin is facing the hyperplane $L$, parallel to $\linspan{\xstar{d-2}}$.  The closest point to the origin 
within $L$ is in the line segment joining $\rpoint{d},\spoint{d}$ thus, as we move vertically, the closest point to the origin in triangle $\ppoint{d},\qpoint{d},\rpoint{d}$ must be in the line segment joining 
$\rpoint{d}$ and $\qpoint{d}$.
\end{claimproof}

\begin{figure}
\begin{minipage}[t]{0.49\textwidth}
\begin{center}
\includegraphics[scale=0.28]{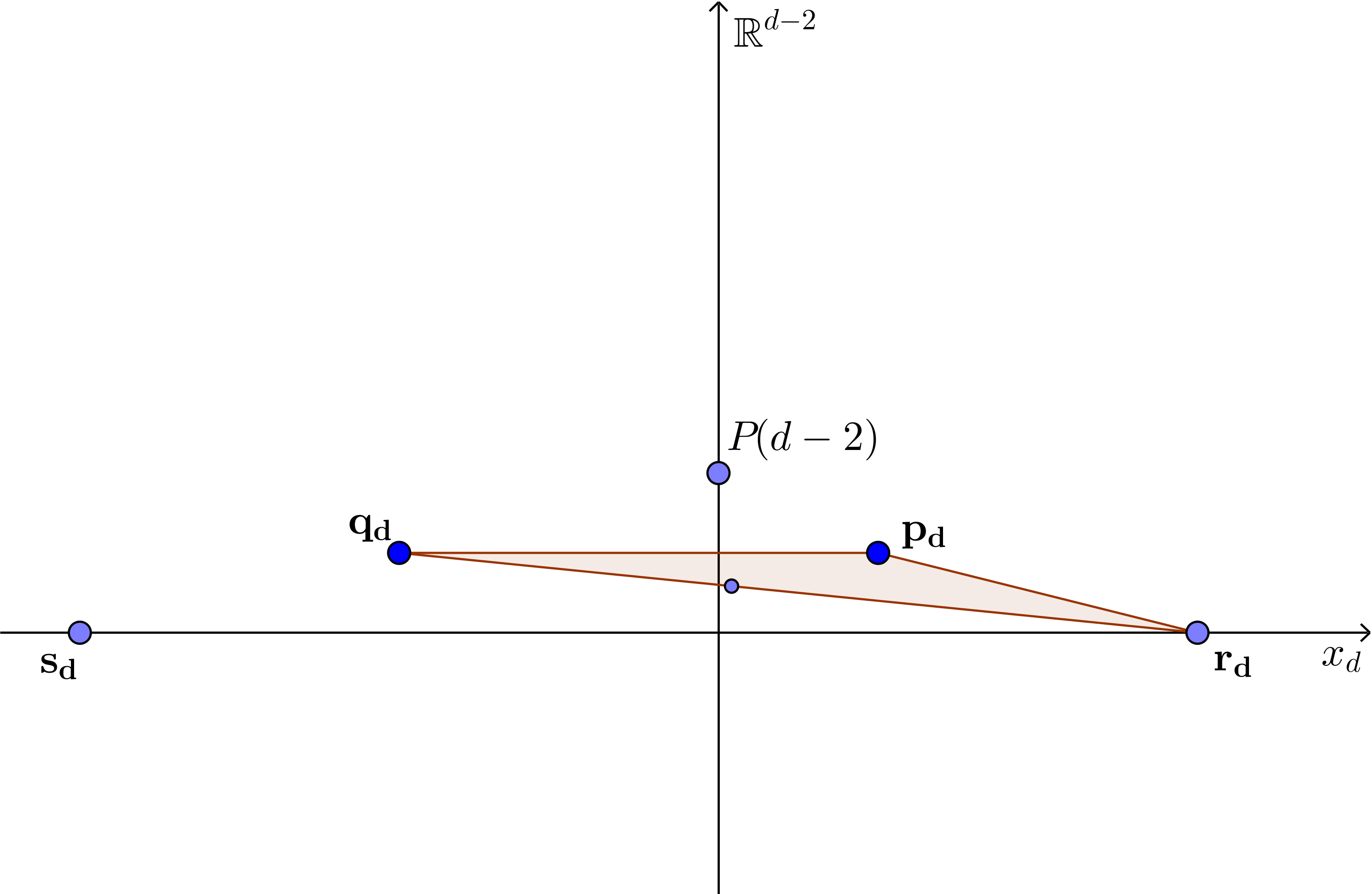}
\end{center}
\caption{The set $\{\ppoint{d},\qpoint{d},\rpoint{d}\}$ is not a corral.}\label{jamie1prime}
\end{minipage}
\begin{minipage}[t]{0.49\textwidth}
\begin{center}
\includegraphics[scale=0.25]{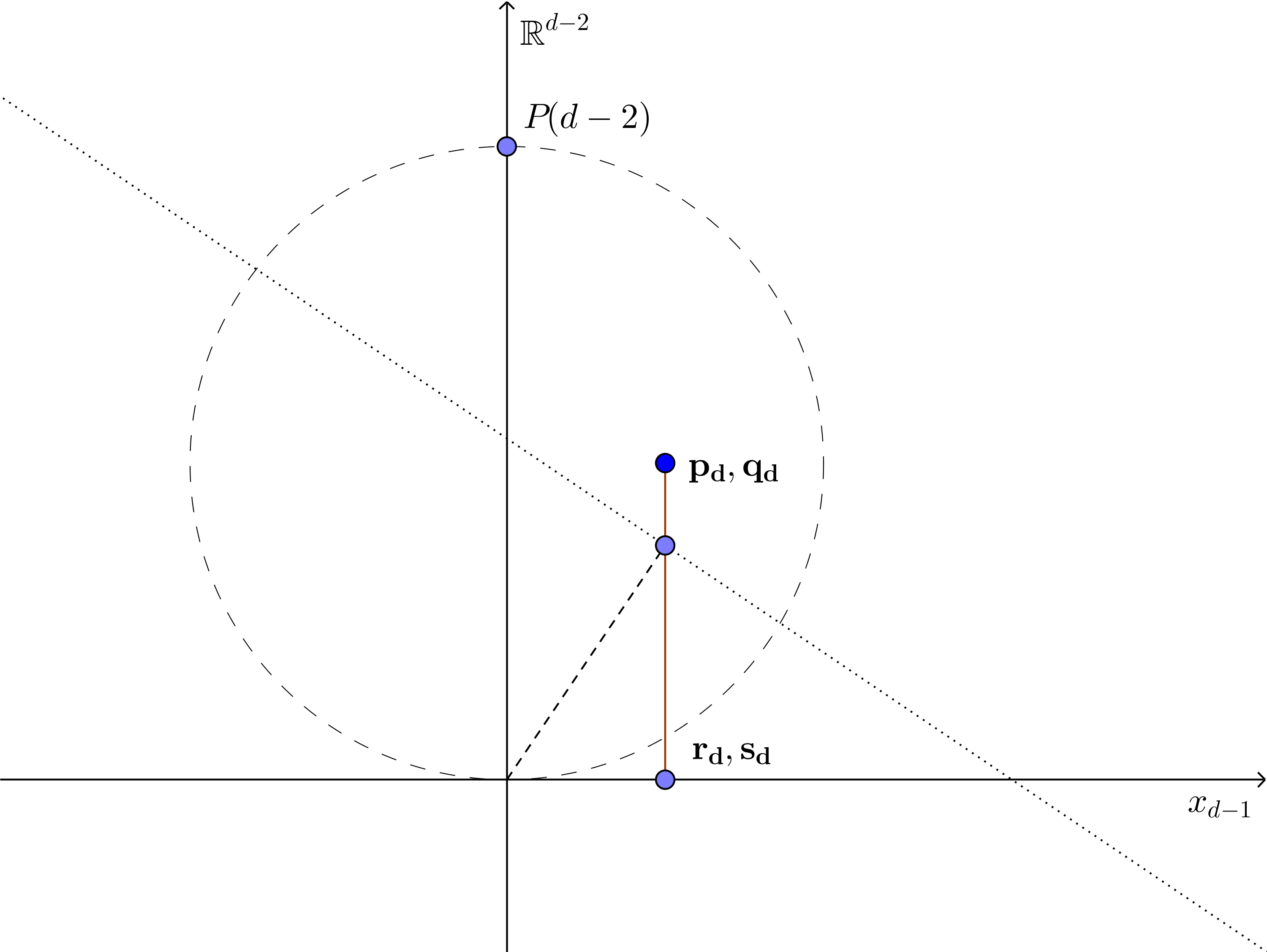}
\end{center}
\caption{The only improving point is $\spoint{d}$.}\label{jamietwoprime}
\end{minipage}
\end{figure}

\begin{claim}
The only improving point now is $\spoint{d}$. 
\end{claim}
\begin{claimproof}
Once more we rely in two different orthogonal two-dimensional projections of $\Pset{d}$ to
estimates distances and to check Wolfe's criterion. The line segment  from the origin to the optimum of the corral $\qpoint{d},\rpoint{d}$ (we
could calculate this exactly, but it is not necessary), and
its orthogonal hyperplane are shown in Figure \ref{jamietwoprime}.  This shows only $\rpoint{d}$ or $\spoint{d}$ are
candidates but $\rpoint{d}$ is in the current corral, so only $\spoint{d}$ may be added.
\end{claimproof}

Point $\spoint{d}$ enters as the closest improving point to the origin.
\begin{claim}
Point $\qpoint{d}$ leaves.
The next corral is $\rpoint{d} \spoint{d}$. 
\end{claim}
\begin{claimproof}
We wish to find the closest point to the origin in triangle $\qpoint{d},\rpoint{d},\spoint{d}$. From
Figure \ref{jamiethree} the optimum is between $\rpoint{d},\spoint{d}$; this point is $(\m{d-2}/4) \ve{e_{d-1}}$. Clearly
this point is below $\qpoint{d}$, so it must leave the corral. 
\end{claimproof}

\begin{figure}
\begin{minipage}{0.49\textwidth}
\begin{center}
\includegraphics[scale=0.28]{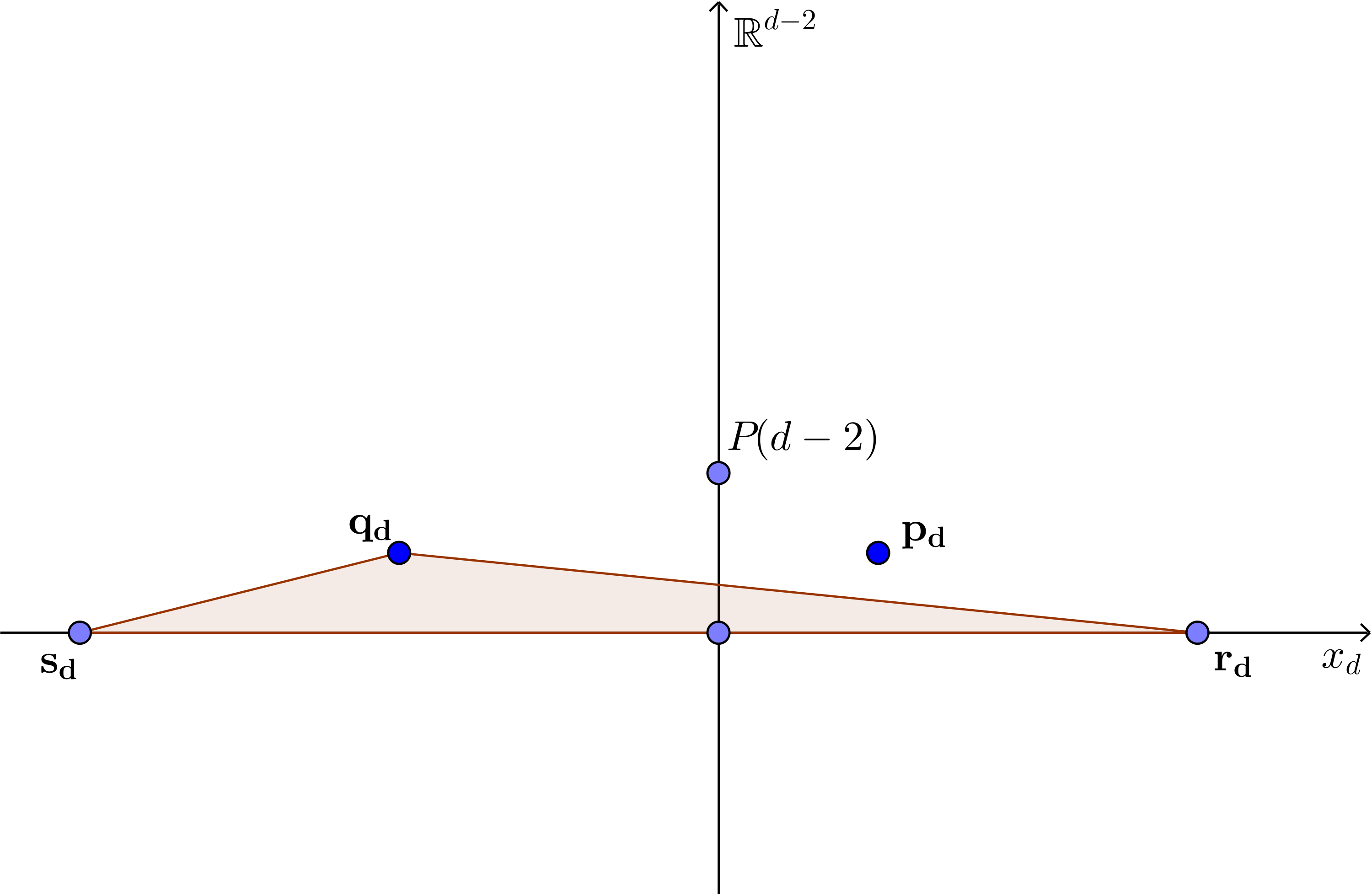}
\end{center}
\caption{The point $\qpoint{d}$ leaves.}\label{jamiethree}
\end{minipage}
\begin{minipage}{0.49\textwidth}
\begin{center}
\includegraphics[scale=0.24]{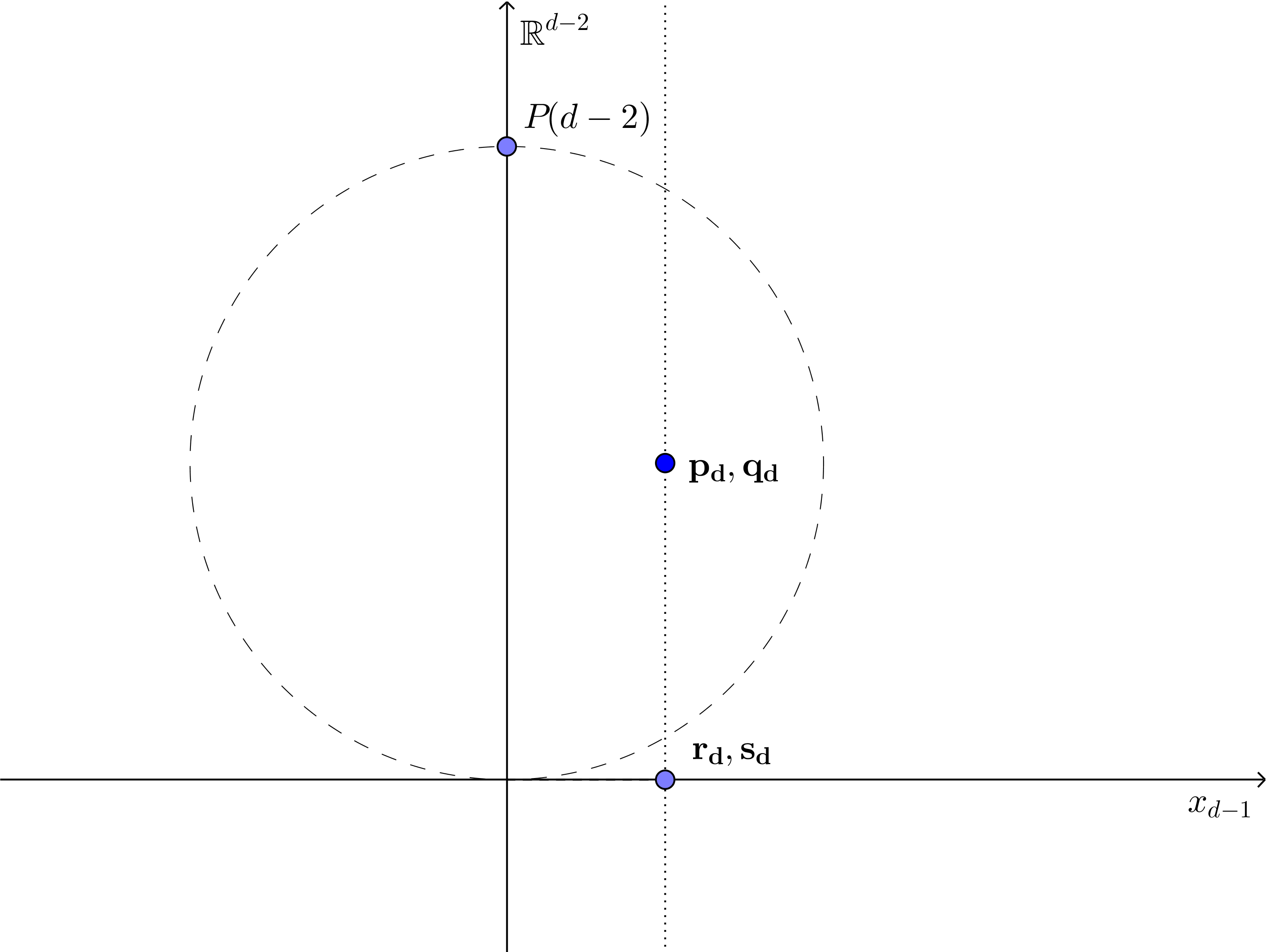}
\end{center}
\caption{The improving points are $\Pset{d-2}$.}\label{jamie3b}
\end{minipage}
\end{figure}

\begin{claim}
The set of improving points is now $\Pset{d-2}$ (with two zero coordinates appended).
\end{claim}
\begin{claimproof}
Now Wolfe's criterion hyperplane contains the four points $\ppoint{d},\qpoint{d},\rpoint{d},\spoint{d}$ by construction leaving $\Pset{d-2}$ on the same
side as the origin (see Figure \ref{jamie3b}).
\end{claimproof}

The first (and minimum norm) point in $\Pset{d}$ enters and the next corral is this point together with $\rpoint{d}$ and $\spoint{d}$.
That is, the next corral is precisely the first corral in $\Cset{d-2} \rpoint{d} \spoint{d}$. 
We will prove inductively that the sequence of corrals from now on is exactly all of $\Cset{d-2} \rpoint{d} \spoint{d}$.
To see this, we repeatedly invoke \cref{lem:secondround} after every corral with $A$ equal to the subspace spanned by the first 
$d-2$ coordinate vectors of $\RR^d$.
Suppose that the current corral is $C \rpoint{d} \spoint{d}$, where $C$ is one of the corrals in $\Cset{d-2}$ and denote the next corral in $\Cset{d-2}$ by $C'$.
From \cref{lem:secondround}, we get that the set of improving points for corral $C \rpoint{d} \spoint{d}$ is obtained by taking the set of improving points 
for corral $C$ and removing $\{\ppoint{d}, \qpoint{d}, \rpoint{d}, \spoint{d}\}$.
Thus, the point that enters is the same that would enter after corral $C$. Let $\ve{a}$ denote that point.
\begin{claim}
The next corral is $C'\rpoint{d}\spoint{d}$.
\end{claim}
\begin{claimproof}
The current set of points is $C\rpoint{d} \spoint{d} \ve{a} $.
If $C \ve{a} $ is a corral then so is $C\rpoint{d} \spoint{d} \ve{a} = C' \rpoint{d} \spoint{d}$ (by \cref{lem:orthogonal}, part 3) and the claim holds.
If $C \ve{a} $ is not a corral, it is enough to prove that the sequence of points removed by the inner loop of Wolfe's method on this set is the same as the sequence on set $C\rpoint{d} \spoint{d} \ve{a}$.
We will show this now by simultaneously analyzing the execution of the inner loop on $C \ve{a} $ and $C\rpoint{d} \spoint{d} \ve{a}$. 
We distinguish the two cases with the following notation: variables are written without a bar ($\bar{\phantom{x}}$) and with a bar, respectively.

Let $\ve{x_1}, \dotsc, \ve{x_k}$ be the sequence of current points constructed by the inner loop on $C \ve{a}$. 
Let $\ve{p_1}, \dotsc, \ve{p_k}$ be the sequence of removed points.
Let $C_1, \dotsc, C_k$ be the sequence of current sets of points at every iteration.
Let $\ve{\bar x_1}, \dotsc, \ve{\bar x_{\bar k}}$ be the corresponding sequence on $C \rpoint{d} \spoint{d} \ve{a}$.
Let $\ve{\bar p_1}, \dotsc, \ve{\bar p_{\bar k}}$ be the corresponding sequence of removed points.
Let $\bar C_1, \dotsc, \bar C_{\bar k}$ be the corresponding sequence of current sets of points.
We will show inductively: $k = \bar k$, there is a one-to-one correspondence between sequences $(\ve{x_i})$ and $(\ve{\bar x_i})$, and $(\ve{p_i}) = (\ve{\bar p_i})$.
More specifically, the correspondence is realized by maintaining the following invariant in the inner loop: $\ve{\bar x_i}$ is a strict convex combination of $\ve{x_i}$ and the minimum norm point in $[\rpoint{d}, \spoint{d}]$.

For the base case, from \cref{lem:orthogonal}, part 2, we have that $\ve{\bar x_1}$ is a strict convex combination of $\ve{x_1}$ (which is the minimum norm point in $\conv C$) and the minimum norm point in segment $[\rpoint{d}, \spoint{d}]$, specifically $\ve{w} := \frac{\m{d-2}}{4}\ve{e_{d-1}}$.

For the inductive step, if $\ve{x_i}$ is a strict convex combination of the current set of points $C_i$, then so is $\ve{\bar x_i}$ of $\bar C_i$ and the inner loop ends in both cases with corrals $C_i = C'$ and $\bar C_i = C' \rpoint{d} \spoint{d}$, respectively.
The claim holds.
If $\ve{x_i}$ is not a strict convex combination of the current set of points $C_i$, then neither is $\ve{\bar x_i}$ of $\bar C_i$.
The inner loop then continues by computing the minimum norm point $\ve{y}$ in $\aff C_i$ and $\ve{\bar y}$ in $\aff \bar C_i$, respectively. 
It then finds point $\ve{z}$ in $\conv C_i$ that is closest to $\ve{y}$ in segment $[\ve{x_i}, \ve{y}]$. 
It finds $\ve{\bar z}$, respectively. 
It then selects a point $\ve{p_i}$ to be removed, and a point $\ve{\bar p_i}$, respectively.
From \cref{lem:orthogonal}, part 2, we have that $\ve{\bar y}$ is a strict convex combination of $\ve{y}$ and $\ve{w}$.

We will argue that $\ve{\bar z}$ is a strict convex combination of $\ve{z}$ and $\ve{w}$.
To see this, we note that segment $[\ve{\bar x_i}, \ve{\bar y}]$ lies in the hyperplane where the last coordinate is 0.
Therefore we only need to intersect it with the part of $\conv \bar C_i$ that lies in that hyperplane.
This part is exactly $\conv (C_i \cup \{\ve{w}\})$, which can be written in a more explicit way as the union of all segments of the form $[\ve{b},\ve{w}]$ with $\ve{b} \in C_i$.
Even more, we only need to look at triangle $\ve{w}, \ve{x_i}, \ve{y}$, as all relevant segments lie on it.
The intersection of this triangle with $\conv C_i$ is segment $[\ve{x_i}, \ve{z}]$ and therefore the intersection of the triangle with $\conv \bar C_i$ is simply triangle $\ve{x_i}, \ve{z}, \ve{w}$.
This implies that the intersection between segment $[\ve{\bar x_i}, \ve{\bar y}]$ and $\conv \bar C_i$ is the same as the intersection between  segment $[\ve{\bar x_i}, \ve{\bar y}]$ and triangle $\ve{x_i}, \ve{z}, \ve{w}$. 
This intersection is an interval $[\ve{\bar x_i}, \ve{\bar z}]$ where $\ve{\bar z}$ is a strict convex combination of $\ve{w}$ and $\ve{z}$ and $\ve{\bar z}$ is the closest point to $\ve{\bar y}$ in that intersection.

It follows that the set of potential points to be removed is the same for the two executions. 
Specifically, if $\ve{z}$ is a strict convex combination of a certain subset $C^*$ of $C_i$, then $\ve{\bar z}$ is a strict convex combination of $C^* \cup \{\rpoint{d}, \spoint{d}\}$.
The sets of points that can potentially be removed are $C_i \setminus C^*$ and $\bar C_i \setminus (C^* \cup \{\rpoint{d}, \spoint{d}\}) = C_i \setminus C^*$ (the same), respectively.
In particular\footnote{Under a mild consistency assumption on the way a point is chosen when there is more than one choice, for example, ``choose the point with smallest index among potential points.''}, $\ve{p_i} = \ve{\bar p_i}$.
This implies $C_{i+1} = \bar C_{i+1}$.
Also, $\ve{x_{i+1}} = \ve{z}$ and $\ve{\bar x_{i+1}} = \ve{\bar z}$ is in $[\ve{x_{i+1}}, \ve{w}]$.
This completes the inductive argument about the inner loop and proves the claim.
\end{claimproof}
This completes the proof of \cref{thm:lowerbound}.
\end{proof}




\section{Linear optimization reduces to minimum-norm problems on simplices}

We reduce linear programming to the minimum norm point problem over a simplex via a series of strongly polynomial time reductions.  
The algorithmic problems we will consider are defined below.  
\ifnum\version=\stocversion
\else
We give definitions for the problems of linear programming (LP), feasibility (FP), bounded feasibility (BFP), V-polytope membership (VPM), zero V-polytope membership (ZVPM), zero V-polytope membership decision (ZVPMD), distance to a V-polytope (DVP) and distance to a V-simplex (DVS). (Prefix ``V-" means that the respective object is specified as the convex hull of a set of points.) See \cite{schrijver98, lovasz1988geometric, MR1956924} for a detailed discussions of strongly polynomial time algorithms.
\fi

\begin{definition}
Consider the following computational problems:
\begin{itemize}
\item 
\textbf{\textup{LP:}} 
Given a rational matrix $A$, a rational column vector $\ve{b}$, and a rational row vector $\ve{c}^T$, output rational $\ve{x} \in \text{argmax}\{\ve{c}^T\ve{x} \suchthat A\ve{x} \le \ve{b}\}$ if $\max\{\ve{c}^T\ve{x} \suchthat A\ve{x} \le \ve{b}\}$ is finite, otherwise output INFEASIBLE if $\{\ve{x} \suchthat A\ve{x} \le \ve{b}\}$ is empty and else output INFINITE.

\ifnum\version=\stocversion
\else

\item 
\textbf{\textup{FP:}} 
Given a rational matrix $A$ and a rational vector $\ve{b}$, if $P:= \{\ve{x} \suchthat A\ve{x} = \ve{b}, \ve{x} \ge \ve{0}\}$ is nonempty, output a rational $\ve{x} \in P$, otherwise output NO.

\item
\textbf{\textup{BFP:}} 
Given a rational $d \times n$ matrix $A$, a rational vector $\ve{b}$ and a rational value $M > 0$, if $P:= \{\ve{x} \suchthat A\ve{x} = \ve{b}, \ve{x} \ge \ve{0}, \sum_{i=1}^n x_i \le M\}$ is nonempty, output a rational $\ve{x} \in P$, otherwise output NO.

\item
\textbf{\textup{VPM:}} 
Given a rational $d \times n$ matrix $A$ and a rational vector $\ve{b}$, if $P:=\{\ve{x} \suchthat A\ve{x}=\ve{b}, \ve{x} \ge \ve{0}, \sum_{i=1}^n x_i = 1\}$ is nonempty, output a rational $\ve{x} \in P$, otherwise output NO.

\item
\textbf{\textup{ZVPM:}} 
Given a rational $d \times n$ matrix $A$, if $P:=\{\ve{x} \suchthat A\ve{x} = \ve{0}, \ve{x} \ge \ve{0}, \sum_{i=1}^n x_i = 1\}$ is nonempty, output a rational $\ve{x} \in P$, otherwise output NO.

\item
\textbf{\textup{ZVPMD:}} 
Given rational points $\ve{p_1}, \ve{p_2}, \dotsc,\ve{p_n} \in \mathbb{R}^d$, output YES if $\ve{0} \in \conv\{\ve{p_1},\ve{p_2},...,\ve{p_n}\}$ and NO otherwise.

\item
\textbf{\textup{DVP:}} 
Given rational points $\ve{p_1}, \ve{p_2}, \dotsc,\ve{p_n} \in \mathbb{R}^d$ defining $P = \conv\{\ve{p_1}, \ve{p_2}, ..., \ve{p_n}\}$, output $d(\ve{0},P)^2$.\details{It is not assumed that the points are all vertices.}

\fi 

\item
\textbf{\textup{DVS:}} 
Given $n \leq d+1$ affinely independent rational points $\ve{p_1}, \ve{p_2},...,\ve{p_{n}} \in \mathbb{R}^d$ defining $(n-1)$-dimensional simplex $P = \conv\{\ve{p_1}, \ve{p_2}, ..., \ve{p_{n}}\}$, output $d(\ve{0},P)^2$.
\end{itemize}
\end{definition}

The main result in this section reduces linear programming to finding the minimum norm point in a (vertex-representation) simplex.
\begin{theorem}
LP reduces to DVS in strongly-polynomial time.
\end{theorem}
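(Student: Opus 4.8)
The plan is to prove the theorem as a single composition of a chain of strongly-polynomial reductions,
\[
\mathrm{LP} \to \mathrm{FP} \to \mathrm{BFP} \to \mathrm{VPM} \to \mathrm{ZVPM} \to \mathrm{ZVPMD} \to \mathrm{DVP} \to \mathrm{DVS},
\]
and then to invoke the standard fact that a composition of finitely many strongly-polynomial reductions is again strongly polynomial (the total number of arithmetic operations stays polynomially bounded in the number of input data, and every intermediate number retains polynomial bit-size). So the work is to exhibit each arrow.

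The first several arrows are essentially linear-algebra bookkeeping. For $\mathrm{LP}\to\mathrm{FP}$ I would put $A\ve{x}\le \ve{b}$ into equality standard form (introduce slacks $\ve{s}=\ve{b}-A\ve{x}\ge\ve{0}$ and split free variables $\ve{x}=\ve{x}^+-\ve{x}^-$), then use strong duality: one $\mathrm{FP}$ call on the primal detects the INFEASIBLE case, one $\mathrm{FP}$ call on the dual detects the INFINITE case, and when both are feasible a single $\mathrm{FP}$ call on the combined primal-dual system augmented with $\ve{c}^T\ve{x}=\ve{b}^T\ve{y}$ returns an optimal $\ve{x}$. For $\mathrm{FP}\to\mathrm{BFP}$ I would supply a bound $M$ on $\sum_i x_i$ that is valid whenever the feasible region is nonempty; such an $M$ is realized at a basic feasible solution and, by Hadamard/Cramer bounds on the subdeterminants of $(A\mid\ve{b})$, can be written with polynomially many bits (this is the one spot where I must check that passing to $M$ keeps all sizes polynomial). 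Then $\mathrm{BFP}\to\mathrm{VPM}$ homogenizes: add a slack coordinate $x_{n+1}=M-\sum_i x_i\ge 0$ and rescale $\ve{z}=\ve{x}/M$ so that $\sum_i z_i=1$ and the equalities become $(MA\mid \ve{0})\ve{z}=\ve{b}$; $\mathrm{VPM}\to\mathrm{ZVPM}$ translates the columns by $-\ve{b}$, replacing $A$ by $A-\ve{b}\,\mathbf{1}^T$ so that $A\ve{x}=\ve{b},\ \sum_i x_i=1$ becomes $(A-\ve{b}\,\mathbf{1}^T)\ve{x}=\ve{0}$ with the same $\ve{x}$. Finally $\mathrm{ZVPM}\to\mathrm{ZVPMD}$ is a search-to-decision reduction: by Carath\'eodory's theorem I would peel the columns to an inclusion-minimal affinely independent subset $S$ with $\ve{0}\in\conv S$, repeatedly dropping a candidate point and calling the decision oracle to test whether $\ve{0}$ survives in the hull; after $O(n)$ oracle calls $S$ is affinely independent and one Cramer solve returns rational barycentric coordinates. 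The arrow $\mathrm{ZVPMD}\to\mathrm{DVP}$ is immediate, since $\ve{0}\in P$ iff $d(\ve{0},P)^2=0$.

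The main obstacle is the last arrow, $\mathrm{DVP}\to\mathrm{DVS}$: compute $d(\ve{0},\conv\{\ve{p_1},\dotsc,\ve{p_n}\})^2$ for arbitrary (possibly non-vertex, possibly affinely dependent) points using only an oracle that returns the squared distance to a simplex given by \emph{affinely independent} points. By Carath\'eodory the minimizer $\ve{x}^*$ lies in the relative interior of $\conv S$ for some affinely independent corral $S$, and for that $S$ the affine minimizer (computable by plain linear algebra, with no oracle) coincides with the convex minimizer, so $d(\ve{0},P)^2=d(\ve{0},\conv S)^2$, which is a legal $\mathrm{DVS}$ value. The difficulty is to \emph{locate} such an $S$ with only polynomially many oracle calls: the naive active-set search is exactly Wolfe's method, which the first part of this paper shows can take exponentially many steps, so it cannot be used directly. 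My plan is instead to peel the point set down to an affinely independent set while provably preserving the value $d(\ve{0},\cdot)^2$, certifying at each step (via an oracle call on an affinely independent subset, hence a legal $\mathrm{DVS}$ instance) that the chosen removal does not increase the distance; Carath\'eodory guarantees a sequence of $O(n)$ safe removals exists.

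I expect the genuine technical tension to be precisely that the most natural ``does dropping this point change the distance?'' test is a general-polytope ($\mathrm{DVP}$) comparison, whereas the allowed oracle is simplex-only ($\mathrm{DVS}$); reconciling these, so that every safe removal can be both found and certified using a simplex oracle plus strongly-polynomial linear algebra, is where the strong polynomiality of the whole chain is won or lost. A fallback worth keeping in mind is the lift $\ve{p_i}\mapsto(\ve{p_i},\epsilon\,\ve{e_i})\in\RR^{d+n}$, which renders the points affinely independent (a genuine $\mathrm{DVS}$ instance) at the cost of an $O(\epsilon^2)$ perturbation of the squared distance; recovering the exact rational $d(\ve{0},P)^2$ would then require choosing $\epsilon$ of small enough (polynomial) bit-size and rounding against an a priori denominator bound, which is delicate but may be more transparent than the peeling argument.
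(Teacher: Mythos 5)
Your chain coincides with the paper's through ZVPMD: the paper proves exactly the arrows LP $\to$ FP $\to$ BFP $\to$ VPM $\to$ ZVPM $\to$ ZVPMD, with the same constructions you sketch (slacks and variable splitting plus a primal--dual feasibility system, Cramer/Hadamard bounds for $M$, homogenization and rescaling, translation of the columns by $-\ve{b}$, and Carath\'eodory peeling with the decision oracle). The gap is in your last leg. You route the argument through DVP, i.e.\ through \emph{exact} computation of $d(\ve{0},P)^2$ for an arbitrary V-polytope, and your primary plan for DVP $\to$ DVS --- peel points while certifying via oracle calls that the distance is preserved --- is not a construction: you give no rule for selecting a safe point to remove, and no way to certify a removal using only DVS queries, since the subsets whose distances you would need to compare are in general not affinely independent and hence are not legal DVS instances. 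As you yourself observe, this is precisely the combinatorial search whose difficulty the first half of the paper establishes; nothing in the proposal resolves that tension.

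The fix, and what the paper actually does, is to notice that DVP is never needed: ZVPMD is a \emph{decision} problem, so it reduces directly to DVS by your ``fallback'' lift with no exact recovery and no rounding. Lift the points to an affinely independent set using new coordinates scaled by $\eps$ (the paper builds a Gram--Schmidt basis of the orthogonal complement to keep the dimension low; your lift $\ve{p_i} \mapsto (\ve{p_i}, \eps\ve{e_i})$ works just as well). Writing $Q$ for the original hull and $S$ for the lifted simplex, one has $d(\ve{0},Q)^2 \le d(\ve{0},S)^2 \le d(\ve{0},Q)^2 + \eps^2 n$, the lower bound because projection onto the original coordinates maps $\conv S$ into $Q$. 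The one quantitative ingredient is a lower bound on the distance in the NO case: by facet-complexity bounds (the paper's \cref{lem:mindist}), $\ve{0} \notin Q$ implies $d(\ve{0},Q)^2 \ge \frac{1}{d(dT)^{2d}}$, where $T$ bounds the absolute values of the numerators and denominators of the input. Choosing rational $\eps$ with $\eps^2 n < \frac{1}{d(dT)^{2d}}$, e.g.\ $\eps = \frac{1}{nd(dT)^{d}}$, a single DVS query decides membership by thresholding $d(\ve{0},S)^2$ against $\frac{1}{d(dT)^{2d}}$: below the threshold means $\ve 0 \in Q$, at or above means $\ve 0 \notin Q$. This eliminates the ``delicate'' denominator-bound rounding your fallback would require, and it is exactly the step at which your proposal stops short of a proof.
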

\ifnum\version=\stocversion
\else
To prove each of the lemmas below, we illustrate the problem transformation and its strong polynomiality.  
The first two reductions are highly classical, while those following are intuitive, but we do not believe have been written elsewhere. 
\ifnum\version=\stocversion
\else
\fi

Below is the sequence of algorithmic reductions that reduce LP to DVS.

\begin{lemma}\label{lem:LPtoFP}
LP reduces in strongly-polynomial time to FP.
\end{lemma}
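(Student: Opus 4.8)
The plan is to use strong LP duality to turn the optimization in LP into a pure feasibility question, paired with one preliminary feasibility check to separate the INFEASIBLE and INFINITE cases. This is the classical primal--dual reduction. First I would put the primal constraints $A\ve{x} \le \ve{b}$ (with $\ve{x}$ free of sign) into the equality/nonnegativity form demanded by FP: write $\ve{x} = \ve{x}^+ - \ve{x}^-$ with $\ve{x}^+, \ve{x}^- \ge \ve{0}$ and introduce slacks $\ve{s} \ge \ve{0}$, so that $A\ve{x} \le \ve{b}$ becomes $A\ve{x}^+ - A\ve{x}^- + \ve{s} = \ve{b}$. Calling FP on this system decides primal feasibility; if the answer is NO, I output INFEASIBLE.

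Next, recall the dual $\min\{\ve{b}^T\ve{y} \suchthat A^T\ve{y} = \ve{c}, \ve{y} \ge \ve{0}\}$. I would assemble a single combined system in FP form whose variables are $\ve{x}^+, \ve{x}^-, \ve{s}, \ve{y}$ and whose constraints are: the primal equalities above; the dual equalities $A^T\ve{y} = \ve{c}$ with $\ve{y} \ge \ve{0}$; and the objective-matching equality $\ve{c}^T(\ve{x}^+ - \ve{x}^-) - \ve{b}^T\ve{y} = 0$. By weak duality, any feasible point of this system yields $\ve{x} = \ve{x}^+ - \ve{x}^-$ that is primal-feasible and satisfies $\ve{c}^T\ve{x} = \ve{b}^T\ve{y}$ for a dual-feasible $\ve{y}$, hence $\ve{x}$ is optimal; so if FP returns a point I extract $\ve{x}$ and output it. Strong duality guarantees that whenever both primal and dual are feasible this combined system is feasible, and it is itself an FP instance (linear equalities in nonnegative variables), so one FP call settles this step.

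The three outcomes are then resolved as follows. After confirming primal feasibility, if the combined system is infeasible then, the primal being feasible, the dual must be infeasible, which by weak duality forces the primal to be unbounded; in that case I output INFINITE. Thus LP is solved with at most two FP calls, and since FP returns rational points, $\ve{x} = \ve{x}^+ - \ve{x}^-$ is rational.

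For strong polynomiality I would observe that every step is a purely syntactic rearrangement of the input data $A, \ve{b}, \ve{c}$: the number of variables and constraints in each FP instance is linear in $d$ and $n$, and each entry of the constructed matrices and vectors is a copy of, or a single difference of, input entries. Hence the number of arithmetic operations is polynomial in the number of input entries and all intermediate numbers stay polynomially bounded in the input size, as required for a strongly polynomial reduction. I expect the only real subtlety--rather than an obstacle--to be the careful bookkeeping certifying that invoking strong duality correctly discriminates the INFEASIBLE, INFINITE, and finite-optimum cases, and that the objective-matching equality suffices (via weak duality) to certify optimality of the returned point.
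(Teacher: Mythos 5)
Your proposal is correct and follows essentially the same route as the paper: one FP call on the slack/sign-split form of $A\ve{x} \le \ve{b}$ to separate INFEASIBLE, then a second FP call on the combined primal--dual system with an objective-matching constraint (the paper encodes it as $\ve{b}^T\ve{y} \le \ve{c}^T\ve{x}$ with a slack rather than an exact equality, which is immaterial), with strong duality certifying the finite-optimum case and infeasibility of the combined system yielding INFINITE. The only nitpick is that the final step (primal feasible plus dual infeasible implies unbounded) rests on the LP duality theorem rather than weak duality alone, but this does not affect the correctness of your argument.
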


\begin{proof}
Let $\oracle$ denote the FP oracle. 
\begin{algorithmic}
\Require $A \in \QQ^{d \times n}, \ve{b} \in \QQ^{d}, \ve{c} \in \QQ^{n}$.
\State Invoke $\oracle$ on 
\begin{equation}\label{eq:LPfeas}
\begin{bmatrix} A & -A & I_d \end{bmatrix} \begin{bmatrix} \ve{x^+} \\ \ve{x^-} \\ \ve{s} \end{bmatrix} = \ve{b}, \begin{bmatrix} \ve{x^+} \\ \ve{x^-} \\ \ve{s} \end{bmatrix} \ge \ve{0}.
\end{equation}
If the output is NO, output INFEASIBLE.

\State Invoke $\oracle$ on 
\begin{equation}\label{eq:LPfinite}
\begin{bmatrix}-\ve{c}^T & \ve{c}^T & \ve{b}^T & \\ A & -A & 0 & I_{d+2n+1} \\ 0 & 0 & A^T & \\ 0 & 0 & -A^T & \end{bmatrix} \begin{bmatrix} \ve{x^+} \\ \ve{x^-} \\ \ve{y} \\ \ve{s} \end{bmatrix} = \begin{bmatrix} 0 \\ \ve{b} \\ \ve{c} \\ -\ve{c} \end{bmatrix}, \begin{bmatrix} \ve{x^+} \\ \ve{x^-} \\ \ve{y} \\ \ve{s} \end{bmatrix} \ge \ve{0}.
\end{equation}
If the output is NO, output INFINITE, else output rational $\ve{x} = \ve{x^+} - \ve{x^-}$.
\end{algorithmic}

\begin{claim}
A solution 
\[
 \ve{\tilde{x}} := \begin{bmatrix} \ve{x^+} \\ \ve{x^-} \\ \ve{s} \end{bmatrix}
\]
to (\ref{eq:LPfeas}) gives a solution to $A\ve{x} \le \ve{b}$ and vice versa. 
\end{claim}
\begin{claimproof}
Suppose $\ve{\tilde{x}}$ satisfies (\ref{eq:LPfeas}).  
Then $A\ve{x^+} - A\ve{x^-} + \ve{s} = \ve{b}$.  Define $\ve{x} = \ve{x^+} - \ve{x^-}$ and note $\ve{s} \ge \ve{0}$.  
Then $A\ve{x} \le \ve{b}$.  
Now, suppose $\ve{x}$ satisfies $A\ve{x} \le \ve{b}$.  
Let $\ve{x^+}$ be the positive coordinates of the vector $\ve{x}$ and $\ve{x^-}$ be the negative components in absolute value, so $x_i^+ = \max(x_i,0)$ and $x_i^- = \max(-x_i,0)$.  
Define $\ve{s} = \ve{b}- A\ve{x}$.  Since $A\ve{x} \le \ve{b}$, we have that $\ve{s} \ge \ve{0}$ and by construction, $\ve{x^+}, \ve{x^-} \ge \ve{0}$. 
Note that $\begin{bmatrix} A & -A & I \end{bmatrix} \ve{\tilde{x}} = A\ve{x^+} - A\ve{x^-} + \ve{s} = A(\ve{x^+} - \ve{x^-}) + \ve{b} - A\ve{x} = A\ve{x} + \ve{b} - A\ve{x} = \ve{b}.$
\end{claimproof}

\begin{claim} 
A solution 
\[
\ve{\tilde{z}} := \begin{bmatrix} \ve{x^+} \\ \ve{x^-} \\ \ve{y} \\ \ve{s} \end{bmatrix}
\] 
to (\ref{eq:LPfinite}) gives a solution to $\text{argmax}\{\ve{c}^T\ve{x} | A\ve{x} \le \ve{b}\}$ and vice versa.
\end{claim}

\begin{claimproof}
Suppose $\ve{\tilde{z}}$ is a solution to (\ref{eq:LPfinite}).  
These are the KKT conditions for the LP $\text{argmax}\{\ve{c}^T\ve{x} | A\ve{x} \le \ve{b}\}$, so $\ve{x} = \ve{x^+} - \ve{x^-}$ is the optimum.  
Suppose $\ve{x} \in \text{argmax}\{\ve{c}^T\ve{x} | A\ve{x} \le \ve{b}\}$.  
By strong duality, there exists $\ve{y}$ so that $\ve{b}^T\ve{y} \le \ve{c}^T\ve{x}$ and $A^T\ve{y} = \ve{c}, \ve{y} \ge \ve{0}$.  
Thus, letting $\ve{x^+}$ and $\ve{x^-}$ be as above, we have $$-\ve{c}^T(\ve{x^+} - \ve{x^-}) + \ve{b}^T\ve{y} \le 0,\; A(\ve{x^+} - \ve{x^-}) \le \ve{b},\; A^T\ve{y}\le \ve{c},\; -A^T\ve{y} \le -\ve{c}.$$  
Now choose $\ve{s} \ge \ve{0}$ so that 
\[
\ve{c}^T\ve{x^+} - \ve{c}^T \ve{x^-} + \ve{b}^T \ve{y} + s_1 = 0, \; A\ve{x^+} - A\ve{x^-} + \ve{s_2^{m+1}} = \ve{b},\; A^T\ve{y} + \ve{s_{m+2}^{n+m+1}} = \ve{c},\; -A^T\ve{y} + \ve{s_{n+m+2}^{2n+m+1}} = -\ve{c}
\] 
where $\ve{s_i^j}$ denotes the subvector of $\ve{s}$ of coordinates $s_i, s_{i+1}, ..., s_{j-1}, s_j$.  
Thus, $\ve{\tilde{z}}$ satisfies (\ref{eq:LPfinite}).
\end{claimproof}

Clearly, constructing the required FP problems takes strongly polynomial time and we have only two calls to $\oracle$, so the reduction is strongly-polynomial time.
\end{proof}

\begin{lemma}
FP reduces in strongly-polynomial time to BFP.
\end{lemma}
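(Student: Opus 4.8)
The plan is to exploit that the only structural difference between FP and BFP is the extra inequality $\sum_{i=1}^n x_i \le M$. Writing $P := \{\ve{x} \suchthat A\ve{x}=\ve{b},\ \ve{x}\ge\ve{0}\}$ for the FP region and $P_M := \{\ve{x}\suchthat A\ve{x}=\ve{b},\ \ve{x}\ge\ve{0},\ \sum_i x_i\le M\}$ for the BFP region, we always have $P_M\subseteq P$. So given an FP instance $(A,\ve{b})$ I would compute a single threshold $M$, invoke the BFP oracle $\oracle$ on $(A,\ve{b},M)$, and pass the answer through: if $\oracle$ returns a point $\ve{x}$, output that same $\ve{x}$, which lies in $P_M\subseteq P$ and hence solves FP; if $\oracle$ returns NO, output NO. The only thing that makes this correct is choosing $M$ large enough that nonemptiness of $P$ forces nonemptiness of $P_M$.

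To see such an $M$ exists and can be bounded explicitly, I would invoke the standard fact that a nonempty polyhedron of the form $\{\ve{x}\suchthat A\ve{x}=\ve{b},\ \ve{x}\ge\ve{0}\}$ contains a basic feasible solution (a vertex), whose nonzero entries are given by Cramer's rule as ratios of square subdeterminants of $[A\mid\ve{b}]$. After clearing denominators so that $A$ and $\ve{b}$ are integral (scaling each equation by the least common multiple of the denominators in its row, which leaves $P$ unchanged), every such entry has absolute value at most $\Delta$, the largest absolute value of a square subdeterminant of $[A\mid\ve{b}]$, because the denominator is a nonzero integer. Since a basic feasible solution has at most $d$ nonzero entries, some vertex $\ve{v}\in P$ satisfies $\sum_i v_i \le d\,\Delta$. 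Rather than compute $\Delta$ by enumerating subdeterminants, I would bound it via Hadamard's inequality: if $\Gamma$ is the maximum absolute value of an entry of the integral $[A\mid\ve{b}]$, then every $k\times k$ subdeterminant is at most $k^{k/2}\Gamma^{k}\le d^{d/2}\Gamma^{d}$, so I set $M := d\cdot d^{d/2}\Gamma^{d}$ (rounded up). Then $P\neq\emptyset$ implies $\ve{v}\in P_M$, while $P=\emptyset$ implies $P_M\subseteq P=\emptyset$, giving correctness in both cases.

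What remains is the strong polynomiality of producing $M$, which I expect to be the main obstacle, since correctness is immediate from the containment $P_M\subseteq P$ and the vertex bound. The point is that $M$ must be written down using a number of arithmetic operations polynomial in $d$ and $n$ alone, with all intermediate operands of bit-length polynomial in the input size. Finding $\Gamma$ uses $O(dn)$ comparisons; forming $\Gamma^{d}$ and $d^{d/2}$ by repeated squaring uses $O(\log d)$ multiplications; and the resulting bit-length $\log M = O(\log d + d\log d + d\log\Gamma)$ is polynomial in the encoding size, so no blow-up occurs. The one genuinely delicate piece is the denominator-clearing preprocessing, where one must confirm that the least common multiples are formed within the strongly-polynomial model (or, alternatively, bound the subdeterminants of the rational matrix directly in terms of encoding sizes, avoiding denominator clearing altogether and citing the classical vertex-complexity bound from \cite{schrijver98}). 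Once $M$ is in hand, the reduction makes a single oracle call, so the whole procedure is strongly polynomial.
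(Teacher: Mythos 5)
Your core argument is the same as the paper's: make a single call to the BFP oracle with an explicitly computed bound $M$, and justify $M$ by the existence of a basic feasible solution whose positive entries are Cramer ratios of subdeterminants, bounded because the denominator determinant is a nonzero integer once the data are integral. The paper's determinant bound (crude products of entries, yielding the threshold $n D^{d(n+1)\min(d^3,n^3)}N^{d(n+1)}$, where $D$ and $N$ bound the denominator and numerator magnitudes of the rational input) plays exactly the role of your Hadamard bound $d \cdot d^{d/2}\Gamma^{d}$; both are formed by comparisons and repeated multiplication, both have bit length polynomial in the input size, and Hadamard is in fact the tighter of the two. So the skeleton of your reduction and its correctness proof match the paper's.

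The one point you left open is, however, the one that fails as stated: clearing denominators by per-row least common multiples is not a strongly polynomial preprocessing step. Computing an LCM requires GCDs, and Euclid's algorithm performs a number of divisions proportional to the bit length of its operands; that count cannot be bounded by a polynomial in $d$ and $n$ alone, and GCD is not among the elementary arithmetic operations of the strongly polynomial model. The repair is precisely your parenthetical alternative, and it is what the paper does: the algorithm never forms an integral matrix at all. It only extracts $D$ and $N$ by comparisons and builds the threshold by multiplications; integrality is invoked only inside the correctness proof, where the relevant nonsingular square submatrix is scaled by the \emph{product} (not the LCM) of its denominators --- a multiplication-only operation --- before Cramer's rule is applied. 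With that substitution (product for LCM, or equivalently moving all scaling into the proof rather than the algorithm), your argument is complete and coincides with the paper's.
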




\begin{proof}
Let $\oracle$ denote the oracle for BFP. Suppose $A = (a_{ij}/\alpha_{ij})_{i,j = 1}^{d,n}$, $\ve{b} = (b_j/\beta_j)_{j=1}^d$ and define $D := \max(\max_{i \in [d], j \in [n]} |\alpha_{ij}|, \max_{k \in [d]} |\beta_k|)$ and $N : =  \max(\max_{i \in [d], j \in [n]} |a_{ij}|, \max_{k \in [d]} |b_k|)+1$.  If the entry of $A$, $a_{ij}/\alpha_{ij} = 0$ or the entry of $\ve{b}$, $b_j/\beta_j = 0$ define $a_{ij} = 0$ and $\alpha_{ij}=1$ or $b_j = 0$ and $\beta_j = 1$.
\begin{algorithmic}
\Require $A \in \QQ^{d \times n}, \ve{b} \in \QQ^{d}$.
\State Invoke $\oracle$ on $A\ve{x} = \ve{b}, \ve{x} \ge \ve{0}, \sum_{i=1}^n x_i \le n D^{d(n+1)\min(d^3,n^3)}N^{d(n+1)}$.  If the output is NO, output NO, else output rational $\ve{x}$.
\end{algorithmic}

\begin{claim}  
The FP $A\ve{x} = \ve{b}, \ve{x} \ge \ve{0}$ is feasible if and only if the BFP $A\ve{x} = \ve{b}, \ve{x} \ge \ve{0}, \sum_{i=1}^n x_i \le n D^{d(n+1)\min(d^3,n^3)}N^{d(n+1)}$ is feasible.
\end{claim}

\begin{claimproof}  
If the BFP is feasible then clearly the FP is feasible.  Suppose the FP is feasible.  
By the theory of minimal faces of polyhedra, we can reduce this to a FP defined by a square matrix, $A$, in the following way:  
By \cite[Theorem 1.1]{chernikov1965convolution}, there is a solution, $\ve x$, with no more than $\min(d,n)$ positive entries so that $A\ve{x} = \ve{b}$ and the positive entries of $\ve{x}$ combine linearly independent columns of $A$ to form $\ve{b}$.  
Let $A'$ denote the matrix containing only these linearly independent columns and $\ve{x}'$ denote only the positive entries of $\ve{x}$.  Then $A' \ve{x}' = \ve{b}$.  Now, note that $A' \in \QQ^{d \times m}$ where $m \le d$.  Since the column rank of $A'$ equals the row rank of $A'$, we may remove $d-m$ linearly dependent rows of $A'$ and the corresponding entries of $\ve{b}$, forming $A''$ and $\ve{b}'$ so that $A'' \ve{x}' = \ve{b}'$ where $A'' \in \QQ^{m \times m}$, $\ve{b}' \in \QQ^{m}$ and $A''$ is a full-rank matrix.

Define $M:= \prod_{i,j=1}^m |\alpha_{i,j}''| \prod_{k=1}^m |\beta_{k}'|$ and note that $M \le D^{d(n+1)}$.  
Define $L := \prod_{i,j=1}^m (|a_{i,j}''|+1) \prod_{k=1}^m (|b_{k}'|+1)$ and note that $L \le N^{d(n+1)}$.  
Define $\bar{A} = MA''$ and $\bar{\ve{b}} = Mb'$ and note that $\bar{A}$ and $\bar{\ve{b}}$ are integral.  By Cramer's rule, we known that $x_i' = \frac{|\text{det} \bar{A}_i|}{|\text{det} \bar{A}|}$ where $\bar{A}_i$ denotes $\bar{A}$ with the $i$th column replaced by $\bar{\ve{b}}$.  By integrality, $|\text{det} \bar{A}| \ge 1$, so $x_i' \le |\text{det} \bar{A}_i| \le \prod_{i,j=1}^m M (|a_{ij}|+1) \prod_{k=1}^m M (|b_k|+1) = M^{m^3}L \le D^{d(n+1)\min(d^3,n^3)}N^{d(n+1)}$.  Now, note that $\ve{x}'$ defines a solution, $\ve{x}$, to the original system of equations.  Let $x_i = x_j'$ if the $j$th column of $A'$ was the selected $i$th column of $A$  and $x_i=0$ if the $i$th column of $A$ was not selected.  Note then that $A \ve{x} = \ve{b}, \ve{x} \ge \ve{0}, \sum_{i=1}^n x_i \le n D^{d(n+1)\min(d^3,n^3)}N^{d(n+1)}$.
\end{claimproof}

Thus, we have that the FP and BFP are equivalent.  To see that this is a strongly-polynomial time reduction, note that adding this additional constraint takes time for constructing the number $n D^{d(n+1)\min(d^3,n^3)}N^{d(n+1)}$ plus small constant time.  This number takes $d(n+1)$ comparisons and $d(n+1)\min(d^3,n^3)$ multiplications to form.  Additionally, this number takes space which is polynomial in the size of the input (polynomial in $d$,$n$ and size of $D$, $N$).  
\end{proof}

\begin{lemma} BFP reduces in strongly-polynomial time to VPM.
\end{lemma}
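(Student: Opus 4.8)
The plan is to convert the inequality bound $\sum_{i=1}^n x_i \le M$ of BFP into the exact normalization $\sum_i y_i = 1$ required by VPM using two standard manipulations: introducing a slack variable to turn the inequality into an equality, and then rescaling by $M$ so that the right-hand side of the normalization becomes $1$. Concretely, given a BFP instance $(A, \ve b, M)$ with $A \in \QQ^{d\times n}$ and $M > 0$, I would build the VPM instance with matrix $A' = \begin{bmatrix} A & \ve{0} \end{bmatrix} \in \QQ^{d \times (n+1)}$ (an appended zero column standing for the slack) and right-hand side $\ve b' = \ve b / M$, then invoke the VPM oracle $\oracle$ on $\{\ve y \suchthat A' \ve y = \ve b', \ve y \ge \ve 0, \sum_{i=1}^{n+1} y_i = 1\}$. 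If $\oracle$ returns NO, I output NO; otherwise, from a returned $\ve y$ with first-$n$ subvector $\ve{y}' = (y_1, \dots, y_n)$ I output $\ve x := M \ve{y}'$.

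First I would establish that the two feasibility problems are equivalent, together with correctness of the recovery map. For the forward direction, given a VPM-feasible $\ve y$, the appended zero column makes the slack coordinate $y_{n+1}$ invisible to the constraint $A' \ve y = \ve b'$, so $A \ve{y}' = \ve b / M$; scaling by $M$ gives $A \ve x = \ve b$, while $\ve x \ge \ve 0$ and $\sum_{i=1}^n x_i = M\bigl(1 - y_{n+1}\bigr) \le M$ because $y_{n+1} \ge 0$ and $\sum_{i=1}^{n+1} y_i = 1$. Hence $\ve x$ is BFP-feasible. For the converse, given a BFP-feasible $\ve x$, I would set $y_i = x_i/M$ for $i \le n$ and $y_{n+1} = 1 - \tfrac{1}{M}\sum_{i=1}^n x_i$; the bound $\sum_i x_i \le M$ is exactly what guarantees $y_{n+1} \ge 0$, while by construction $\sum_{i=1}^{n+1} y_i = 1$ and $A'\ve y = \ve b / M$, so $\ve y$ is VPM-feasible. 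Thus the BFP is feasible iff the constructed VPM is, and the oracle's answer transfers correctly under the recovery map.

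Finally I would argue strong polynomiality: forming $A'$ only appends a zero column, forming $\ve b' = \ve b / M$ costs $d$ rational divisions, and recovering $\ve x$ costs $n$ multiplications by $M$; there is a single oracle call, the total number of arithmetic operations is bounded by a polynomial in $d$ and $n$ independent of the bit-lengths, and all intermediate rationals have size polynomial in the size of the input.

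The reduction is elementary, so I do not anticipate a genuine obstacle; the only point requiring care is the bookkeeping that the inequality-to-equality passage and the rescaling interact correctly. In particular, one must check that the nonnegativity of the slack coordinate $y_{n+1}$ is precisely the encoding of the bound $\sum_i x_i \le M$, so that no feasible BFP point is lost and no infeasible point is introduced by the transformation.
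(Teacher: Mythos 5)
Your proposal is correct and is essentially the paper's own reduction: both append a zero column for a slack coordinate, impose the normalization $\sum y_i = 1$, and recover $\ve{x} = M\ve{y}$; the only cosmetic difference is that the paper scales the matrix (using $MA$ with right-hand side $\ve{b}$) while you equivalently scale the right-hand side (using $A$ with $\ve{b}/M$). The feasibility-equivalence argument and the strong-polynomiality observation match the paper's.
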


\begin{proof}
Let $\oracle$ denote the oracle for VPM.
\begin{algorithmic}
\Require $A \in \QQ^{d \times n}, b \in \QQ^{d}, 0 < M \in \QQ$.
\State Invoke $\oracle$ on 
\begin{equation}\label{eq:VPM}
    \begin{bmatrix} MA & 0 \end{bmatrix} \begin{bmatrix} \ve{y} \\ z \end{bmatrix} = \ve{b}, \begin{bmatrix} \ve{y} \\ z \end{bmatrix} \ge \ve{0}, z + \sum_{i=1}^n y_i= 1.
\end{equation}  
If the output is NO, output NO, else output rational $\ve{x} = M\ve{y}$.
\end{algorithmic}

\begin{claim}
A solution 
$$
\ve{\tilde{w}} := \begin{bmatrix} \ve{y} \\ z \end{bmatrix}
$$ 
to (\ref{eq:VPM}) gives a solution the BFP instance, $A\ve{x}=\ve{b}, \ve{x} \ge \ve{0}, \sum_{i=1}^n x_i \le M$ and vice versa.
\end{claim}

\begin{claimproof} 
Suppose $\ve{\tilde{w}}$ satisfies (\ref{eq:VPM}).  
Then $\ve{x} = M\ve{y}$ is a solution to the BFP instance since $A\ve{x} = MA\ve{y} = \ve{b}$ and since $\ve{y} \ge \ve{0}$, $\ve{x} = M\ve{y} \ge \ve{0}$ and since $\sum_{i=1}^n y_i + z = 1$, we have $\sum_{i=1}^n y_i \le 1$ so $\sum_{i=1}^n x_i = M \sum_{i=1}^n y_i \le M$.  
Suppose $\ve{x}$ is a solution to the BFP instance.  
Then $\ve{y} = \frac{1}{M}\ve{x}$ and $z = 1 - \sum_{i=1}^n y_i$ satisfies (\ref{eq:VPM}), since $\begin{bmatrix} MA & 0 \end{bmatrix} \ve{\tilde{w}} = MA\ve{y} = A\ve{x} = \ve{b}$, $\ve{y} \ge \ve{0}$ since $\ve{x} \ge \ve{0}$ and since $\sum_{i=1}^n x_i \le M$, we have $\sum_{i=1}^n y_i = \frac{1}{M} \sum_{i=1}^n x_i \le 1$ so $z \ge 0$.
\end{claimproof}

Clearly, this reduction is simply a rewriting, so the reduction is strongly-polynomial time.
\end{proof}

\begin{lemma} VPM reduces in strongly-polynomial time to ZVPM.
\end{lemma}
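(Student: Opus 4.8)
The plan is to reduce VPM to ZVPM by a single affine translation that moves the target vector $\ve{b}$ to the origin. Geometrically, a VPM instance $(A,\ve{b})$ asks whether $\ve{b}$ lies in the convex hull of the columns of $A$ (the constraints $\ve{x}\ge\ve{0}$, $\sum_i x_i = 1$ force $\ve{x}$ to encode a convex combination, and $A\ve{x}=\ve{b}$ says this combination equals $\ve{b}$), whereas a ZVPM instance asks the same question with $\ve{0}$ in place of $\ve{b}$. Translating every column of $A$ by $-\ve{b}$ should convert one question into the other while preserving the convex-combination coefficients.

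Concretely, I would set $A' := A - \ve{b}\ve{1}^T$, where $\ve{1}$ is the all-ones vector in $\QQ^n$; that is, subtract $\ve{b}$ from each of the $n$ columns of $A$. The reduction then invokes the ZVPM oracle $\oracle$ on $A'$ and returns its output directly: NO if $\oracle$ answers NO, and otherwise the point $\ve{x}$ that $\oracle$ returns.

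The key step is to verify the equivalence of the two feasible sets. For any $\ve{x}$ satisfying $\ve{x}\ge\ve{0}$ and $\sum_{i=1}^n x_i = 1$, I would use the identity
\[
A'\ve{x} = A\ve{x} - \ve{b}(\ve{1}^T\ve{x}) = A\ve{x} - \ve{b}\sum_{i=1}^n x_i = A\ve{x} - \ve{b},
\]
so that $A'\ve{x}=\ve{0}$ holds exactly when $A\ve{x}=\ve{b}$. Since the constraints $\ve{x}\ge\ve{0}$ and $\sum_i x_i = 1$ are literally the same in both problems, a vector $\ve{x}$ is feasible for the VPM instance $(A,\ve{b})$ if and only if it is feasible for the ZVPM instance $A'$. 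Hence $\oracle$ answers NO on $A'$ precisely when the VPM instance is infeasible, and any $\ve{x}$ it returns is already a valid VPM solution with no post-processing required.

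Finally, I would confirm strong polynomiality. Forming $A'$ requires exactly $dn$ rational subtractions (one per matrix entry), a number of arithmetic operations that is polynomial in the dimensions and independent of the bit sizes of the input. Each entry of $A'$ is the difference of two input rationals, so its encoding length is bounded by a constant multiple of the input encoding length, keeping all intermediate numbers polynomially bounded. There is no genuine obstacle in this reduction: it is essentially a coordinate translation. The only point that requires a word of care is precisely this bit-size bookkeeping, needed to certify that the translation is \emph{strongly} polynomial rather than merely polynomial.
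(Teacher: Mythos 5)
Your proof is correct and matches the paper's own argument exactly: both subtract $\ve{b}$ from every column of $A$, invoke the ZVPM oracle on the translated matrix, and use the constraint $\sum_{i=1}^n x_i = 1$ to show $A'\ve{x} = \ve{0}$ if and only if $A\ve{x} = \ve{b}$. The paper likewise notes the reduction is a mere rewriting, so no further comparison is needed.
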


\begin{proof}
Let $\oracle$ be the oracle for ZVPM.
\begin{algorithmic}
\Require $A \in \QQ^{d \times n}, b \in \QQ^{d}$.
\State Invoke $\oracle$ on 
\begin{equation}\label{eq:ZVPM}
    \begin{bmatrix} \ve{a_1}-\ve{b} & \ve{a_2} - \ve{b} & \cdots & \ve{a_n} - \ve{b} \end{bmatrix} \ve{x} = \ve{0}, \ve{x} \ge \ve{0}, \sum_{i=1}^n x_i = 1
\end{equation} where $\ve{a_i} \in \QQ^m$ is the $i$th column of $A$.  If the output is NO, output NO, else output rational $\ve{x}$.
\end{algorithmic}

\begin{claim} 
A solution to (\ref{eq:ZVPM}) gives a solution to the VPM instance and vice versa.
\end{claim}

\begin{claimproof}
Note that $\ve{x}$ satisfies (\ref{eq:ZVPM}) if and only if $0 = \sum_{i=1}^n x_i(\ve{a_i} -\ve{b}) = \sum_{i=1}^n x_i \ve{a_i} - \ve{b} \sum_{i=1}^n x_i = A\ve{x} - \ve{b}$ so $A\ve{x} = \ve{b}$.  
Thus, $\ve{x}$ is a solution to the VPM instance if and only if $\ve{x}$ is a solution to (\ref{eq:ZVPM}). 
\end{claimproof}

Clearly, this reduction is simply a rewriting, so the reduction is strongly-polynomial time.
\end{proof}






\begin{lemma}
ZVPM reduces in strongly-polynomial time to ZVPMD.
\end{lemma}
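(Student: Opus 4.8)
The statement to prove is that ZVPM reduces in strongly-polynomial time to ZVPMD. Recall the two problems: ZVPM is a \emph{search} problem (given rational points defining columns of $A$, produce a rational convex-combination vector $\ve{x} \ge \ve{0}$, $\sum x_i = 1$, with $A\ve{x}=\ve{0}$, or output NO), while ZVPMD is merely the \emph{decision} version (does $\ve{0} \in \conv\{\ve{p_1},\dotsc,\ve{p_n}\}$?). So the content of the lemma is the standard \emph{search-to-decision} upgrade: extract an explicit certificate $\ve{x}$ using only polynomially many calls to the yes/no oracle, in strongly-polynomial time.

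The plan is a self-reduction that identifies a Carath\'eodory subset and then recovers the coefficients. First I would call the ZVPMD oracle on the full point set; if it says NO, output NO. Otherwise $\ve{0}$ lies in the convex hull and I would prune the points down to an affinely independent (or at least minimal) supporting subset: iterate over $i = 1,\dotsc,n$, tentatively delete $\ve{p_i}$ from the current set, and call the oracle on the remainder; if $\ve{0}$ is still in the hull, the deletion is permanent, otherwise $\ve{p_i}$ is kept. This uses at most $n$ oracle calls and terminates with a minimal subset $S$, $|S| \le d+1$ by Carath\'eodory, such that $\ve{0} \in \conv(S)$ and $\ve{0} \notin \conv(S \setminus \{\ve{p}\})$ for every $\ve{p} \in S$. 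For a minimal such set, the points in $S$ are affinely independent, so the convex combination expressing $\ve{0}$ is \emph{unique}; I would then recover its rational coefficients by solving the square (or full column rank) linear system $\sum_{i \in S}\lambda_i \ve{p_i} = \ve{0}$, $\sum_{i \in S}\lambda_i = 1$ directly (e.g.\ by Gaussian elimination / Cramer's rule), which is a strongly-polynomial linear-algebra computation and yields rational $\lambda_i \ge 0$. Padding with zeros on the deleted coordinates gives the required $\ve{x}$ for the original ZVPM instance.

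I would justify correctness in two short claims: (i) the pruning loop preserves the invariant $\ve{0} \in \conv(\text{current set})$ and ends at a minimal supporting set; (ii) minimality forces affine independence of $S$, hence a unique and necessarily nonnegative solution of the linear system, which is exactly a valid ZVPM certificate. For complexity, note there are exactly $n+1$ oracle invocations (one initial, $n$ in the loop), each on a subinstance of size no larger than the input, plus one strongly-polynomial linear solve; the number of arithmetic operations is polynomial in $d$ and $n$ and independent of the bit-sizes of the entries, so the reduction is strongly polynomial.

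The main obstacle is the strong-polynomiality bookkeeping rather than the logical structure: I must ensure the intermediate rationals produced by solving the linear system have size polynomially bounded in the input (so the output is genuinely rational and compact), and that Gaussian elimination is carried out in a way whose running time depends only on $d$ and $n$ and not on the magnitudes of the numbers. This is standard (Edmonds-style bounds on determinant and intermediate-entry sizes), and affine independence of $S$ is precisely what guarantees a unique solution with controlled denominators, so the delicate step is verifying that minimality via the decision oracle indeed yields an affinely independent set — the only subtlety is handling the possibility that a minimal supporting set has fewer than $\dim$-many points, which the uniqueness argument covers automatically.
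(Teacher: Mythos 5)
Your reduction is exactly the paper's: one oracle call on the full point set, a single greedy pruning pass ($n$ further calls, permanently deleting any point whose removal keeps $\ve{0}$ in the hull), and then solving the linear system $\sum_i \lambda_i \ve{p_i} = \ve{0}$, $\sum_i \lambda_i = 1$ for the surviving points. The only difference is cosmetic: the paper argues via Carath\'eodory plus the greedy-order invariant that the surviving set \emph{is} a strict Carath\'eodory subset, while you derive affine independence from minimality (the standard perturbation argument behind Carath\'eodory's theorem) --- the same fact, so your proposal is correct.
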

\begin{proofidea}
The reduction sequentially asks for every vertex whether it is redundant and if so, it removes it and continues. 
This process ends with at most $d+1$ vertices so that $\ve{x}$ is a strict convex combination of them and the coefficients $x_i$ can be found in this resulting case by solving a linear system.
\end{proofidea}

\begin{proof}
Let $\oracle$ denote the ZVPMD oracle.
\begin{algorithmic}
\Require $P:= \{\ve{A_1}, \dotsc, \ve{A_n}\} \subseteq \QQ^d$ where $A_i$ is the $i$th column of $A$.
\State Invoke $\oracle$ on $P$. If the output is NO, output NO.
\For{$i=1, \dotsc, n$}
\State Invoke $\oracle$ on instance $P$ without $\ve{A_i}$. If output is YES, remove $\ve{A_i}$ from $P$.
\EndFor
\State Let $m$ be the cardinality of $P$.
\State Output the solution $x_1, \dotsc, x_m$ to the linear system $\sum x_i = 1$, $\sum_{\ve{p_i} \in P} x_i \ve{p_i} = \ve{0}$
\end{algorithmic}
Let $P^*$ be the resulting set of points $P$ after the loop in the reduction. Claim: $P^*$ contains at most $d+1$ points so that $\ve{0}$ is a strict convex combination of (all of) them.
Proof of claim: 
By Caratheodory's theorem there is a subset $Q \subseteq P^*$ of at most $d+1$ points so that $\ve{0}$ is a strict convex combination of points in $Q$. 
We will see that $P^*$ is actually equal to $Q$. 
Suppose not, for a contradiction. Let $\ve{p} \in P^* \setminus Q$. 
At the time the loop in the reduction examines $\ve{p}$, no point in $Q$ has been removed and therefore $\ve{p}$ is redundant and is removed. 
This is a contradiction.
\end{proof}

\ifnum\version=\stocversion
\else
In our next lemma, we make use of the following elementary fact.
\begin{claim}\label{claim:affine}
Given $A$ an $m \times n$ matrix let $B$ be $A$ with a row of $1$s appended. 
The columns of $A$ are affinely independent if and only if the columns of $B$ are linearly independent. 
The convex hull of the columns of $A$ is full dimensional if and only if rank of $B$ is $m+1$.
\end{claim}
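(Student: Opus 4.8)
The plan is to handle both parts through the single observation that a linear dependence among the columns of $B$ is exactly an affine dependence among the columns of $A$. Write the columns of $A$ as $\ve{a_1}, \dotsc, \ve{a_n} \in \RR^m$, so that the columns of $B$ are the lifted vectors $(\ve{a_i}^T, 1)^T \in \RR^{m+1}$.

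First I would prove part 1. A coefficient vector $\lambda = (\lambda_1, \dotsc, \lambda_n)$ satisfies $\sum_i \lambda_i (\ve{a_i}^T, 1)^T = \ve{0}$ if and only if it satisfies both $\sum_i \lambda_i \ve{a_i} = \ve{0}$ and $\sum_i \lambda_i = 0$, which is precisely the definition of an affine dependence among the $\ve{a_i}$. Hence the columns of $B$ admit a nontrivial linear dependence iff the columns of $A$ admit a nontrivial affine dependence; negating both statements gives the claimed equivalence. This step is a pure restatement of two linear systems, so there is nothing to compute.

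For part 2, I would first note that the argument above applies verbatim to any index subset $S \subseteq [n]$: the columns $\{\ve{a_i}\}_{i \in S}$ are affinely independent iff the lifted columns $\{(\ve{a_i}^T,1)^T\}_{i \in S}$ are linearly independent. Consequently the largest affinely independent subset of $\{\ve{a_1}, \dotsc, \ve{a_n}\}$ has the same cardinality as the largest linearly independent subset of the columns of $B$, and the latter is exactly $\operatorname{rank} B$. Since the dimension of the affine hull of a point set is one less than the maximal number of affinely independent points it contains, we obtain the identity $\operatorname{rank} B = \dim \aff\{\ve{a_1}, \dotsc, \ve{a_n}\} + 1$. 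The convex hull is full-dimensional precisely when this affine hull equals all of $\RR^m$, i.e. when $\dim \aff\{\ve{a_i}\} = m$, which by the displayed identity is equivalent to $\operatorname{rank} B = m+1$.

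There is no genuine obstacle here, as the statement is elementary; the only point requiring a little care is the identity $\operatorname{rank} B = \dim \aff + 1$, which I would justify by invoking the standard fact that affine dimension is one less than the maximal number of affinely independent points, combined with the subset form of part 1 rather than reproving either ingredient from scratch.
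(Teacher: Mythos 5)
Your proof is correct, but note that the paper itself never proves this claim: it is introduced as ``the following elementary fact'' and used without justification, so there is no paper proof to compare against. Your argument --- identifying a linear dependence among the lifted columns $(\ve{a_i}^T,1)^T$ of $B$ with an affine dependence among the columns $\ve{a_i}$ of $A$, then applying this subset-wise to get $\operatorname{rank} B = \dim \aff\{\ve{a_1},\dotsc,\ve{a_n}\} + 1$ --- is the standard lifting argument one would expect the authors had in mind, and it cleanly establishes both parts of the claim.
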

\fi


\begin{lemma}
ZVPMD reduces in strongly-polynomial time to DVS.
\end{lemma}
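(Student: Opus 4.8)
The plan is to reduce a general ZVPMD instance $P = \{\ve{p_1}, \dotsc, \ve{p_n}\}$ to an \emph{affinely independent} configuration that has the same membership status, after which a single DVS call settles the question, since $\ve{0} \in \conv P$ if and only if $d(\ve{0}, \conv P)^2 = 0$. The obstruction is that DVS only accepts simplices (affinely independent point sets), so the whole reduction is organized around stripping $P$ down to affine independence without changing whether $\ve{0} \in \conv P$. First I would dispose of the affine structure with strongly-polynomial linear algebra via \cref{claim:affine}: appending a row of ones and row-reducing lets me test whether $\ve{0} \in \aff P$ (if not, output NO), compute the affine dimension $r$, and extract a maximal affinely independent subset. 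An affine change of coordinates carrying $\aff P$ onto $\RR^r$ and fixing the origin (legitimate precisely because $\ve{0} \in \aff P$) lets me assume $P$ affinely spans $\RR^r$. If $n = r+1$ the points are already affinely independent, $\conv P$ is a simplex, and one DVS call finishes.

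The heart of the matter is the case $n > r+1$, where $P$ is affinely dependent. Here I would delete points one at a time, preserving the answer, justified by a Carathéodory/minimality fact: if $P$ is affinely dependent and $\ve{0} \in \conv P$, then Carathéodory produces an affinely independent $S \subsetneq P$ with $\ve{0} \in \conv S$, so some single-point deletion $P \setminus \{\ve{p_i}\}$ still contains $\ve{0}$. Contrapositively, if \emph{every} single deletion loses $\ve{0}$, then $\ve{0} \notin \conv P$. Thus the loop is: find a point $\ve{p_i}$ with $\ve{0} \in \conv(P \setminus \{\ve{p_i}\})$ and delete it (safe, since deleting only shrinks the hull and we delete only when $\ve{0}$ survives); if no such point exists, output NO; once the set is affinely independent, call DVS. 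This mirrors the redundancy-removal structure of the preceding ZVPM-to-ZVPMD reduction, but with the crucial difference that membership can no longer be tested by a single black-box call.

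The main obstacle is making this strongly polynomial, because each redundancy test ``$\ve{0} \in \conv(P \setminus \{\ve{p_i}\})$'' is itself a non-simplex membership question, so a naive recursion branches and blows up. To stay DVS-only and polynomial I would implement each membership test by maintaining a maximal affinely independent subset $B$ (a simplex of $r+1$ points) and recovering its \emph{exact} minimum-norm point $\ve{x}^*$ from distance queries: calling DVS on $B$ and on each face $B \setminus \{\ve{b}\}$ identifies the active face (the vertices whose removal strictly increases the squared distance), and then $\ve{x}^*$ is the solution of a small linear system. If $\ve{x}^* = \ve{0}$ I report membership; otherwise I apply Wolfe's criterion (\cref{wolfec}) with $\ve{x}^*$ and scan $P$ once: if no point satisfies $\ve{p}^T \ve{x}^* < \enorms{\ve{x}^*}$, then $\ve{x}^*$ is the minimum-norm point of all of $\conv P$, it is nonzero, and I certify non-membership; otherwise a point is improving and $B$ must be updated.

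The step I expect to be hardest is bounding the number of such update rounds by a polynomial, since the unguarded version of this update is exactly Wolfe's method, which the rest of this paper shows can run for exponentially many corrals. I would force termination by ensuring each round passes to a strictly lower-dimensional subproblem---for instance by intersecting with the separating hyperplane through the origin determined by $\ve{x}^*$, or by permanently fixing one more vertex of the active face---so that at most $r$ rounds occur, the total number of DVS calls is $O(n\,r)$, and the bit-sizes of all intermediate rationals (controlled by the fixed linear systems and \cref{claim:affine}) remain polynomially bounded, yielding the claimed strongly-polynomial reduction.
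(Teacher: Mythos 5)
Your outer loop is logically sound: when $n > r+1$ and $\ve{0} \in \conv P$, Carath\'eodory guarantees that some single deletion preserves membership, so repeated safe deletions down to an affinely independent set, followed by one DVS call, would be correct. The genuine gap is in how you implement the test ``$\ve{0} \in \conv(P\setminus\{\ve{p_i}\})$''. That test is itself a ZVPMD instance on a non-simplex point set---exactly the problem you are trying to reduce---so your reduction is circular unless you can answer it using DVS calls only, in strongly polynomial time. Your proposed implementation (maintain an affinely independent $B$, recover its minimum-norm point $\ve{x}^*$ from DVS queries, test Wolfe's criterion \cref{wolfec}, swap in an improving point, repeat) is, as you yourself note, essentially Wolfe's method; the main theorem of this paper is precisely that such a scheme can visit exponentially many corrals, so ``bounding the number of update rounds'' is not a technical loose end but the entire difficulty. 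The fixes you sketch do not close it: intersecting with a hyperplane through the origin destroys the V-representation (the vertices of $\conv P \cap H$ are not among the input points but are edge--hyperplane crossings, whose number can grow at each round and which must themselves be computed), and ``permanently fixing one more vertex of the active face'' comes with no argument that the fixed vertex lies in a corral certifying the true answer---Wolfe's method routinely discards points of the final optimal corral along the way, as the example in \cref{sec:explowerbound} shows.

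The paper avoids all of this with a non-combinatorial idea: rather than searching for a simplex \emph{inside} $P$ whose hull still captures membership, it lifts \emph{all} $n$ points to a simplex in higher dimension. Writing the points as columns of a matrix $A$, it appends new coordinates equal to $\eps$ times an unnormalized Gram--Schmidt (hence rational) orthogonal basis of the orthogonal complement of the span of the rows of $A$ together with the all-ones vector; by \cref{claim:affine} the lifted points are affinely independent, so a single DVS call applies. Correctness is quantitative instead of exact: the lift increases the squared distance to the origin by at most $\eps^2 n$, while \cref{lem:mindist} shows $d(\ve{0},Q)^2 \geq \frac{1}{d (dT)^{2d}}$ whenever $\ve{0} \notin Q$; choosing $\eps = \frac{1}{nd(dT)^d}$ makes the perturbation strictly smaller than this gap, so thresholding the one DVS answer decides membership. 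That perturbation-plus-separation-bound mechanism is the ingredient your approach is missing.
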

\begin{proof}
Clearly ZVPMD reduces in strongly-polynomial time to DVP: Output YES if the distance is 0, output NO otherwise.


Given an instance of distance to a V-polytope, $\ve{p_1}, \ve{p_2}, \dotsc, \ve{p_{n}}$, we reduce it to an instance of DVS as follows:
We lift the points to an affinely independent set in higher dimension, a simplex, by adding small-valued new coordinates.
\Cref{claim:affine} allows us to handle affine independence in matrix form.
Let $A$ be the $d \times n$ matrix having columns $(\ve{p_i})_{i=1}^n$. Let $\ve{v_1}, \dotsc, \ve{v_d}$ be the rows of $A$. 
Let $\ve{v_0} \in \RR^n$ be the all-ones vector.
We want to add vectors $\ve{v_{d+1}}, \dotsc, \ve{v_{d+t}}$, for some $t$, so that $\ve{v_0}, \dotsc, \ve{v_{d+t}}$ is of rank $n$.
To this end, we construct an orthogonal basis (but not normalized, to preserve rationality) of the orthogonal complement of $\linspan{\ve{v_0}, \dotsc, \ve{v_d}}$. 
The basis is obtained by applying the Gram-Schmidt orthogonalization procedure (without the normalization step) to the sequence $\ve{v_0}, \dotsc, \ve{v_d}, \ve{e_1}, \dotsc, \ve{e_n}$. 
Denote $\ve{v_{d+1}}, \dotsc, \ve{v_{d+t}}$ the resulting orthogonal basis of the orthogonal complement of $\linspan{\ve{v_0}, \dotsc, \ve{v_d}}$. 
The matrix with rows $\ve{v_0}, \dotsc, \ve{v_d}, \ve{v_{d+1}}, \dotsc, \ve{v_{d+t}}$ is of rank $n$ and so is the matrix with rows
\[
\ve{v_0}, \dotsc, \ve{v_d}, \eps \ve{v_{d+1}}, \dotsc, \eps \ve{v_{d+t}}
\]
for any $\eps > 0$ (to be fixed later). Therefore, the $n$ columns of this matrix are linearly independent.
Let $B$ be the matrix with rows
\[
\ve{v_1}, \dotsc, \ve{v_d}, \eps \ve{v_{d+1}}, \dotsc, \eps \ve{v_{d+t}}.
\]
Let $\ve{w_1}, \dotsc, \ve{w_n}$ be the columns of $B$. 
By construction and \cref{claim:affine} they are affinely independent.
Let $S$ denote the convex hull of these $(n-1)$-dimensional rational points.
Polytope $S$ is a simplex.
Moreover, if $Q := \conv \{ \ve{p_1}, \dotsc, \ve{p_{n}}\}$, then
\[
d(\ve{0},S)^2 
\leq d(\ve{0},Q)^2 + \eps^2 \sum_{i=d+1}^{d+t} \enorms{\ve{v_{i}}} 
\leq d(\ve{0},Q)^2 + \eps^2 n
\]
(where we use that $\enorm{\ve{v_{i}}} \leq 1$, from the Gram-Schmidt construction).

The reduction proceeds as follows:
Let $T$ be the maximum of the absolute values of all numerators and denominators of entries in $(\ve{p_i})_{i=1}^n$ (which can be computed in strongly polynomial time\footnote{Equivalently, without loss of generality we can assume that the input is integral, and then take $C$ to be the maximum of the absolute values of all entries in $(\ve{p_i})_{i=1}^n$, as done in Schrijver's \cite[Section 15.2]{schrijver98}.}).
From \cref{lem:mindist}, we have $d(\ve{0},Q)^2 \geq \frac{1}{d (dT)^{2d}}$ if $\ve{0} \notin Q$.
Compute rational $\eps > 0$ so that $\eps^2 n < \frac{1}{d (dT)^{2d}}$. 
For example, let $\eps := \frac{1}{nd (dT)^d}$.
The reduction queries $d(\ve{0},S)^2$ for $S$ constructed as above and given by the choice of $\eps$ we just made. It then outputs YES if $d(\ve{0},S)^2 < \frac{1}{d (dT)^{2d}}$ and NO otherwise.
%
\end{proof}

\begin{lemma}\label{lem:mindist}
Let $P = \conv \{\ve{p_1}, \dotsc, \ve{p_n} \}$ be a V-polytope with $\ve{p_i} \in \QQ^d$. 
Let $T$ be the maximum of the absolute values of all numerators and denominators of entries in $(\ve{p_i})_{i=1}^n$.
If $\ve 0 \notin P$ then $d(\ve 0, P) \geq \frac{1}{ (dT)^{d} \sqrt{d}}$.
\end{lemma}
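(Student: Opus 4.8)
The plan is to locate the minimum norm point $\ve{x}^*$ on a low-dimensional face of $P$, realize it as the orthogonal projection of $\ve{0}$ onto the affine hull of a \emph{small, linearly independent} subset of the $\ve{p_i}$, and then lower bound $\enorm{\ve{x}^*}$ by a determinant computation. Following the footnote and Schrijver \cite[Section 15.2]{schrijver98}, I would first clear denominators and assume without loss of generality that the points are integral, so that $T$ bounds the absolute value of every coordinate. Let $\ve{x}^*$ be the unique nearest point of $P$ to the origin, and let $F$ be the minimal face of $P$ whose relative interior contains $\ve{x}^*$. Since $\ve{0} \notin P$ we have $\dim F \le d-1$, and because $\ve{x}^*$ minimizes the norm over all of $P \supseteq F$ while lying in the relative interior of $F$, it is in fact the minimum norm point of the whole affine hull $\aff F$. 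Picking an affinely independent subset $S = \{\ve{q_1}, \dotsc, \ve{q_k}\} \subseteq P$ with $\aff S = \aff F$ gives $k = \dim F + 1 \le d$, and by \cref{lem:affineoptimality} the point $\ve{x}^*$ is the affine minimizer of $S$. Since $\ve{0} \notin P$ forces $\ve{x}^* \neq \ve{0}$, the origin does not lie in $\aff S$, which together with affine independence makes $S$ \emph{linearly} independent.

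Next I would invoke the classical formula for the distance from the origin to $\aff S$. Let $A$ be the $d \times k$ integer matrix whose columns are $\ve{q_1}, \dotsc, \ve{q_k}$, let $G = A^T A$ be their Gram matrix, and let $M$ be the $(k-1)\times(k-1)$ Gram matrix of the edge vectors $\ve{q_l} - \ve{q_1}$ for $l = 2, \dotsc, k$. Comparing the two standard expressions for the $k$-dimensional volume of the simplex with vertices $\ve{0}, \ve{q_1}, \dotsc, \ve{q_k}$, namely $\tfrac{1}{k!}\sqrt{\det G}$ and $\tfrac{1}{k}\cdot\tfrac{1}{(k-1)!}\sqrt{\det M}\cdot\enorm{\ve{x}^*}$ (in the second, $\enorm{\ve{x}^*}$ is the height of the apex $\ve{0}$ over the base $\conv S$, since $\ve{x}^*$ is the foot of the perpendicular), yields the identity
\[
\enorms{\ve{x}^*} = \frac{\det G}{\det M}.
\]
The same identity also follows directly from the Lagrange-multiplier characterization $\enorms{\ve{x}^*} = 1/(\ve{1}^T G^{-1} \ve{1})$ of the affine minimizer, if one prefers to avoid the volume computation.

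Finally, I would bound the two determinants. Because $S$ is linearly independent and integral, $G$ is a positive-definite integer matrix, so $\det G \ge 1$. For the denominator, Hadamard's inequality gives $\det M \le \prod_{l=2}^k \enorms{\ve{q_l} - \ve{q_1}} \le (4 d T^2)^{k-1}$, since each edge vector has squared norm at most $d(2T)^2$. Combining these, $\enorms{\ve{x}^*} \ge (4dT^2)^{-(k-1)}$, hence $\enorm{\ve{x}^*} \ge (2T\sqrt d)^{-(k-1)} \ge (2T\sqrt d)^{-(d-1)}$ using $k \le d$ and $2T\sqrt d \ge 1$. A routine comparison of exponents shows $(2T\sqrt d)^{d-1} \le (dT)^d \sqrt d$ for all $d \ge 1$ and $T \ge 1$, giving the claimed bound $d(\ve{0},P) = \enorm{\ve{x}^*} \ge \frac{1}{(dT)^d \sqrt d}$. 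The main obstacle I anticipate is the structural step: arguing cleanly that $\ve{x}^*$ is the affine minimizer of an affinely independent subset of at most $d$ points, so that the Gram matrices stay $d \times d$ and the integrality bound $\det G \ge 1$ applies; the determinant estimates are then routine once the dimension of the relevant simplex is controlled. The reduction to the integral case also requires the modest care indicated in the footnote to ensure $T$ is tracked correctly under clearing of denominators.
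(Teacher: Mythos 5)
Your approach is genuinely different from the paper's, and for \emph{integer} input points it is correct and arguably more self-contained. The paper works on the H-side: it invokes a vertex-to-facet complexity bound to write every inequality of an H-description of $P$ as $\ve{a}^T\ve{x}\le k$ with integer entries of absolute value below $(dT)^d$, observes that $\ve{0}\notin P$ forces some inequality to be violated (so $|k|\ge 1$), and concludes $d(\ve{0},P)\ge |k|/\enorm{\ve{a}} \ge \frac{1}{(dT)^d\sqrt d}$. You stay entirely on the V-side: the nearest point $\ve{x}^*$ is the affine minimizer of a linearly independent subset $S$ of at most $d$ input points, $\enorms{\ve{x}^*}=\det G/\det M$, integrality gives $\det G\ge 1$, and Hadamard bounds $\det M$. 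Each step of that chain checks out: relative-interior minimality over the face does upgrade to minimality over its affine hull by strict convexity, $\ve{0}\notin\aff S$ together with affine independence does give linear independence, the volume identity is right, and the closing exponent comparison $(2T\sqrt d)^{d-1}\le (dT)^d\sqrt d$ holds for $T,d\ge 1$. So when the $\ve{p_i}$ have integer entries bounded by $T$, your proof is complete and avoids the external facet-complexity citation.

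The genuine gap is the step you wave off as ``modest care'': the reduction of the rational case to the integer case. Clearing denominators means multiplying \emph{all} points by one common denominator $D$; the resulting integer entries are bounded by $TD$, not $T$, and the distance scales by $D$, so unwinding your bound gives only $d(\ve{0},P)\ge D^{-1}(2TD\sqrt d)^{-(d-1)}$, weaker than the claimed bound by roughly a factor $D^{d}$, where $D$ can be exponentially larger than $T$. Worse, no bookkeeping can repair this, because the lemma as stated, with $T$ the maximum over numerators \emph{and} denominators, is false for genuinely rational data: take $d=2$, $\ve{p_1}=(1/5,\,1/4)$, $\ve{p_2}=(-1/4,\,-1/3)$, so $T=5$ and $\ve{0}\notin P$. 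The origin projects into the relative interior of the segment, and $d(\ve{0},P)=\frac{1/240}{\enorm{\ve{p_1}-\ve{p_2}}}\approx 0.00566$, which is strictly smaller than $\frac{1}{(dT)^d\sqrt d}=\frac{1}{100\sqrt 2}\approx 0.00707$. (To be fair, the paper's own proof has the same defect: in this example the minimal integer equation of $\aff\{\ve{p_1},\ve{p_2}\}$ is $140x_1-108x_2=1$, whose coefficients exceed $(dT)^d=100$, so the cited facet bound is likewise an integer-input statement.) The honest conclusion is that your argument proves the integer version of the lemma, which is in fact all the surrounding reduction to DVS needs, provided the threshold $\eps$ there is computed from the rescaled integer data rather than from the original $T$; but the blanket ``without loss of generality the points are integral'' is not a valid step toward the statement as written.
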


\begin{proof}
The claim is clearly true if $P$ is empty.
If $P$ is non-empty, let $\ve{y} = \proj_P(\ve{x})$.
We have that every facet of $P$ can be written as $\ve a^T \ve x \leq k$, where $\ve a (\neq 0)$ is an integral vector, $k$ is an integer and the absolute values of the entries of $\ve a$ as well as $k$ are less than $(dT)^d$ (\cite[Theorem 3.6]{MR625550}).
By assumption at least one these facet inequalities is violated by 0. 
Denote by $\ve a^T \ve x \leq k$ one such inequality.
Let $H = \{\ve{x} \suchthat \ve a^T \ve x = k \}$. 
We have $\enorm{\ve y} = d(0,P) \geq d(0,H)$, and $d(0,H)^2 = k^2/\enorm{\ve a}^2 \geq \frac{1}{d (dT)^{2d}}$.
The claim follows.
\end{proof} 

\fi 

\section{Conclusions and open questions}

We have seen that Wolfe's method using a natural point insertion rule exhibits exponential behavior. We have also shown
that the minimum norm point problem for simplices is intimately related to the complexity of linear programming. Our work raises several very natural questions:

\begin{itemize}
    \item Are there exponential examples for other insertion rules for Wolfe's method? Also, at the moment, the ordering of
    the points starts with the closest point to the origin, but one could also consider a randomized initial rule or a randomized insertion rule.
    
    \item For applications in submodular function minimization, the polytopes one considers are base polytopes and our
    exponential example is not of this kind. Could there be hope that for base polytopes Wolfe's method performs better?
    
    \item It would be interesting to understand the average performance of Wolfe's method. 
    How does it behave for random data? Further randomized analysis of this method would include the smoothed analysis of Wolfe's method or at least the behavior for data following a prescribed distribution. 
    
    \item We have seen that it is already quite interesting to study the minimum norm point problem for simplices, when we discussed the connection with linear programming. Is there a family of simplices where Wolfe's method takes exponential time? 
    
    \item Can Wolfe's method be extended to other convex $L_p$ norms for $p>1$?
\end{itemize}

\subsection*{Acknowledgements}
We thank Gilberto Calvillo, Deeparnab Chakrabarty, Antoine Deza, Stephanie Jegelka, Matthias K\"oppe, Tamon Stephen, and John Sullivan for useful suggestions, conversations and comments about this project. This work was partially supported by NSF grant DMS-1440140, while the first and second authors were in residence at the Mathematical Sciences Research Institute in Berkeley, California, during the Fall 2017 semester. The first and second author were also partially supported by NSF grant DMS-1522158. The second author was also partially supported by the University of California, Davis Dissertation Fellowship. The third author was also partially supported by NSF grants CCF-1657939 and CCF-1422830 while the third author was in residence at the Simons Institute for the Theory of Computing. 

\ifnum\version=\stocversion
    \newpage
\fi

\bibliographystyle{plain}
\bibliography{bib}

\end{document}